\DeclareMathAlphabet{\mathpzc}{OT1}{pzc}{m}{it}
\newtheorem{te}{Theorem}[section]
\newtheorem{defin}[te]{Definition}
\newtheorem{os}[te]{Remark}
\newtheorem{prop}[te]{Proposition}
\newtheorem{lem}[te]{Lemma}
\numberwithin{equation}{section}
\def \l { \left( }
\def \r {\right) }
\def \ll { \left\lbrace }
\def \rr { \right\rbrace }
\def \lq {\left[ }
\def \rq {\right] }
\def \lg {\left\{ }
\def \rg {\right\} }
\def \E {\mathbb{E}}
\def \P {\mathbb{P}}
\def \N {\mathbb{N}}
\def \R {\mathbb{R}}
\def \ef {\mathcal{N}^{\nu}} 
\def \Lam {\mathcal{L}}  
\def \Dom {\text{Dom}}
\def \Xcl { X_{(2)}^{\nu}(t) }
\def \Vcl { V_{(2)}^{\nu}(t) }
\def \Xl { X_{(1)}^{\nu}(t) }
\def \Vl { V_{(1)}^{\nu}(t) }
\def \Xt  { \mathcal{X}_t^{(1)} }
\def \Vt  { \mathcal{V}_t^{(1)} }
\def \X { \mathcal{X} }
\def \V { \mathcal{V} }
\def \sgM { \mathcal{T} } 
\def \opmoduno { \mathcal{Z} } 
\def \opmoddue { \widetilde{\mathcal{Q}} } 
\def \sub { \mathcal{U} } 
\def \sgL { \mathcal{Q} } 
\newtheorem*{assumption*}{\assumptionnumber}
\providecommand{\assumptionnumber}{}
\newenvironment{assumption}[2]
{%
	\renewcommand{\assumptionnumber}{(\textbf{#1#2})}%
	\begin{assumption*}%
		\protected@edef\@currentlabel{(\textbf{#1#2})}%
	}
	{%
	\end{assumption*}
}
\newcounter{mylabelcounter}
\newcommand{\labelText}[2]{%
#1\refstepcounter{mylabelcounter}%
\immediate\write\@auxout{%
  \string\newlabel{#2}{{1}{\thepage}{{\unexpanded{#1}}}{mylabelcounter.\number\value{mylabelcounter}}{}}%
}%
}
\begin{document}

\large



\title[]{Random Flights and Anomalous Diffusion:  \\ A Non-Markovian Take on Lorentz Processes}

\author[]{Lorenzo Facciaroni}
 \author[]{Costantino Ricciuti}
\author[] {Enrico Scalas}
\author[]{Bruno Toaldo}

\keywords{Lorentz process; Boltzmann-Grad limit; Feller semigroups; fractional equations; anomalous diffusion; diffusive limit}
	\date{\today}
    \subjclass[2020]{60K37; 82C40; 60K50; 60K40}
\thanks{The authors acknowledge financial support under the National Recovery and Resilience Plan (NRRP), Mission 4, Component 2, Investment 1.1, Call for tender No. 104 published on 2.2.2022 by the Italian Ministry of University and Research (MUR), funded by the European Union – NextGenerationEU– Project Title “Non–Markovian Dynamics and Non-local Equations” – 202277N5H9 - CUP: D53D23005670006 - Grant Assignment Decree No. 973 adopted on June 30, 2023, by the Italian Ministry of University and Research (MUR)}

\thanks{The author B. Toaldo would like to thank the Isaac Newton Institute for Mathematical Sciences, Cambridge, for support and hospitality during the programme Stochastic systems for anomalous diffusion, where work on this paper was undertaken. This work was supported by EPSRC grant EP/Z000580/1.}

\thanks{E. Scalas also acknowledges financial support under the 000317\_24\_RICERCA\_UNIV\_2023\_PROG\_MEDI\_SCALAS - RICERCA ATENEO 2023 - SCALAS PROGETTI MEDI.
 The title of the project is "Approximation of stochastic processes by means of sums of random telegraph processes"}

 \thanks{The authors are partially supported by Gruppo Nazionale per l’Analisi Matematica, la Probabilità e le loro Applicazioni (GNAMPA-INdAM)}

\begin{abstract}
We  study Lorentz processes in two different settings. Both cases are characterized by infinite expectation of the free-flight times, contrary to what happens in the classical Gallavotti-Spohn models. Under a suitable Boltzmann-Grad type scaling limit, they converge to non-Markovian random-flight processes. A further scaling limit  yields another non-Markovian process, i.e., a superdiffusion  obtained by a suitable time-change of Brownian motion. Furthermore, we obtain the governing equations for our random flights and anomalous diffusion, which  represent a non-local counterpart for the linear-Boltzmann and diffusion equations. It turns out that these  have the form of fractional kinetic equations in both time and space. To prove such results, we develop a technique based on averaging Feller semigroups. 
\end{abstract}

\maketitle
\tableofcontents

\section{Introduction}

\subsection{Context and vision}
A Lorentz process is a model for the motion of a particle among randomly located scatterers \textcolor{black}{also known as obstacles}. It was \textcolor{black}{orginally} used to describe the transport of electrons through a conductor (see \cite{Lorentz}).
This process is difficult to handle, mostly due to the fact that the particle may collide with the same scatterer more than once, \textcolor{black}{and this} produces a memory effect. 
So, one can \textcolor{black}{try and} approximate this process \textcolor{black}{using} more tractable processes. 

The first to tackle this problem rigorously was Gallavotti \cite{Gallavotti}.  In his model, of which we here consider the 3-dimensional version, the scatterers are hard spheres of radius $R$. Their centers are randomly distributed in space according to a Poisson point process with intensity $\rho$, and the particle interacts with the scatterers by means of elastic collisions.  Each free flight time has a marginal exponential distribution, hence it is unlikely that long flights occur.

Intuitively, the memory effect  due to recollisions tends to vanish if $R\to 0$. Indeed, Gallavotti studied this system in the Boltzmann-Grad\textcolor{black}{, also known as low density,} limit: this is the limit in which $R\to 0$ and simultaneously $\rho \to \infty$, in such a way that the free flight time of the particle maintains a finite expectation. Under this limit, the Lorentz process converges to a Markovian random flight process, having independent, exponentially distributed free flight times. \textcolor{black}{Moreover}, the position-velocity process is governed by the \textcolor{black}{following} linear Boltzmann equation
\begin{align}
\frac{\partial}{\partial t} f(x,v,t)= -cv\cdot \nabla _x f(x,v,t)+ \lambda \int _{S^2} (f(x,v',t)-f(x,v,t) ) \mu (dv')
\label{eq. Boltzmann introduzione}
\end{align}
where $\mu$ denotes the uniform distribution of the unit sphere $S^2$, $c$ is the speed of the particle, $\lambda$ is the mean number of direction changes per unit time, \textcolor{black}{and $f(x,v,t)$ is the joint probability density of $X$ (position) and $V$ (velocity) at time $t$}.
After the Boltzmann-Grad limit, a further scaling limit  ($c\to \infty$, $\lambda \to \infty$ such that $c^2/\lambda \to 1$) leads to a diffusion process \textcolor{black}{governed} by the heat equation
\begin{align}
\frac{\partial}{\partial t}p(x,t)= \frac{1}{2} \Delta _x p(x,t),
     \label{eq. diffusione introduzione}
\end{align}
\textcolor{black}{where $p(x,t)$ denotes the probability density of $X$ at time $t$}.
Such a diffusion limit is based on \textcolor{black}{central-limit-theorem} arguments, \textcolor{black}{given that} the free flight times have finite mean and variance.
This model was generalized by Spohn \cite{Spohn}, where more general probability distributions for the scatterers are allowed \textcolor{black}{(here the scatterers are  potentials with finite range $R$), and} weak convergence in path space is proved. Other developments have been made, e.g.,  in \cite{Basile1, Basile2, Boldrighini, Golse}.
A further progress is due to \cite{Toth1, Toth2}, where the Boltzmann-Grad and the diffusion limit are taken simultaneously.

\textcolor{black}{
We stress that all the above mentioned models have a feature in common with the original Gallavotti's work: the limiting processes are Markovian and non-Markovian limits are not considered. }

\textcolor{black}The study of Lorentz models with  non-Markovian scaling limits is a difficult challenge. One of the main problem to address is the derivation of the governing equations, which, in general, are not expected to be differential equations of the first order in time, \textcolor{black}{such as} \eqref{eq. Boltzmann introduzione} and \eqref{eq. diffusione introduzione}. Indeed, according to Feller's theory, Markov processes are governed by their \textcolor{black}{first-order-time-derivative infinitesimal} generator equations, \textcolor{black}{whereas} non-Markov processes are generally \textcolor{black}{related} to non-local  equations. 

Recently, some \textcolor{black}{Lorentz models} with non-Markovian limits have appeared in the literature. In these models, the gas is immersed in an external force field. It is because of this  force field that the position-velocity process is \textcolor{black}{no longer} Markovian after the kinetic limit. \textcolor{black}{The interested reader can e.g. see} \cite{DesvillettesRicci}. \textcolor{black}{One can also consult}
\cite{NotaSaffirio, NotaSaffirioSimonella} and all the references therein, where the authors consider  a bi-dimensional Lorentz gas under the action of a constant magnetic field. \textcolor{black}{In those paper,} a generalization of the Boltzmann equation, which is non-local in the time variable, appears (see e.g. formula (1.3) in \cite{NotaSaffirioSimonella}); also  a non-local heat equation where the diffusion coefficient is replaced by an operator \textcolor{black}{written} in terms of the Green-Kubo formula (see formula (1.8) in \cite{NotaSaffirio}). 

\textcolor{black}{We want to study the aspects of non-Markovianity} from a different perspective, without  \textcolor{black}{applying} external forces. \textcolor{black}{We summarize our research plan} as follows. 
\vspace{0.1cm}

\textcolor{black}{
\textit{We are interested in  Lorentz models with non-Markovian scaling limits, where the origin of non-Markovianity just lies in the probability distribution of the obstacle centers.}}
\vspace{0.1cm}

As we shall see, our paper contributes to this \textcolor{black}{goal} by studying a particular \textcolor{black}{obstacle} distribution, \textcolor{black}{giving} rise to a special class of non-Markov processes. Broader classes are of \textcolor{black}{great} interest, but we defer \textcolor{black}{their} study to future work.

Our work is connected to Spohn's results (see \cite{Spohn} and also Section 4 in \cite{Spohn_review}). In particular, Spohn presented necessary and sufficient conditions to be obeyed by the obstacle distribution so that the limit random flight is Markovian. Informally speaking, these conditions mean that, in the low density limit, the rescaled number of obstacles centres in any Borel set converges in probability to a constant, and, consistently, the rescaled number of obstacles centres lying in disjoint sets are independent. Our obstacle distribution does not meet Spohn's conditions (see the discussion in Section \ref{Sec: The Boltzmann-Grad limit modello 1}) and a non-Markovian limit emerges.

\textcolor{black}{From the  physical and applicative point of view,  our work is motivated by two points:}

\begin{enumerate}
    \item In the existing literature, the Lorentz processes among random obstacles  are approximated by diffusion processes. Limiting anomalous diffusions (in the sense explained later) are not considered.

\item Under the kinetic limits presented in the existing models, the expected value of the free flight times remains finite (either  strictly positive or zero).
Cases in which the flight times have  infinite expectation are  excluded (we here refer to  flight times that are  almost surely finite, i.e. they have a proper probability distribution, but their expected value is infinite). 
\end{enumerate}

On point (1), we stress that applied scientists show up more and more frequently that classical models (based on Brownian motion, It\^o diffusions and Markov processes) are too regular and not suitable to cover stochastic systems exhibiting complicated features alterating the diffusive behaviors. In these cases the mean square displacement of the observed variable $X(t)$, i.e., the quantity $\mathds{E} \left\| X(t) -X(0)\right\|^2$, where $X(t)$ denote the position of a particle at time $t$, spreads faster or slower than linearly, as $t \to +\infty$; the linear behavior is, indeed, the typical “diffusive” one, related to Brownian motion and the heat equation. These deviations arise in several contexts. The earliest signs of anomalous diffusion were documented by Richardson in his landmark 1926 study on turbulence \citep{richardson1926}. Later, more systematic theoretical approaches appeared in the 1970s, especially through Scher and Montroll’s investigation of charge transport in amorphous semiconductors using the continuous time random walk (CTRW) model \citep{scher1975}.
Subsequently, experimental evidence for anomalous diffusion has been recorded across a variety of settings, including porous materials, polymer chains, cellular systems, turbulent flows, and biological contexts \citep{barkai2000, golding2006, metzler_fd, sokolov2005}. Subdiffusive behavior is commonly observed in polymer dynamics, protein transport, and NMR experiments involving disordered media, while superdiffusive dynamics are typically associated with Lévy statistics found in turbulent and biological systems \citep{bouchaud1990, weiss1994, zumofen1993}.
A very popular approach for modeling these anomalous diffusions is based on CTRWs, L\'evy walks and L\'evy flights (see, e.g., \cite{balescu2005, bardou2001, havlin1987, isichenko1992, metzler_fd, metzler2004, montrollshlesinger1984} and references therein). L\'evy walks, in particular, are special random flights (with independent flight times); see \cite{zaburdaev} for a review.

However, it turns out that a unifying theory connecting anomalous diffusions and mechanical  processes of Lorentz type (with deterministic Hamiltonian and random obstacles) is still not available. A pioneering contribution in this context is \cite{toth_super}, where a connection is established for the periodic Lorentz gas when the obstacles' position is deterministic.
Note that for further crucial results on the periodic Lorentz gas, the reader can also consult \cite{golse1, golse3, golse2, anmath}. We also refer to
\cite{Marklof} where the authors use a very general approach, with a large class of deterministic scatterer configurations giving rise to non-Markovian limits.

As far as point (2) is concerned, we stress that cases with infinite mean flight times are \textcolor{black}{important} from a physical point of view, since they are just connected to a class of the above mentioned anomalous diffusion models. On this point there  is a wide literature concerning CTRWs and random flights where heavy-tailed waiting times belong to the domain of attraction of a $\nu$-stable law, $\nu \in (0, 1)$.  See, e.g. \cite{MeerAnnals, MeerJap, MeerPhysRevE, MeerSpa, strakaSpa} for continuous-time random walks and \cite{RicciutiToaldo} for a recent random flight model. These processes and their  anomalous diffusive limits fall \textcolor{black}{within} the theory of semi-Markov processes \cite{meerstraannals} and they are crucial \textcolor{black}{to understand and model} anomalous diffusive phenomena related to fractional dynamics (see, e.g., \cite{metzler_fd} and references therein). Random flights with flight times having diverging moments have been extensively considered in the physics literature: as previously mentioned, these processes are called L\'evy walks \cite{zaburdaev} and they have plenty of applications \textcolor{black}{to} modeling anomalous phenomena: e.g., in the context of interacting particles \cite{fedotov1}, intracellular transport \cite{fedotov2}, blinking nanocrystals \cite{barkai1, barkai2}, classical chaos and nonlinear hydrodynamics \cite{klafter1996, Shlesinger1999} and many others (see \cite{klaftersokolov, zaburdaev} and references therein).

One might ask, therefore, whether random walks/flights with infinite mean flight times and the related anomalous diffusions can be connected to mechanical models of Lorentz type.

\subsection{Summary of main results}
In this paper we present two Lorentz process models. For both of them, the limit random flights and their related anomalous diffusion, together with their governing equations, are new. Such processes go beyond the semi-Markov setting and a useful tool in order to deal with them is the recent theory of the so-called para-Markov processes, \textcolor{black}{based on exchangeability} (see \cite{Facciaroni}).

The first model, discussed in Section \ref{Sec: motion among random obstacles (model 1)}, is based on a distribution of obstacles that we call Mittag-Leffler point process. The reason \textcolor{black}{for} this name is that the free flight times have a so-called  Mittag-Leffler distribution,  a heavy tailed distribution belonging to the domain of attraction of a stable law, with interesting analytical properties; its density function has a power law decay $\sim C t^{-\nu-1}$, where $\nu\in (0,1)$ and $C>0$, hence the expectation is infinite. 
In this setting it is likely that long flights occur, contrary to what happens for the Poissonian case. 
We study this system under a limit of Boltzmann-Grad type (see Theorem \ref{Theorem BG boltzmann grad}).

As discussed before, this distribution of obstacles does not satisfy Spohn's conditions and thus a non-Markovian limit is obtained. Moreover, in our model,  the free flight time of the particle has infinite mean both before and after the limit. 
The limiting non-Markovian random flight presents a long memory tail, in the sense that all the flight times are dependent; despite this, the process is  easy to handle, since it is possible to explicitly write the joint distribution of its flight times, which happens to be of Schur constant type. In particular, let $\{\opmoduno_t\}_{t\geq0}$ be the evolution operator associated to the position-velocity process. Then we prove the following Duhamel expansion (see Theorem \ref{Model 1: teorema su Duhamel})
\begin{align}
        \opmoduno_t h  = \, & \mathcal{M}_\nu (-\lambda ^\nu t^\nu)P_th  \notag \\ &+ \sum _{n=1}^\infty (-\lambda )^n \mathcal{M}_\nu ^{(n)}(-\lambda ^\nu t^\nu)  \int _{0< \tau _1< \tau _2< \dots < t}P_{t-\tau _n}L  P_{\tau _n-\tau _{n-1}}L \dots LP_{\tau _1}h\,  d\tau _1 \dots d\tau _n \label{thisthis}
    \end{align}
    where $L$ is the scattering operator, $P_t$ is the translation operator and $\mathcal{M}_\nu^{(n)}(\cdot)$ denotes the $n$-th derivative of the Mittag-Leffler function. Note that for $\nu=1$ formula \eqref{thisthis} reduces to the classical Duhamel expansion of the Boltzmann semigroup.

After the Boltzmann-Grad limit, a further scaling limit leads to an anomalous diffusion process, exhibiting self-similarity and also super-diffusive behaviour, i.e. the particles spread faster than for Brownian motion. We call such a process {\em the Mittag-Leffler anomalous-diffusion process}. Specifically, in Section \ref{Diffusion model 1}, we show that its density $q(x,t)$ is the fundamental solution of a fractional diffusion equation of the form
\begin{align*}
    \frac{\partial ^\nu}{\partial t ^\nu} q(x,t)= - \frac{1}{2^\nu} (-\Delta )^\nu q(x,t),
\end{align*}
where the non-local time derivative is defined in Equation \eqref{deffrac} and $ -  (-\Delta )^\nu$ is the fractional Laplacian. Nevertheless, being $c$ the speed of the random flight and $\lambda$ the parameter of each flight time,  under the scaling $c\to \infty,\ \lambda \to \infty,\ c^2/\lambda = 1 $ we get
\begin{align*}
    \lg X_{(1)}^{\nu}(t)\rg \overset {fdd}{\longrightarrow}\{W_t\},     
\end{align*}
where $\lg X_{(1)}^{\nu}(t)\rg_{t\geq0}$ is the position random position in the first model and $\lg W_t\rg_{t\geq0}$ is the Mittag-Leffler anomalous-diffusive process. The convergence is in the sense of finite dimensional distributions.

The same anomalous diffusion is obtained in the second model (studied in Section \ref{section model 2}), where the distribution of obstacles is Poissonian (as in \cite{Gallavotti}), but the speed of the particle is heavy tailed distributed.  
A Boltzmann-Grad limit leads to another non-Markovian random flight process with speed and flight times depending on each other. 
In particular, let $\lg \opmoddue_t \rg_{t\geq0}$ be the evolution operator associated to the position-velocity process. Then, in Theorem \ref{Model 2: teorema equazione Boltzmann frazionaria e uguaglianza in distribuzione}, we show that the map $t\longmapsto \opmoddue_t h$ is the unique solution of the fractional Boltzmann equation
\begin{align*}
    \frac{\partial ^\nu}{\partial   t^\nu} g(t) = -\bigg (-c\, v\cdot \nabla  -\lambda (L-\mathcal{I}) \bigg )^\nu g(t)  ,
\end{align*}
for $h$ satisfying appropriate conditions and for $g(0) = h$. The fractional power in the right hand is rigorously  defined in Equation \eqref{Isotrpoic: fractional Boltzmann operator modello 2}.
As said before, a second  scaling limit gives the same super-diffusion of the first model.

In order to rigorously get the above governing equations, we face a problem which is in itself interesting from a probabilistic and analytical point of view and serves as a tool, in this context. Let $\{T_t\} _{t\geq 0}$ be a Feller semigroup on a Banach space $\mathfrak{B}$, with generator $(G, \textrm{Dom}(G))$. Moreover,  consider the Bochner integral $Q_t= \int _0^\infty T_s \, l(ds)$, where $l(ds)$ denotes the so-called Lamperti distribution (see e.g. \cite{Lamperti_James}). This is  a distribution on $(0,\infty)$  with power law decay $t^{-\nu-1}$, $\nu \in (0,1)$. The family $\{Q_t\} _{t\geq 0}$ defines a mixture of Feller semigroups. We prove in Theorem \ref{teorema equazione} that, for any $h\in \mathrm{Dom} (G)$,  the function $t \mapsto Q_th$ solves the abstract Cauchy problem
\begin{align}
\frac{\partial ^{\nu}}{\partial t^{\nu}}g (t) = -(-G)^\nu g(t), \qquad g(0)=h, \label{equazione modello}
\end{align}
i.e. a fractional kinetic equation exhibiting fractional powers of operators. Such equation reduces to the well-known  governing equation of Markov processes for $\nu = 1$. 
So, we see  that this kind of fractional kinetic equations emerge after kinetic limits of our models. They generalize equations \eqref{eq. Boltzmann introduzione} and \eqref{eq. diffusione introduzione} by introducing the $\nu$-powers of operators.

\section{Preliminary notions} \label{Basic notions and preliminary results}

In the present paper we shall use results from very different theories, e.g., we shall combine tools from kinetic theory, the theory of non-Markov processes and non-local equations. Hence, for the reader convenience, we summarize the main facts in few pages.
\subsection{Isotropic transport processes}
In this paper we consider a class of transport processes, also called random flights, defined as follows.
Let a particle start at a point $x\in \mathbb{R}^d$ and move along a unit vector $v_0$ with random speed $\mathcal{C}$, for a random waiting time $J_1$.
After the time interval $J_1$, the particle undergoes scattering and continues its motion along a new unit vector $v_1$, again with speed $\mathcal{C}$, for another random waiting time $J_2$. This process repeats indefinitely.
Here $\mathcal{C}$ is assumed to be either a positive constant or a positive random variable.
The directions $\{v_i,\ i\geq0\}$ are assumed to be i.i.d. random vectors, with uniform law on the unit sphere
\begin{align*}
    S^{d-1}= \{ v\in \mathbb{R}^d: |v|_e^2=1\},
\end{align*}
where $|\cdot|_e$ denotes the Euclidean norm. 
Moreover, we assume that the unit vectors $\{v_i,\ i\geq0\}$ are independent of the waiting times $\lg J_n,\ n\geq1 \rg$ and of the speed $\mathcal{C}$. On the other hand, $\mathcal{C}$ and $\lg J_n,\ n\geq1 \rg$ may be dependent.
Let $\tau_n := \sum_{i = 1}^n J_i$ be the instant of the $n$-th change of direction for $n\geq1$, with the convention $\tau_0 : =0$,  and let $\mathcal{N}=\{\mathcal{N}_t,\ t\geq0\}$ be the process which counts the number of changes up to time $t$, i.e.
\begin{align*}
    \mathcal{N}_t := \max \{n \geq 0: \tau _n \leq t\}.
\end{align*}
At time $t\geq0$, the particle has unit velocity vector $V_t$ and is located at the position $X_t$.
To summarize the above description, we now give the rigorous definition of the joint process $(X_t, V_t)$.

\begin{defin} \label{Definizione isotropic transport process generica}
    Let $\l \Omega, \mathcal{A}, \mathds{P}^{(x,v)} \r,\ v\in S^{d-1},\ x\in\R^d$, be a family of probability spaces.
    Let $ \{J_n\}_{n \geq 1} $, $\{v_i\}_{i\geq0}$ and $\mathcal{C}$ be random variables as defined above. Let $\mathcal{N}$ be defined as above. 
    The joint process $ \lg \l X _t,  V _t\r ,\ t\geq0\rg$, such that
    \begin{align} \label{Isotropic: definizione velocità transp process}
        V _t &:= v_{\mathcal{N}_t} \\   
    X_t  &:= X_0 + \mathcal{C}\int_0^t V_s  ds 
     = X_0+ \mathcal{C} \sum _{i=1}^{\mathcal{N}_t} J_i v_{i-1}+ \mathcal{C}\ (t-\tau _{\mathcal{N}_t})\  v_{\mathcal{N}_t}, \label{Isotropic: definizione posizione transp process}
    \end{align}
     where $\mathds{P}^{x,v}(X_0= x ,v_0 = v)=1$, is said to be the isotropic transport process.
\end{defin}

In practice, defining such a transport process requires specifying only the joint law of the vectors $( \mathcal{C},  J_1, J_2, \cdots, J_n)$ for any $n$, from which the joint distribution for the flight times $\lg J_n,\ n \geq1  \rg$ and the distribution of the speed $\mathcal{C}$ directly follow.

In Section  \ref{Markovian random flight} we recall some known facts on the Markovian case, i.e., the case where the $J_n$s are i.i.d. exponential r.v.'s and $\mathcal{C}$ is a positive constant chosen independently on the $J_n$s. Then, in Sections \ref{Subsection: Random flight Model 1} and \ref{Subsection: Random flight Model 2}, we shall define two non-Markovian models, where the waiting times are stochastically dependent and have infinite mean. Furthermore, their limit behaviour will be studied in Sections \ref{Diffusion model 1} and \ref{Boltzmann-Grad approximation and anomalous diffusion model 2}.

\subsubsection{The Markovian random flight and its diffusive limit} \label{Markovian random flight}

Let the waiting times $\{J_n, n\geq 1\}$ be i.i.d. exponential random variables with mean $\lambda ^{-1}$ and let the speed $\mathcal{C}$ be a positive constant, say $\mathcal{C}=c$ almost surely. Then the resulting transport process is the Markovian isotropic transport process, as studied, for example, in \cite{kolesnik, Monin, Orsingher1, Stadje, Watanabe}; an analogous construction holds in non Euclidean spaces (see e.g. \cite{Orsingher2,pinsky}). In this setting, the counting process $\mathcal{N}$ is a Poisson process with intensity $\lambda$ and $\tau_n$ follows a Gamma distribution of parameters $n,\lambda$, also called Erlang distribution. Here the parameter $\lambda$ can be interpreted as the mean number of direction changes per unit time.

In this case, we shall denote the random flight process of Definition \ref{Definizione isotropic transport process generica} by 
\begin{align*}
   \lg \l X^{(c, \lambda)}_t, V^{(c, \lambda)}_t \r,\ t\geq 0 \rg ,
\end{align*}
where the superscript $(c,\lambda)$ indicates that the process depends on the two parameters $c$ and $\lambda$.
Here $\lg V^{(c, \lambda)}_t,\ t\geq0 \rg$ is    a  Markov process on $S^{d-1}$ and  $\lg \l X^ {(c, \lambda)}_t, V^{(c, \lambda)}_t \r,\ t\geq0 \rg$ is a Markov process on $\mathbb{R}^d \times S^{d-1}$. 

Let us consider initial data $(x,v)\in \R^d\times S^{d-1}$. 
Moreover, consider the family of operators $\lg \sgM_t^ {(c, \lambda)} \rg_{t\geq0}$ defined by
\begin{align}
    \sgM_t^ {(c, \lambda)}h (x,v) := \mathds{E}^{(x,v)}\lq h \l X^ {(c, \lambda)}_t, V^{(c, \lambda)}_t \r \rq \qquad h\in C_0\l \R^d \times S^{d-1}\r  \label{semigruppo Boltzmann}\end{align}
where $\mathds{E}^{(x,v)}$ denotes the expectation under $\mathds{P}^{x,v}$.
Such a family defines a strongly continuous contraction semigroup on the space $C_0 \l \R^d \times S^{d-1}\r $ endowed with the sup-norm $||\cdot||$, i.e. a Feller semigroup (see \cite[Lemma 2.1]{Watanabe} or, for a very general approach, \cite{pinsky}). 

 Let us consider the semigroup $\lg e^{-\lambda t}P_t \rg_{t\geq0}$ where
 \begin{align}
     P_t h(x,v) := h(x+vct,v) \qquad h \in C_0 \l \R^d \times S^{d-1} \r. \label{Random flight 1: definizione operatore Pt si traslazione}
 \end{align}
 Then $\lg e^{-\lambda t } P_t \rg_{t\geq0}$ is strongly continuous and is generated by the operator $cv \cdot \nabla_x - \lambda \mathcal{I}$, where $\mathcal{I}$ denotes the identity operator, on the domain 
\begin{align}
    \text{D}=\ll h \in C_0 \l \R^d \times S^{d-1} \r: c v \cdot \nabla_x h \in C_0 \l \R^d \times S^{d-1} \r \rr. \label{Isotropic: dominio del generatore del semigruppo markoviano}
\end{align}
Furthermore, let us consider  the scattering operator $L$ defined by

\begin{align}
    Lh(x,v) := \int _{S^{d-1}} h(x,v')\mu (dv'), \qquad h\in C_0 \l \R^d \times S^{d-1} \r, \label{Random flight 1: definizione operatore L di scattering}
\end{align}
 where $\mu$ denotes the uniform distribution on $S^{d-1}$. It is useful to see how \cite[Theorem 1.9.2]{Kolobook} applies here, giving the following result which is known in the literature.

\begin{te} \label{Isotropic: teorema rappresentazione semigrouppo Boltzmann}
The operator $\sgM_t^ {(c, \lambda)}$ defined in Equation \eqref{semigruppo Boltzmann} has the following representation on $D$ 
     \begin{align}
         \sgM_t^ {(c, \lambda)} = e^{-\lambda t} P_t + \sum _{n=1}^\infty \lambda ^n e^{-\lambda t}   \int _{0< \tau _1< \tau _2< \dots <\tau_n < t} P_{t-\tau _n}L   P_{\tau _n-\tau _{n-1}}L \dots L P_{\tau _1}\,  d\tau _1 \dots d\tau _n.
        \label{Duhamel}
    \end{align}
    Moreover, for $h \in \text{D}$, one has that $t \mapsto \sgM_t^ {(c, \lambda)}h$ is the unique (bounded) solution
to the linear (abstract) Boltzmann equation on $C_0 (\mathbb{R}^{d} \times S^{d-1})$
    \begin{align} 
        \frac{\partial}{\partial t} g(t)=  c\, v\cdot \nabla _x  g(t)+\lambda (L-\mathcal{I}) g(t), \label{Boltzmann equation}
\end{align}
under the initial condition  $g(0)= h$. 
    
\end{te}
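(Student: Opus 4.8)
The plan is to realize $\lg \sgM_t^{(c,\lambda)}\rg_{t\geq0}$ as a bounded perturbation of the free-transport semigroup $\lg e^{-\lambda t}P_t\rg_{t\geq0}$, so that \cite[Theorem 1.9.2]{Kolobook} applies directly. First I would record the two structural facts that feed into that theorem. On the one hand, as already observed in the excerpt, $\lg e^{-\lambda t}P_t\rg_{t\geq0}$ is strongly continuous on $C_0\l\R^d\times S^{d-1}\r$ with generator $\l c\,v\cdot\nabla_x-\lambda\mathcal{I},\,D\r$, where $D$ is the domain \eqref{Isotropic: dominio del generatore del semigruppo markoviano}. On the other hand, $L$ is a bounded operator on $C_0\l\R^d\times S^{d-1}\r$: since $Lh$ is a $\mu$-average of $h$ in the velocity variable, it is continuous by dominated convergence and vanishes at infinity (using that $S^{d-1}$ is compact, any compact set containing the support-up-to-$\varepsilon$ of $h$ may be taken of product form), and $\|Lh\|\le\|h\|$. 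Hence $\lambda L$ is a bounded perturbation with $\|\lambda L\|\le\lambda$, and $c\,v\cdot\nabla_x+\lambda(L-\mathcal{I})$, defined on $D$, is the generator of a Feller semigroup plus a bounded operator, so its domain is still $D$.

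Granting these facts, the perturbation theorem produces a strongly continuous semigroup $\lg\widetilde{T}_t\rg_{t\geq0}$ generated by $c\,v\cdot\nabla_x+\lambda(L-\mathcal{I})$ on $D$, characterized by the variation-of-constants (Duhamel) equation
\[
\widetilde{T}_t = e^{-\lambda t}P_t + \lambda\int_0^t e^{-\lambda(t-s)}\,P_{t-s}\,L\,\widetilde{T}_s\,ds .
\]
Iterating this identity and telescoping the exponential weights $e^{-\lambda(t-\tau_n)}\cdots e^{-\lambda\tau_1}=e^{-\lambda t}$ reproduces exactly the right-hand side of \eqref{Duhamel}; absolute convergence in operator norm is immediate from $\|P_s\|\le1$, $\|L\|\le1$ and the simplex volume $t^n/n!$, since $\sum_{n\geq0}\lambda^n e^{-\lambda t}t^n/n!=1$, which also shows the limit is a contraction. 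The same theorem yields that, for $h\in D$, the map $t\mapsto\widetilde{T}_th$ is the unique bounded solution of \eqref{Boltzmann equation} with $g(0)=h$, uniqueness following from the standard observation that $s\mapsto\widetilde{T}_{t-s}g(s)$ is constant whenever $g$ solves the equation.

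The remaining, genuinely probabilistic, point is to identify $\widetilde{T}_t$ with the transition operator $\sgM_t^{(c,\lambda)}h=\mathds{E}^{(x,v)}\lq h\l\Xt,\Vt\r\rq$ of \eqref{semigruppo Boltzmann}. I would do this by conditioning on the first jump time $J_1\sim\mathrm{Exp}(\lambda)$: on $\{J_1>t\}$ the particle flies freely, contributing $e^{-\lambda t}P_t h$; on $\{J_1=s\le t\}$ it flies for time $s$ along the initial direction, re-randomizes its velocity according to $\mu$, and then, by the renewal (regeneration) property of the process at $\tau_1$, evolves as an independent copy of itself for the remaining time $t-s$. Encoding the free flight as $P_s$ and the re-randomization as $L$, and using that $\l P_s L\,g\r(x,v)=\int_{S^{d-1}}g(x+vcs,v')\,\mu(dv')$, this regeneration gives
\[
\sgM_t^{(c,\lambda)} = e^{-\lambda t}P_t + \lambda\int_0^t e^{-\lambda s}\,P_s\,L\,\sgM_{t-s}^{(c,\lambda)}\,ds ,
\]
which is the same equation solved by $\widetilde{T}_t$ (substitute $s\mapsto t-s$). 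A Gronwall estimate on the difference then forces $\sgM_t^{(c,\lambda)}=\widetilde{T}_t$, and both the representation \eqref{Duhamel} and the Cauchy problem \eqref{Boltzmann equation} transfer to $\sgM_t^{(c,\lambda)}$.

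The main obstacle I anticipate is precisely this last identification. Showing that the purely probabilistic object satisfies the Duhamel integral equation requires a careful use of the regeneration at the first collision and of the assumed independence of the directions $\{v_i\}$ from the waiting times, together with verifying that all the operators involved keep us inside $C_0\l\R^d\times S^{d-1}\r$ and inside the common domain $D$. Once the integral equation is established, the remaining uniqueness step is routine.
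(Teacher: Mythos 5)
Your proposal is correct and follows essentially the same route as the paper: both realize the semigroup through the bounded-perturbation theorem \cite[Theorem 1.9.2]{Kolobook} applied to $\lg e^{-\lambda t}P_t \rg_{t\geq0}$ with the bounded operator $\lambda L$, obtain the series \eqref{Duhamel} from the variation-of-constants equation, and identify the probabilistic operator $\sgM_t^{(c,\lambda)}$ with the perturbed semigroup by conditioning on the first renewal time and appealing to uniqueness of the integral equation. The only minor difference is that you close the identification with an explicit Gronwall estimate (keeping the operator order $P_s L\, \sgM^{(c,\lambda)}_{t-s}$ consistent with the first-jump decomposition), whereas the paper invokes the uniqueness clause of \cite[Theorem 1.9.2 (ii)]{Kolobook} directly.
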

\begin{proof}
     First, we observe that the operator $L$ is bounded on $C_0 \l \R^d \times S^{d-1} \r$. Moreover, $cv\cdot \nabla_x - \lambda \mathcal{I}$ generates the semigroup $\lg e^{-\lambda t}P_t \rg_{t\geq0}$, which is strongly continuous. Hence, we can apply \cite[Theorem 1.9.2, (i)]{Kolobook} to say that 
    \begin{align}
        G^ {(c, \lambda)} :=  c\, v\cdot \nabla _x - \lambda \mathcal{I} +\lambda L, \label{Boltzmann generator}
    \end{align}
    with domain $D$,  is the generator of a strongly continuous semigroup on $C_0 \l \R^d \times S^{d-1} \r$, which has the representation
    \begin{align*}
        \Phi_t = e^{-\lambda t} P_t + \sum _{n=1}^\infty \lambda ^n e^{-\lambda t}   \int _{0< \tau _1< \tau _2< \dots <\tau_n < t} P_{t-\tau _n}L   P_{\tau _n-\tau _{n-1}}L \dots L P_{\tau _1}\,  d\tau _1 \dots d\tau _n.
    \end{align*}
    Moreover, \cite[Theorem 1.9.2, (ii)]{Kolobook} guarantees that $\Phi_th$ is the unique solution of
    \begin{align}
        \Phi_t h \, = \, e^{-\lambda t}P_t h \, + \, \int_0^t \lambda e^{-\lambda (t-s)} \Phi_s \, L \, P_{t-s} \, h \, ds.
        \label{inteq}
    \end{align}
    We now observe that $ || \sgM_t^{(c, \lambda)}h|| \leq\left\| h \right\|$ and, by a standard conditioning argument, using the first renewal time, one has 
    \begin{align*}
        \sgM_t^{(c,\lambda)} h(x,v) 
        &= e^{-\lambda t}h(x + cvt, v) + \int_0^t \int_{S^{d-1}} \lambda e^{-\lambda (t-s)}    \sgM_s^{(c,\lambda)} h(x + cv (t-s), \omega) \mu(d\omega) \, ds,
    \end{align*}
    i.e. $t \mapsto \sgM_t^{(c, \lambda)}h$ satisfies Equation \eqref{inteq}. By the uniqueness of solution, we get the thesis.
\end{proof}

We finally observe that the sum in Equation \eqref{Duhamel} converges in the sup-norm and can be explicitly written as 
\begin{align}
    & \sgM_t^ {(c, \lambda)} h(x,v) =  e^{-\lambda t} h(x+cvt,v) + \\\notag  
    &+ \sum _{n=1}^\infty \lambda ^n e^{-\lambda t}   \int _{0< \tau _1< \tau _2< \dots <\tau_n < t} \int _{\l S^{d-1} \r^{n}}\,  h(x_t, \omega_{n})\, d\tau _1 \dots d\tau _n \, \mu (d\omega_1)\dots \mu (d\omega_n)
\end{align}
where
\begin{align*}
    x_t:= x+ c\sum _{j=1}^{n} (\tau_j - \tau_{j-1}) \omega_{j-1} + c(t-\tau _{n})\, \omega_{n}, \qquad \omega_0 := v, 
\end{align*}
and $v_t = \omega_n$ for $t\in [\tau_n, \tau_{n+1})$.

Another useful result we will use is the following explicit formula for the mean squared  displacement of the Markovian random flight (details can be found in \cite{Garra}). Denote by $\mathds{P}^{x,\mu}$ the probability measure
\begin{align}
  \mathds{P}^{x,\mu} (\cdot) \coloneqq  \int_{S^{d-1}} \mathds{P}^{(x,v)} (\cdot) \, \mu(dv)
\end{align}
where $\mu$ is the uniform probability measure on the $d-1$-dimensional sphere and by $\mathds{E}^{x,\mu}$ the corresponding expectation. Then
\begin{align}
  \mathds{E}^{x,\mu} \left | X_t^{(c,\lambda)} - x\right |^2_e = \frac{2c^2}{\lambda ^2} \l \lambda t - 1 + e^{-\lambda t} \r.\label{Markovian: mean squared  displacement random flight}
\end{align}

A remarkable fact is that the Markovian isotropic transport process is a finite speed and finite rate approximation of Brownian motion. Indeed, for  Brownian motion both the speed of the particle and the rate of direction changes are infinite; but nevertheless, in many practical applications, Brownian motion is used to model random motions with large but finite speed and rate. This is allowed by the following result.
\begin{prop} \label{limite diffusivo classico} 
Let $B = \{B_t,\ t\geq0\}$ be a $d$-dimensional Brownian motion, $d\geq1$ under a probability measure $\mathbb{P}^x$, s.t. $\mathbb{P}^x(B_0 = x)=1$. Then, under the scaling limit 
\begin{align*}
c\to \infty \qquad \lambda \to \infty \qquad \frac{c^2}{\lambda} = D>0
\end{align*}
we have that 
\begin{align*}
    \lg X_t^{(c,\lambda)}\rg \overset{\text{fdd}}{\longrightarrow} \lg B_{Dt} \rg .
\end{align*}
\end{prop}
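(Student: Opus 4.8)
The plan is to exploit the explicit representation $X_t^{(c,\lambda)}-x = c\int_0^t V_s^{(c,\lambda)}\,ds$, in which the velocity process $\{V_s^{(c,\lambda)}\}$ is, under $\mathbb{P}^{x,\mu}$, a stationary Markov jump process on $S^{d-1}$ with generator $\lambda(L-\mathcal{I})$ and invariant law $\mu$. Because $\int_{S^{d-1}}v\,\mu(dv)=0$, the position is a centered additive functional of a process that becomes infinitely fast and exponentially mixing as $\lambda\to\infty$. The natural tool is therefore a corrector (martingale) decomposition followed by the functional martingale central limit theorem, which delivers weak convergence in path space and in particular the claimed convergence of finite-dimensional distributions.

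First I would solve the vector-valued Poisson equation $\lambda(L-\mathcal{I})\chi(v)=-v$. Since $L$ sends every coordinate of $v$ to $\int_{S^{d-1}}v'\,\mu(dv')=0$, the choice $\chi(v)=v/\lambda$ works: $\lambda(L-\mathcal{I})\chi(v)=\lambda\left(0-v/\lambda\right)=-v$. Applying Dynkin's formula componentwise shows that
\[
M_t := \chi\left(V_t^{(c,\lambda)}\right)-\chi\left(V_0^{(c,\lambda)}\right)+\int_0^t V_s^{(c,\lambda)}\,ds
\]
is an $\mathbb{R}^d$-valued, purely discontinuous martingale, whence
\[
X_t^{(c,\lambda)}-x = c\int_0^t V_s^{(c,\lambda)}\,ds = c\,M_t + c\left(\chi\left(V_0^{(c,\lambda)}\right)-\chi\left(V_t^{(c,\lambda)}\right)\right).
\]
Under the scaling $c=\sqrt{D\lambda}$ the boundary term is bounded in norm by $2c/\lambda = 2\sqrt{D/\lambda}\to 0$, uniformly in $t$, so it is asymptotically negligible and the limit is dictated by $cM_t$.

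Next I would pin down the limiting covariance through the predictable quadratic variation of $cM$. Writing $\Gamma(f,g)=\lambda(L-\mathcal{I})(fg)-f\,\lambda(L-\mathcal{I})g-g\,\lambda(L-\mathcal{I})f$ for the associated carr\'e-du-champ operator and taking $f=\chi^{(i)}$, $g=\chi^{(j)}$, a direct computation gives $\Gamma(\chi^{(i)},\chi^{(j)})(v)=\tfrac1\lambda\left(v_iv_j+\delta_{ij}/d\right)$, so that
\[
\left\langle cM^{(i)},cM^{(j)}\right\rangle_t = c^2\int_0^t \Gamma\left(\chi^{(i)},\chi^{(j)}\right)\left(V_s^{(c,\lambda)}\right)ds = D\int_0^t \left(\left(V_s^{(c,\lambda)}\right)_i\left(V_s^{(c,\lambda)}\right)_j+\frac{\delta_{ij}}{d}\right)ds.
\]
By stationarity and the exponential decorrelation of the directions — the same $e^{-\lambda|s-u|}$ decay of $\mathbb{E}^{x,\mu}\left[V_s\cdot V_u\right]$ that underlies the mean squared displacement formula \eqref{Markovian: mean squared  displacement random flight} — the time average $\int_0^t \left(V_s\right)_i\left(V_s\right)_j\,ds$ concentrates around its stationary mean $\tfrac{t}{d}\delta_{ij}$, with variance $O(1/\lambda)$. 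Hence $\left\langle cM^{(i)},cM^{(j)}\right\rangle_t\to \tfrac{2D}{d}\,\delta_{ij}\,t$ in probability, a covariance consistent with the limit $2Dt$ of \eqref{Markovian: mean squared  displacement random flight} and equal to that of $\{B_{Dt}\}$.

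Finally, the jumps of $cM$ have size at most $2c/\lambda=2\sqrt{D/\lambda}\to 0$, so the Lindeberg-type condition on the maximal jump holds; together with the convergence of the quadratic variation to the deterministic limit $\tfrac{2D}{d}t\,I_d$, the functional martingale central limit theorem (for instance the version of Ethier and Kurtz) yields that $cM$, and therefore $X^{(c,\lambda)}-x$, converges in distribution to the Gaussian process $\{B_{Dt}\}$. I expect the one genuinely delicate point to be the convergence in probability of the quadratic-variation process to its deterministic limit: this is precisely where the singular scaling $\lambda\to\infty$ must be exploited, through a quantitative second-moment estimate based on the fast decorrelation of the velocity. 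The remaining ingredients — the corrector $\chi$, the negligibility of the boundary term, and the verification of the martingale CLT hypotheses — are then routine, and the same corrector underlies the alternative route via Trotter--Kato convergence of the generators $G^{(c,\lambda)}$ to $\tfrac{D}{d}\Delta$.
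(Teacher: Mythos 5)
Your argument is correct, and it takes a genuinely different route from the paper, which offers no proof of Proposition \ref{limite diffusivo classico} at all: it delegates to the literature (Watanabe--Watanabe, Pinsky, Kolesnik--Orsingher), where the result is obtained either by semigroup/generator convergence or by explicit computations with the law of the transport process. Your route is the martingale--corrector method of homogenization: the Poisson equation $\lambda(L-\mathcal{I})\chi=-v$ has the closed-form solution $\chi(v)=v/\lambda$ precisely because $Lv=0$; Dynkin's formula (valid here since $\chi$ is bounded and the generator is bounded) yields $X_t^{(c,\lambda)}-x=cM_t+O(c/\lambda)$ with the boundary term deterministically bounded by $2\sqrt{D/\lambda}$; your carr\'e-du-champ computation $\Gamma(\chi^{(i)},\chi^{(j)})(v)=\lambda^{-1}\bigl(v_iv_j+\delta_{ij}/d\bigr)$ is exact; and the step you flag as delicate does hold quantitatively, since at the first jump the direction is refreshed to an independent uniform vector, so $\mathrm{Cov}\bigl(f(V_s^{(c,\lambda)}),f(V_u^{(c,\lambda)})\bigr)=e^{-\lambda|u-s|}\,\mathrm{Var}_{\mu}(f)$ exactly, giving $\mathrm{Var}\bigl(\int_0^t f(V_s^{(c,\lambda)})\,ds\bigr)\leq 2t\,\mathrm{Var}_{\mu}(f)/\lambda\to0$. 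With jumps of $cM$ bounded by $2\sqrt{D/\lambda}$, the Ethier--Kurtz martingale functional CLT applies, so your method in fact buys more than the statement asks: weak convergence in path space, not just finite-dimensional convergence, which is exactly the ``improved version'' the paper attributes to its references.

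Two small points should be fixed. First, you work under $\mathbb{P}^{x,\mu}$ (stationary velocity), whereas the proposition should also cover a fixed initial direction $v$; this is harmless---the velocity re-randomizes at the first jump time, which is exponential of rate $\lambda\to\infty$, so the non-stationary contribution to every covariance computation is $O(1/\lambda)$---but it deserves a line. Second, be explicit about normalization: your limit has covariance $(2D/d)\,\delta_{ij}\,t$, i.e.\ generator $(D/d)\Delta$, which is the correct constant, consistent with the limit $2Dt$ of the mean squared displacement \eqref{Markovian: mean squared  displacement random flight}; it equals the covariance of $\{B_{Dt}\}$ only if $B$ is normalized so that each component has variance $2t/d$. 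If $B$ is taken to be standard, the constant in the statement is off by a factor $2/d$---a looseness in the paper's formulation rather than a flaw in your proof, but since you assert equality with $\{B_{Dt}\}$ you should state which normalization of $B$ you adopt.
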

The reader can consult \cite{kolesnik, pinsky, Watanabe} and references therein for details and an improved version of this statement showing weak convergence on the space of continuous functions with the uniform topology.

\subsection{Para-Markov chains} \label{Para-Markov chains}
In this section we recall the definition of a class of non-Markovian processes, called para-Markov \cite{Facciaroni}, as these processes will be a useful tool in the following. These are right continuous processes on a countable state space whose waiting times between jumps have a suitable stochastic dependence,  creating a long memory tail in the evolution. They are proved to be equal in distribution to a continuous-time Markov chain whose time parameter is randomly scaled, hence the name para-Markov.

For a positive $\nu$, we shall indicate with $ \mathcal{M}_\nu$ the one-parameter Mittag-Leffler function, defined by
\begin{align}
    \mathcal{M}_\nu(x):= \sum _{k=0}^\infty \frac{x^k}{\Gamma (1+\nu k)} \qquad x\in \mathbb{R} .\label{Mittag Leffler function}
\end{align}
In the paper we shall use the notation 
\begin{align}
    \mathcal{M}_\nu^{(k)}(-z^{\nu}) :=  \l \frac{d}{dz} \r^k \mathcal{M}_{\nu}(-z^{\nu})  \qquad \qquad k\in\N .\label{Derivative of Mittag Leffler function}
\end{align}
Moreover, we recall that a non-negative random variable $J$ is said to follow a Mittag-Leffler distribution with parameters $\nu\in(0,1]$ and $\lambda \in (0, \infty)$ if it has cumulative distribution function
\begin{align*}
    \P(J \leq t)= 1- \mathcal{M}_{\nu}(-\lambda t^{\nu}), \qquad t \geq 0.
\end{align*}
For $\nu \in (0,1)$, by using the asymptotic properties of the Mittag-Leffler function (see, e.g., \cite{meertoa}), one gets
\begin{align}
    \P(J > t) \sim \alpha t^{-\nu} \qquad \textrm{as} \qquad t\to \infty,
    \label{asymmittag}
\end{align}
being $\alpha>0$ a constant. If follows that $J$ has infinite expectation.
Instead, for $\nu=1$ we have $\mathcal{M}_1(x)=e^x$ and $J \sim \text{Exp}(\lambda)$.

\begin{defin} \label{Definition of  para-Markov chains}
Let $Y = \{Y_n,\ n\in\N\}$ be a discrete time Markov chain, on some probability space with a probability measure $P$, and a finite or countable state space $\mathcal{S}$. For $\nu \in (0,1]$ and $\lambda : \mathcal{S} \to (0,\infty)$, let $\lg J_n,\ n\geq 1\rg$ be a sequence of non-negative random variables, such that,  $\forall n\geq1$,
\begin{align} \label{conditional dependence structure for para-Markov}
    & P(J_1>t_1, \dots, J_n>t_n |Y_0=y_0, \dots, Y_{n-1}=y_{n-1}) = \mathcal{M}_\nu \left ( -\left (\sum _{k=1}^{n} \lambda (y_{k-1})t_{k} \right )^\nu \right ) 
\end{align}
 where $t_k\geq0,\ k \in\{1,\ldots,n\}$. Let $T_n:= \sum_{k=1}^nJ_k$ and $T_0 := 0$.
A continuous-time process $X^{\nu} = \{X_t^{\nu},\ t \geq 0\}$ such that 
\begin{align*}
    X_t^{\nu} = Y_n \qquad \qquad t\in[T_n, T_{n+1}),\ n\in\mathbb{N}
\end{align*}
 is said to be a  para-Markov chain.
\end{defin}
From the above definition we have that each waiting time follows a Mittag-Leffler distribution:
$$
 P(J_k>t_k | Y_{k-1}=y_{k-1})  = \mathcal{M}_\nu \l  - \left (\lambda (y_{k-1})t_{k} \right )^\nu \r.
$$
Moreover, note that for $\nu=1$ the joint survival function in Equation (\ref{conditional dependence structure for para-Markov}) is factorized as
\begin{align} 
 P(J_1>t_1, \dots, J_n>t_n |Y_0=y_0, \dots, Y_{n-1}=y_{n-1}) = e^{ -\sum _{k=1}^{n} \lambda (y_{k-1})t_{k} },
\end{align}
i.e. the waiting times are conditionally independent, exponential random variables and hence $X^{1}$ is a homogeneous, continuous-time Markov chain.

From \cite{Facciaroni}, it is known that a  para-Markov chain $ X^{\nu} $ is equivalent, in terms of finite-dimensional distributions, to a Markov chain $ X^1 $ through a random time-change, as reported below. Specifically, this is achieved by substituting the time parameter $ t $ with $ t\Lam $, where $ \Lam $ has a Lamperti distribution, which is defined as follows (see e.g. \cite{Lamperti_James}).

\begin{defin}\label{Definition of Lamperti distribution}
    A non-negative random variable $\mathcal{L}$, on some probability space with probability $P$, follows a Lamperti distribution of parameter $\nu\in(0,1]$ if its Laplace transform is given by
\begin{align}
    E\left[e^{-\eta \mathcal{L}}\right] = \mathcal{M}_{\nu}\left(-\eta^{\nu}\right), \qquad \eta \geq 0. \label{lamperti laplace}
\end{align}
\end{defin}
For $\nu =1$ we have $E\lq e^{-\eta \mathcal{L}} \rq =e^{-\eta}$ which means $\mathcal{L}\overset{a.s.}{=}1$, while for $\nu \in (0,1)$ the variable $\mathcal{L}$ is absolutely continuous, with density given by
\begin{align}
    \mathpzc{l}(dy) := \frac{\sin \pi \nu}{\pi} \frac{y^{\nu-1}}{y^{2 \nu} +2y^\nu \cos \pi \nu +1} \mathds{1}_{(0, +\infty)}(y)\ dy. \label{Misura Lamperti distribution}
\end{align}

Consult \cite[Equations (1.1) and (3.3)]{Lamperti_James} with $\theta = 0$ and $\eta = z^{\frac{1}{\alpha}}$ for details. We also recall a property which will be useful later in the article:
\begin{align}
    \mathcal{L} \overset{d}{=} \frac{1}{\mathcal{L}},\label{Lamperti: L = 1/L}
\end{align}
where $\overset{d}{=}$ denotes the equality in distribution. 
\begin{te} \label{Proposition:  para-Makov chains as time-change of Markov chains through Lamperti}
Let us consider a para-Markov chain $X^{\nu}$ as in Definition \ref{Definition of  para-Markov chains}.
Let $\Lam$ be a Lamperti random variable of parameter $\nu\in(0,1]$.
Then we have
    \begin{align*}
        X_t^{\nu} \overset{fdd}{=} X_{\Lam t}^{1} \qquad \qquad \forall t \geq0,
    \end{align*}
    where $\overset{\text{fdd}}{=}$ denotes equality of  finite dimensional distributions.
\end{te}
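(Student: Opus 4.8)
The plan is to show that $X^\nu$ and the time-changed chain $X^1_{\Lam\,\cdot}$ are built from the \emph{same} embedded discrete-time Markov chain $Y=\{Y_n\}$ and that their waiting-time structures coincide in law once one conditions on $Y$. Since any process of the form in Definition \ref{Definition of  para-Markov chains} is a renewal-type process whose finite-dimensional distributions are a fixed functional of the joint law of $(Y_n)_n$ together with the partial sums $T_n=\sum_{k=1}^n J_k$, it suffices to prove that these two ingredients agree for $X^\nu$ and for $X^1_{\Lam\,\cdot}$. Throughout, $\Lam$ is taken independent of all the Markov-chain randomness.

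First I would fix times $s_1<\dots<s_m$ and states $z_1,\dots,z_m$ and expand over the number of jumps:
\begin{align*}
P\!\left(X^\nu_{s_1}=z_1,\dots,X^\nu_{s_m}=z_m\right)=\sum_{0\le n_1\le\dots\le n_m}P\!\left(Y_{n_j}=z_j,\ T_{n_j}\le s_j<T_{n_j+1},\ 1\le j\le m\right).
\end{align*}
This exhibits every finite-dimensional probability as one and the same measurable functional of the joint law of $(Y_n)_n$ and $(T_n)_n$. The identical functional computes the finite-dimensional distributions of $X^1_{\Lam\,\cdot}$, because $X^1_{\Lam t}=Y_n$ precisely on $\{T^1_n/\Lam\le t<T^1_{n+1}/\Lam\}$, so the time-changed chain is again of renewal type with embedded chain $Y$ and partial sums $T^1_n/\Lam$, i.e. waiting times $J^1_k/\Lam$. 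Everything therefore reduces to comparing the conditional joint survival functions of $(J_1,\dots,J_n)$ and of $(J^1_1/\Lam,\dots,J^1_n/\Lam)$ given $Y_0=y_0,\dots,Y_{n-1}=y_{n-1}$.

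The heart of the argument is the Laplace-transform characterization \eqref{lamperti laplace} of the Lamperti law. Conditioning on $\Lam$ and using the conditional independence of the exponential waiting times of $X^1$, for $t_1,\dots,t_n\ge0$ I compute
\begin{align*}
P\!\left(\frac{J^1_1}{\Lam}>t_1,\dots,\frac{J^1_n}{\Lam}>t_n\ \Big|\ Y\right)
&=E\!\left[\prod_{k=1}^n e^{-\Lam\lambda(y_{k-1})t_k}\right]
=E\!\left[e^{-\Lam\sum_{k=1}^n\lambda(y_{k-1})t_k}\right]\\
&=\mathcal{M}_\nu\!\left(-\left(\sum_{k=1}^n\lambda(y_{k-1})t_k\right)^\nu\right),
\end{align*}
where the last equality is \eqref{lamperti laplace} applied with $\eta=\sum_{k=1}^n\lambda(y_{k-1})t_k$. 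By \eqref{conditional dependence structure for para-Markov} the right-hand side is exactly the conditional joint survival function of the para-Markov waiting times $(J_1,\dots,J_n)$. Since the embedded chains are literally the same $Y$, this proves that the joint law of $(Y;(J_n)_n)$ equals that of $(Y;(J^1_n/\Lam)_n)$; feeding this equality back into the functional of the previous paragraph yields $X^\nu_t\overset{fdd}{=}X^1_{\Lam t}$. The passage between scaling the waiting times by $1/\Lam$ and scaling the time parameter by $\Lam$ can, if desired, be phrased through the identity \eqref{Lamperti: L = 1/L}.

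The Mittag-Leffler/Laplace computation above is elementary; the step I would treat most carefully is the reduction in the first two paragraphs, namely the bookkeeping that turns equality of the conditional joint law of the waiting times into equality of the finite-dimensional distributions of the \emph{continuous-time} processes. One must check that the (random, almost surely finite) number of jumps before any fixed time makes the series expansion of each finite-dimensional probability a well-defined functional of finitely many coordinates, and that the independence of $\Lam$ from the Markov-chain randomness is correctly preserved when the mixture over $\Lam$ is assembled. Once this is in place, the Laplace-transform identity does all the real work.
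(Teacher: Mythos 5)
Your proof is correct, and a preliminary remark is in order: the paper does not prove this theorem itself --- it defers entirely to \cite{Facciaroni}, Theorem 4 --- so there is no in-paper argument to compare with, and your write-up stands as a self-contained proof. Its engine is the right one, and it is the same identity the paper itself exploits in the constant-rate special case at Equation \eqref{Preliminaries: relazione J_k esponenz e Mittag}: conditionally on the embedded chain, the holding times of $X^1$ are independent exponentials with rates $\lambda(Y_{k-1})$, and integrating over the independent $\Lam$ gives $E\bigl[e^{-\Lam \sum_{k=1}^n \lambda(y_{k-1}) t_k}\bigr] = \mathcal{M}_\nu\bigl(-\bigl(\sum_{k=1}^n \lambda(y_{k-1}) t_k\bigr)^\nu\bigr)$ by \eqref{lamperti laplace}, which is exactly \eqref{conditional dependence structure for para-Markov}; the pathwise observation that $X^1_{\Lam t}=Y_n$ on $\{T^1_n/\Lam \leq t < T^1_{n+1}/\Lam\}$ then correctly converts the time change into the scaling of the waiting times, without even needing \eqref{Lamperti: L = 1/L}. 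Two points you flag as ``to be treated carefully'' do go through and deserve one explicit sentence each in a final version. First, your expansion of a finite-dimensional probability over jump counts is a partition only if $T_n \to \infty$ almost surely; this non-explosion is implicit in Definition \ref{Definition of  para-Markov chains} (which requires $X^\nu_t$ to be defined for all $t \geq 0$), and since $\Lam \in (0,\infty)$ a.s., your equality of the joint laws of $(Y,(J_n)_n)$ and $(Y,(J^1_n/\Lam)_n)$ transfers non-explosion between the two sides automatically. Second, your survival computation conditions on the whole path $Y$, whereas \eqref{conditional dependence structure for para-Markov} conditions only on $(Y_0,\dots,Y_{n-1})$; since, given $Y$, the first $n$ holding times of $X^1$ depend only on those coordinates, averaging out the future of the chain closes this small mismatch. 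Note finally that the term with largest jump index $n_m$ in your expansion involves exactly $(Y_0,\dots,Y_{n_m})$ and $(J_1,\dots,J_{n_m+1})$, which is precisely the data determined by \eqref{conditional dependence structure for para-Markov} with $n = n_m+1$ (joint survival functions determine joint laws), so the bookkeeping reduction in your first paragraph is fully justified.
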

For a proof, see \cite[Theorem 4]{Facciaroni}.

A corner point is that the para-Markov chain $X^\nu$ is governed by an integro-differential equation. The operator acting on the time variable is the Caputo fractional derivative (see Equation (1.10) in \cite{GorenfloMainardi} or \cite{Baleanu} for a general introduction), defined by
\begin{align}
    \frac{\partial ^\nu }{\partial t^\nu} f(t):= \begin{cases}
     \frac{1}{\Gamma (1-\nu)}\frac{\partial}{\partial t} \int_0^t (f(s)-f(0)) (t-s)^{-\nu} ds ,  & \nu \in (0,1), \\
    f'(t), & \nu =1.
\end{cases}\label{Definizione di Derivata di Caputo}
\end{align}

Let us consider a para-Markov chain $X^{\nu}$ and let us indicate with $G$ the infinitesimal generator of the Markov chain $X^{1}$. Let  $P(t)=[p_{ij}(t)]$ be the transition matrix of $X^{\nu}$. In \cite{Facciaroni} it is proven that in the case of finite states space $\mathcal{S}$, the transition matrix $P(t)$ is the solution of
\begin{align}
    \frac{\partial ^{\nu}}{\partial t^{\nu}} P(t) = -(-G)^{\nu} P(t) \label{Governing equation para markov}
\end{align}
with initial condition $P(0) = I$, being $I$ the identity matrix.
We stress that for $\nu =1$, Equation \eqref{Governing equation para markov} formally reduces to the Kolmogorov backward equation which governs a Markov chain.

\subsection{The para-Markov counting process}\label{Exchengeable Fractional PP}

We here recall the definition of a counting process,  which will be useful in the rest of the paper, as a special case of para-Markov chain with infinitely many states. For major details, see \cite{Facciaroni}.

Let us consider a para-Markov chain $ \ef := \{\ef_t,\ t\geq0\}$ as in Definition \ref{Definition of  para-Markov chains}. Let us assume a deterministic embedded chain 
\begin{align}
    Y_n = n, \qquad n\in\N
\end{align}
and $\lambda\in (0,\infty)$ constant. Then 
$\ef$ is said to be the \textit{Exchangeable fractional Poisson process} with parameters $\lambda$ and $\nu$. 

In this case the sequence of waiting times $\lg J_n^{(\nu)},\ n\geq1 \rg$    satisfies the following property: for each $n\geq1$,
\begin{align}
        P\l J_1^{(\nu)} > t_1,\ \ldots,\ J_n^{(\nu)} > t_n \r = 
 \mathcal{M}_{\nu}\left(-\lambda^{\nu} \left(\sum_{k=1}^n t_k\right)^{\nu}\right) \qquad \nu\in(0,1],\ \ \lambda \in (0,\infty).   \label{exchangeable fractional Poisson: joint survival function for (J_1, ..., J_n)}
\end{align}

From Equation \eqref{exchangeable fractional Poisson: joint survival function for (J_1, ..., J_n)} it follows that the distribution of $\ef$ reads

\begin{align}
    P\l \ef_t =n \r = \frac{(-t)^n}{n!} \mathcal{M}_\nu ^{(n)} (-\lambda ^\nu t^\nu) \qquad n\in\N .\label{Distribution of exchangeable fractional Poisson process}
\end{align}

An important property of the sequence of waiting times described in Equation \eqref{exchangeable fractional Poisson: joint survival function for (J_1, ..., J_n)} is that it forms an infinite \textit{Schur-constant} sequence. We recall that a sequence \( \{Z_k\}_{k=1}^\infty \) of non-negative random variables is said to be an infinite \textit{Schur-constant} sequence if, for any $n\geq1$, the joint survival function satisfies 
\begin{align*}
    P(Z_1 > t_1, Z_2 > t_2, \dots, Z_n > t_n) = S(t_1 + t_2 + \dots + t_n),
\end{align*}
for some function \( S \) that does not depend on \( n \).  
This structure represents a particular model of \textit{exchangeable} waiting times, as the function \( S \) depends on the \( t_k \) only through their sum. See \cite{Barlow,Spizzichino1,Spizzichino2} for details. This property justifies the name 
chosen for this counting process.

We note that for $\nu = 1$ we have that Equation \eqref{exchangeable fractional Poisson: joint survival function for (J_1, ..., J_n)} has the form
\begin{align}
        P\lq J_1 > t_1,\ \ldots,\ J_n > t_n \rq = e^{- \lambda \sum _{k=1}^n t_k},
        \end{align}
namely the waiting times $\lg J_n,\ n\geq1 \rg$ are i.i.d. exponential and $\mathcal{N}^1$ reduces to a Poisson process of parameter $\lambda$.

Moreover, we emphasize that from Equations \eqref{lamperti laplace} and \eqref{exchangeable fractional Poisson: joint survival function for (J_1, ..., J_n)}  one has
\begin{align}
    \l J_1^{(\nu)},\ldots, J_n^{(\nu)} \r \overset{d}{=} \frac{1}{\mathcal{L}}\l J_1,\ldots, J_n \r, \qquad n\geq1, \label{Preliminaries: relazione J_k esponenz e Mittag}
\end{align}

being $\lg J_n,\ n\geq1 \rg$ i.i.d. exponential of parameter $\lambda$. This also shows that $\lg J_n^{(\nu)},\ n\geq1\rg$ are exponential of random parameter $\lambda \Lam$. \\

We conclude this section by observing that $\lambda$ can be interpreted as a measure of the intensity of the process, in the sense that it controls the probability to have at least one event in a small time interval $\Delta t$. Indeed, using the relation of Theorem \ref{Proposition:  para-Makov chains as time-change of Markov chains through Lamperti}, we have
\begin{align*}
    P\l \ef _{t+\Delta t}- \ef _t=0 \r& = P\l \mathcal{N}^1 _{\mathcal{L}(t+\Delta t)}- \mathcal{N}^1 _{\mathcal{L}t}=0 \r \notag \\ &= \int _0^{\infty} e^{-\lambda y \Delta t} \, P(\mathcal{L} \in dy)\notag \\ &=  \mathcal{M} _\nu (-\lambda ^\nu (\Delta t)^\nu) \notag \\ &= 1-\frac{\lambda ^\nu (\Delta t)^\nu }{\Gamma (1+\nu)} + o \l \l \Delta t \r^{\nu} \r,
\end{align*}
which implies

\begin{align}
    P(\ef _{t+\Delta t}- \ef _t \geq 1) = \frac{\lambda ^\nu (\Delta t)^\nu }{\Gamma (1+\nu)} + o \l \l \Delta t \r^{\nu} \r. \label{comportamento locale}
\end{align}

We further observe that in the Poisson case $\lambda$ also has the meaning of mean number of events per unit time. Indeed, by the Poisson distribution, we have $\mathbb{E}\mathcal{N}^1 _t= \lambda$ if $t=1$. In contrast, the mean number of events per unit time in the exchangeable fractional Poisson process is infinite for $\nu\in(0,1)$ for each $\lambda$.

\section{Motion among random obstacles (Model 1)} \label{Sec: motion among random obstacles (model 1)}
We recall that Lorentz processes are models for the  motion of a particle among randomly distributed obstacles. 

In Gallavotti \cite{Gallavotti}, the set of obstacle centres is a Poisson point process; under the Boltzmann-Grad limit, the Lorentz process converges to the Markovian random flight defined in Section \ref{Markovian random flight}. As seen in Proposition \ref{limite diffusivo classico}, a further scaling limit leads to Brownian motion.

In this section, we shall define a Lorentz process based on a more general distribution of obstacles. To this aim, we define the  Mittag-Leffler point process, which includes the Poisson point process as a special case.

In this new setting, the obstacle system is more rarefied, making the model suitable for describing  more dilute gases. In particular, the free flight time of the particle has a distribution whose survival function has a power-law decay $Kt^{-\nu}$, $\nu \in (0,1),\ K > 0$, and thus has infinite expectation. This feature, in particular, has been previously ruled out by classical technical assumptions when studying Lorentz processes \cite{Spohn}. We shall prove that, under a Boltzmann-Grad type limit, the process converges to a non-Markovian random flight with infinite mean flight times. We will show that this random flight has a mean square displacement spreading, for $t \to +\infty$, as $ \widetilde{K}t^{2-\nu}$, $\widetilde{K}>0$ and furthermore it converges, under a second suitable limit, to an anomalous diffusive process.

For the sake of clarity, we believe it is appropriate to first describe the new random flight model in Section \ref{Subsection: Random flight Model 1}. Then, we shall define the Lorentz model in Section \ref{Sec: Mittag-Leffler distribution of obstacles} and finally we shall state the  limit theorems in Sections \ref{Sec: The Boltzmann-Grad limit modello 1} and \ref{Diffusion model 1}.

\subsection{A transport process with infinite mean flight times} \label{Subsection: Random flight Model 1}
We consider an isotropic transport process with infinite mean and stochastically dependent flight times, which is defined as in Definition \ref{Definizione isotropic transport process generica}, with the following assumptions.

\begin{assumption}{A1}{}\label{Isotropic: assumptions Model 1}
   Suppose that in Definition \ref{Definizione isotropic transport process generica} the joint distribution of the flight times $\lg J_n^{(\nu)},\ n\geq1 \rg$ is given, under all the probability measures $\mathds{P}^{(x,v)}$, by Equation \eqref{exchangeable fractional Poisson: joint survival function for (J_1, ..., J_n)} for some $\nu\in(0,1]$ and the (independent) speed is $\mathcal{C} = c$ almost surely, where $c$ is a positive constant.
\end{assumption}
In this case, we shall denote the random flight process of Definition \ref{Definizione isotropic transport process generica} by 
\begin{align*}
        \l {X}_{(1)}^\nu, {V}_{(1)}^\nu \r =\lg \l \Xl,  \Vl\r,\ t\geq0 \rg,  \qquad \nu \in (0,1].
\end{align*}
The assumption \ref{Isotropic: assumptions Model 1} implies that the number of direction changes up to time $t$ is  $\ef_t$, i.e. the exchangeable fractional Poisson process   defined in Section \ref{Exchengeable Fractional PP}.  

Let us consider initial data $(x,v)\in \R^d\times S^{d-1}$.  We will use the family of operators $\{\opmoduno_t\}_{t\geq0}$ acting on suitable functions and defined by
\begin{align}
    \opmoduno_t h (x,v)&:= \mathds{E}^{(x,v)}\lq h \l \Xl,  \Vl\r \rq \qquad h \in  C_0\l \R^d \times S^{d-1}\r , \label{uuu}
\end{align}
where $\mathds{E}^{(x,v)}$ denotes the expectation under the measure $\mathds{P}^{x,v}$.

We now obtain the counterpart of the evolution formula presented in Equation \eqref{Duhamel} (holding in the Markovian case) in the non-Markovian setting arising under \ref{Isotropic: assumptions Model 1}.

\begin{te}\label{Model 1: teorema su Duhamel}
   Let  $\l X^{(c,\lambda)}_t, V^{(c ,\lambda)}_t \r$ be the Markovian random flight defined in Section  \ref{Markovian random flight} and let $\mathcal{L}$ follow a Lamperti distribution as in Definition \ref{Definition of Lamperti distribution}. 
   Then
    \begin{align} \label{mixture Model 1}
        \l \Xl,  \Vl\r \overset{\text{d}}{=} \l X^{(c ,\lambda \mathcal{L})}_t, V^{(c ,\lambda \mathcal{L})}_t \r.
    \end{align}
    Moreover, the following formula holds
    \begin{align}
        \opmoduno_t h  = \, & \mathcal{M}_\nu (-\lambda ^\nu t^\nu)P_th  \notag \\ &+ \sum _{n=1}^\infty (-\lambda )^n \mathcal{M}_\nu ^{(n)}(-\lambda ^\nu t^\nu)  \notag  \int _{0< \tau _1< \tau _2< \dots < t}P_{t-\tau _n}L  P_{\tau _n-\tau _{n-1}}L \dots LP_{\tau _1}h\,  d\tau _1 \dots d\tau _n \notag
    \end{align}
    where $\mathcal{M}_\nu^{(n)}$ is defined in Equation \eqref{Derivative of Mittag Leffler function}.
\end{te}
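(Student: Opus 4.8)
The plan is to deduce both claims from the mixing relation \eqref{Preliminaries: relazione J_k esponenz e Mittag}, which exhibits the dependent flight times $\lg J_n^{(\nu)},\ n\geq 1\rg$ as i.i.d. exponential times divided by a single independent Lamperti factor $\Lam$. For the distributional identity \eqref{mixture Model 1}, I would first observe that the equality in \eqref{Preliminaries: relazione J_k esponenz e Mittag} holds jointly for every $n$, so that by consistency of the finite-dimensional laws the whole sequence $\lg J_n^{(\nu)}\rg$ is distributed as $\lg \Lam^{-1} J_n\rg$, where $\lg J_n\rg$ are i.i.d. exponential of parameter $\lambda$ and independent of $\Lam$. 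Conditioning on $\lg \Lam = y\rg$, the times $\lg y^{-1} J_n\rg$ are i.i.d. exponential of parameter $\lambda y$; since the directions $\lg v_i\rg$ are independent of the flight times and are untouched by the time rescaling, the transport process generated by these data is, conditionally, exactly the Markovian flight $\l X^{(c,\lambda y)}_t, V^{(c,\lambda y)}_t\r$ of Section \ref{Markovian random flight}. Averaging over the law of $\Lam$ then yields \eqref{mixture Model 1}.

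For the operator formula I would combine \eqref{mixture Model 1} with the definition \eqref{uuu}, writing for fixed $t\geq0$
\[
\opmoduno_t h(x,v) = \mathds{E}^{(x,v)}\lq h\l X^{(c,\lambda\Lam)}_t, V^{(c,\lambda\Lam)}_t\r\rq = \int_0^\infty \sgM_t^{(c,\lambda y)} h(x,v)\, \mathpzc{l}(dy),
\]
where the second equality uses the independence of $\Lam$ from the flight data and the definition \eqref{semigruppo Boltzmann} of $\sgM^{(c,\lambda y)}$. Inserting the Duhamel representation \eqref{Duhamel} for $\sgM_t^{(c,\lambda y)}$ and interchanging the $y$-integration with the series and with the iterated simplex integrals reduces the claim to evaluating term by term the scalar integrals $\int_0^\infty e^{-\lambda y t}\,\mathpzc{l}(dy)$ and $\int_0^\infty (\lambda y)^n e^{-\lambda y t}\,\mathpzc{l}(dy)$.

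The zeroth-order integral is the Laplace transform of the Lamperti law at $\eta = \lambda t$, hence equals $\mathcal{M}_\nu(-\lambda^\nu t^\nu)$ by \eqref{lamperti laplace}, giving the leading term. For the $n$-th term I would differentiate this Laplace transform $n$ times in $\eta$: from $\int_0^\infty y^n e^{-\eta y}\,\mathpzc{l}(dy) = (-1)^n \frac{d^n}{d\eta^n}\mathcal{M}_\nu(-\eta^\nu)$, evaluation at $\eta = \lambda t$ together with the convention \eqref{Derivative of Mittag Leffler function} gives $\int_0^\infty (\lambda y)^n e^{-\lambda y t}\,\mathpzc{l}(dy) = (-\lambda)^n \mathcal{M}_\nu^{(n)}(-\lambda^\nu t^\nu)$, which is exactly the coefficient in the stated expansion. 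Assembling these scalar identities reproduces the formula.

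The step I expect to require the most care is the justification of the interchange of the $\Lam$-integration with the infinite sum and the iterated integrals. Because the kernels $P_s$ and $L$ are positivity preserving with $\|P_s\|\leq 1$ and $\|L\|\leq 1$, the norm of the $n$-th Duhamel term is bounded by $\|h\|\, t^n/n!$, so the $n$-th term of $\opmoduno_t h$ is controlled by $\frac{t^n}{n!}\int_0^\infty (\lambda y)^n e^{-\lambda y t}\,\mathpzc{l}(dy)\,\|h\|$. Summing these bounds and applying Tonelli's theorem gives $\sum_{n\geq0}\frac{(\lambda t)^n}{n!}\int_0^\infty y^n e^{-\lambda y t}\,\mathpzc{l}(dy) = \int_0^\infty e^{-\lambda y t}e^{\lambda y t}\,\mathpzc{l}(dy) = 1$, so the series converges absolutely in the sup-norm and Fubini–Tonelli legitimises every exchange; this simultaneously confirms the contraction bound $\|\opmoduno_t h\|\leq \|h\|$. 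The differentiation-under-the-integral-sign identity for the moments of $\Lam$ is then licensed by the exponential decay of $e^{-\lambda y t}$ and the integrability of the Lamperti density \eqref{Misura Lamperti distribution}.
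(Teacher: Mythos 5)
Your proposal is correct and takes essentially the same route as the paper's proof: both realize the non-Markovian flight as a Lamperti mixture of Markovian flights via \eqref{Preliminaries: relazione J_k esponenz e Mittag}, write $\opmoduno_t h = \int_0^\infty \sgM_t^{(c,\lambda y)} h\, \mathpzc{l}(dy)$, insert the Duhamel expansion \eqref{Duhamel}, and identify the coefficients by $n$-fold differentiation of the Lamperti Laplace transform \eqref{lamperti laplace}. Your Tonelli-type domination (the $n$-th term bounded by $\|h\|\, t^n/n!$ times the moment integral, summing to $\|h\|$) is the same dominated-convergence step the paper performs with the bound $\|h\|\l 1-e^{-\lambda l t}\r$ via the Bochner-integral version of the dominated convergence theorem.
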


\begin{proof}
Under the Assumptions \ref{Isotropic: assumptions Model 1},  the isotropic transport process in Definition \ref{Definizione isotropic transport process generica} reads, for each $t\geq0$,

\begin{align*}
    \l \begin{matrix}
        \Vl \\  \Xl
    \end{matrix} \r &= 
    \l \begin{matrix}
        v_m \\  
        x + \sum_{k = 1}^{m} cv_{k-1}J_k^{(\nu)} + c v_m\l t - \sum_{k = 1}^{m} J_k^{(\nu)} \r
    \end{matrix} \r
    \qquad
    \sum_{k = 1}^{m} J_k^{(\nu)} \leq t < \sum_{k = 1}^{m+1} J_k^{(\nu)} 
  \end{align*}  
where $m\geq 0$, which can compactly be re-written as
    \begin{align*}
 \l \begin{matrix}
        \Vl \\  \Xl
    \end{matrix} \r=    \sum_{m = 0}^{+\infty} \l \begin{matrix}
        v_m \\  
        x + \sum_{k = 1}^{m} cv_{k-1}J_k^{(\nu)} + c v_m\l t - \sum_{k = 1}^{m} J_k^{(\nu)} \r
    \end{matrix} \r 
    \mathds{1}_{\lg \sum_{k = 1}^{m} J_k^{(\nu)} \leq t < \sum_{k = 1}^{m+1} J_k^{(\nu)} \rg}.
\end{align*}

From Equation \eqref{Preliminaries: relazione J_k esponenz e Mittag} we have, for each $t\geq0$,
\begin{align*}
    \l \begin{matrix}
        \Vl \\  \Xl
    \end{matrix} \r \overset{d}{=} 
    \l \begin{matrix}
        v_m \\  
        x + \sum_{k = 1}^{m} cv_{k-1}\frac{J_k}{\mathcal{L}} + c v_m\l t - \sum_{k = 1}^{m} \frac{J_k}{\mathcal{L}} \r
    \end{matrix} \r
    \qquad
    \sum_{k = 1}^{m} \frac{J_k}{\mathcal{L}} \leq t < \sum_{k = 1}^{m+1} \frac{J_k}{\mathcal{L}}.
\end{align*}
where $\lg J_k / \Lam \rg$ are exponential random variables of random parameter $\lambda \Lam$. Therefore the equality in distribution of Equation \eqref{mixture Model 1} follows. 

Hence we can re-write Equation \eqref{uuu} as 
\begin{align*}
    \opmoduno_t h (x,v)&= \int _0^{\infty} \mathds{E}^{(x,v)} \lq h  \l X^{(c,\lambda l)}_{t}, V^{(c,\lambda l)}_{t} \r \rq \, \mathpzc{l}(dl)   \\
    & \notag  = \int _0^{\infty} \sgM^{(c, \lambda l)}_{t} h (x,v) \,  \mathpzc{l}(dl)
\end{align*}
where $\lg \sgM_t ^{(c,\lambda)},\ t\geq0 \rg$ is the Markovian semigroup defined in \eqref{semigruppo Boltzmann} and $\mathpzc{l}(\cdot)$ is the Lamperti measure as in Equation \eqref{Misura Lamperti distribution}. 
By using the representation of the Markovian operator in Equation \eqref{Duhamel} we obtain
\begin{align}
    \opmoduno_t h \, = \, &\int_0^{+\infty} e^{-\lambda l t}\, P_t\, h\, \mathpzc{l}(dl) \notag 
    \\ &+ \int_0^{+\infty} \l \sum_{n=1}^{+\infty} e^{-\lambda l t} (\lambda l)^n \int_{0 < \tau_1 < \cdots < \tau_n < t} P_{t-\tau_n} L \cdots L P_{\tau_1} \, h \, d\tau_1 \cdots d\tau_n \r \, \mathpzc{l}(dl).
    \label{here}
\end{align}

We now observe that we can interchange the summation and the integral in Equation \eqref{here}, by using the dominated convergence theorem for Bochner integrals (see \cite[Theorem 1.1.8]{abhn}). Consider
\begin{align*}
    f_k := \sum_{n=1}^{k} e^{-\lambda l t} (\lambda l)^n \int_{0 < \tau_1 < \cdots < \tau_n < t} P_{t-\tau_n} L \cdots L P_{\tau_1} h \, d\tau_1 \cdots d\tau_n .
\end{align*}

We have to show that $||f_k||$ is dominated by an integrable function, which is independent of $k$. Indeed, as $P_t$ and $L$ are contraction operators
\begin{align*}
    ||f_k|| &\leq \sum_{n=1}^{+\infty} \left\| e^{-\lambda l t} (\lambda l)^n \int_{0< \tau_1 < \cdots < \tau_n < t} P_{t-\tau_n} L \cdots LP_{\tau_1}\, h \, d\tau_1 \cdots d\tau_n \right\|\, \notag \\ 
     &\leq  \sum_{n=1}^{+\infty}  e^{-\lambda l t} (\lambda l)^n || h ||  \int_{0< \tau_1 < \cdots < \tau_n < t} d\tau_1 \cdots d\tau_n  \notag \\
     &= \sum_{n = 1}^{+\infty} e^{-\lambda l t} (\lambda l)^n ||h|| \frac{t^n}{n!}  = \, \left\| h\right\| \l 1 - e^{-\lambda l t}\r \notag.
\end{align*}
Moreover,
\begin{align*}
    \int_0^{+\infty} ||h|| \l 1 - e^{-\lambda l t}  \r \mathpzc{l}(dl) 
      = || h || ( 1 - \mathcal{M}_{\nu}\l-(\lambda t)^{\nu}\r.
\end{align*}

Then \cite[Theorem 1.1.8]{abhn} applies here and from Equation \eqref{here} we get
\begin{align}
    \opmoduno_t h \, = \, &\mathcal{M}_{\nu} (-\lambda^\nu t^\nu) P_th  \notag \\
    & + \sum_{n=1}^{+\infty} \int_0^{+\infty} \l e^{-\lambda l t} (\lambda l)^n \int_{0< \tau_1 < \cdots < \tau_n < t} P_{t-\tau_n} L \cdots LP_{\tau_1} h  \, d\tau_1 \cdots d\tau_n \r \mathpzc{l}(dl) \notag \\
    = \, &\mathcal{M}_{\nu} (-\lambda^\nu t^\nu) P_th \notag \\
    & + \sum_{n=1}^{+\infty} \l \int_0^{+\infty} e^{-\lambda l t} (\lambda l)^n \mathpzc{l}(dl) \r \l \int_{0< \tau_1 < \cdots < \tau_n < t} P_{t-\tau_n} L \cdots LP_{\tau_1} h  \, d\tau_1 \cdots d\tau_n \r  \notag\\
    = \,
    &\mathcal{M}_{\nu} (-\lambda^\nu t^\nu) P_th \notag \\
    & + \sum_{n=1}^{+\infty}  \l (-\lambda)^n \int_0^{+\infty} \l \frac{d}{d(\lambda t)}\r^n  e^{-\lambda l t} \mathpzc{l}(dl) \r \l \int_{0< \tau_1 < \cdots < \tau_n < t} P_{t-\tau_n} L \cdots LP_{\tau_1} h  \, d\tau_1 \cdots d\tau_n \r \notag \\
    \end{align}
Hence, by the dominated convergence theorem we can exchange the derivative and the integral in the first term of the product to obtain
    \begin{align}
   \opmoduno_t h
    = \, &\mathcal{M}_{\nu} (-\lambda^\nu t^\nu) P_th \notag \\
    & + \sum_{n=1}^{+\infty}    (-\lambda)^n  \mathcal{M}_{\nu}^{(n)}(-\lambda^\nu t^\nu) \int_{0< \tau_1 < \cdots < \tau_n < t} P_{t-\tau_n} L \cdots LP_{\tau_1} h  \, d\tau_1 \cdots d\tau_n,  \notag 
\end{align}
where we used the notation in Equation \eqref{Derivative of Mittag Leffler function}.
\end{proof}
In analogy with the Markovian case, it is possible to explicitly write the operator $\opmoduno_t$ as
\begin{align}
    & \opmoduno_t h(x,v) =  \mathcal{M}_\nu (-\lambda ^\nu t^\nu) h(x+cvt,v) + \notag \\
    &+ \sum _{n=1}^\infty  (-\lambda )^n \mathcal{M}_\nu ^{(n)}(-\lambda ^\nu t^\nu)   \int _{0< \tau _1< \tau _2< \dots < t} \int _{(S^{d-1})^{n}}\, h(x_t, \omega_{n})\, d\tau _1 \dots d\tau _n \, \mu (d\omega_1)\dots \mu (d\omega_n) \notag
\end{align}
where
\begin{align*}
    x_t:= x+ c\sum _{j=1}^{n} (\tau_j - \tau_{j-1}) \omega_{j-1} + c(t-\tau _{n})\, \omega_{n}, \qquad \omega_0 := v,
\end{align*}
and $v_t = \omega_n$ for $t\in [\tau_n, \tau_{n+1})$.

 We emphasize that the presented random flight is not trivially defined as a mixture of  Markovian ones. Indeed, the definition of the process depends on Assumptions \ref{Isotropic: assumptions Model 1}, and such a process turns out to be completely non-Markovian due to the depending structure of flight times. Second,  Theorem \ref{Model 1: teorema su Duhamel} just states an equality in distribution, which is proved as a consequence of the definition; this makes the random flight analytically tractable.

We now aim to find the mean squared  displacement of this non-Markovian random flight. We shall use the equality in distribution of Theorem \ref{Model 1: teorema su Duhamel}, which makes the transport process analytically tractable. 

\begin{prop} \label{Model 1: proposition on mean squared  displacement}
    Let $\lg \l \Xl,  \Vl\r,\ t\geq0 \rg$ be the non-Markovian random flight as defined by Assumptions \ref{Isotropic: assumptions Model 1}. Denote $\mathds{P}^{x,\mu} \coloneqq \int_{S^{d-1}} \mathds{P}^{x,v}\mu(dv)$ and by $\mathds{E}^{x,\mu}$ the corresponding expectation. Then we have
    \begin{align}
       \mathds{E}^{x,\mu} \left | \Xl - x \right |_e^2 = 2c^2 \int_0^t d w_1 \int_0^{w_1} dw_2\, \mathcal{M}_{\nu} \l -\lambda^{\nu} w_2^{\nu} \r, \label{Model 1: mean squared  displacement}
    \end{align}
    and then for large $t$ one has
    \begin{align}
        \mathds{E}^{x,\mu}\left | \Xl - x \right |_e^2 \sim \varepsilon\,t^{2-\nu}, \qquad \nu \in (0,1],\label{Model 1: mean squared  displacement asintotico}
    \end{align}
    where $\varepsilon$ is a positive constant.
\end{prop}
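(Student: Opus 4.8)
The plan is to exploit the equality in distribution established in Theorem \ref{Model 1: teorema su Duhamel}, which represents the non-Markovian flight as a Lamperti mixture of Markovian flights, and then to reduce everything to the explicit Markovian mean squared displacement formula \eqref{Markovian: mean squared  displacement random flight}. Concretely, since the averaging over the initial direction and over the Lamperti variable are independent, conditioning on the value $\mathcal{L}=l$ turns $\Xl$ into the Markovian flight $X_t^{(c,\lambda l)}$, so that
\begin{align*}
\mathds{E}^{x,\mu} \left| \Xl - x \right|_e^2 = \int_0^\infty \mathds{E}^{x,\mu} \left| X_t^{(c, \lambda l)} - x \right|_e^2 \, \mathpzc{l}(dl) = \int_0^\infty \frac{2c^2}{(\lambda l)^2} \left( \lambda l t - 1 + e^{-\lambda l t} \right) \mathpzc{l}(dl).
\end{align*}

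The key algebraic observation is the identity
\begin{align*}
\frac{1}{(\lambda l)^2} \left( \lambda l t - 1 + e^{-\lambda l t} \right) = \int_0^t dw_1 \int_0^{w_1} dw_2 \, e^{-\lambda l w_2},
\end{align*}
which is verified by carrying out the two inner integrations explicitly. Substituting this and applying Tonelli's theorem (legitimate, since the integrand is non-negative) to interchange the mixing integral with the double time integral, one is left with the inner Laplace transform $\int_0^\infty e^{-\lambda w_2 l}\, \mathpzc{l}(dl)$. By the defining property \eqref{lamperti laplace} of the Lamperti distribution, this equals $\mathcal{M}_\nu(-(\lambda w_2)^\nu) = \mathcal{M}_\nu(-\lambda^\nu w_2^\nu)$, which yields the closed form \eqref{Model 1: mean squared  displacement} directly.

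For the large-$t$ asymptotics \eqref{Model 1: mean squared  displacement asintotico} the strategy is a two-fold Abelian argument. Using the power-law decay of the Mittag-Leffler function, namely $\mathcal{M}_\nu(-\lambda^\nu w^\nu) \sim (\lambda^\nu \Gamma(1-\nu))^{-1} w^{-\nu}$ as $w \to \infty$ for $\nu \in (0,1)$ (consistent with \eqref{asymmittag}), I would integrate once in $w_2$ to obtain an inner integral $\sim (\lambda^\nu \Gamma(1-\nu)(1-\nu))^{-1} w_1^{1-\nu}$, and then integrate once more in $w_1$ to get a leading term proportional to $t^{2-\nu}$. Each passage is legitimate because the integrand diverges and is asymptotically a pure power, so $\int_0^t f \sim \int_0^t g$ whenever $f \sim g$; the constant can be read off as $\varepsilon = 2c^2 / \bigl(\lambda^\nu \Gamma(1-\nu)(1-\nu)(2-\nu)\bigr)$. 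The borderline case $\nu = 1$ is treated separately and directly: there $\mathcal{M}_1(-\lambda w) = e^{-\lambda w}$, the inner integral converges to $\lambda^{-1}$, and one recovers the linear growth $2c^2 t/\lambda$, consistent with $t^{2-\nu}$ evaluated at $\nu = 1$.

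The computation is mostly bookkeeping: the only genuinely delicate point is justifying the interchange of the mixture integral and the expectation at the very first step, which rests on the conditional-Markov structure behind the equality in distribution of Theorem \ref{Model 1: teorema su Duhamel} (conditionally on $\mathcal{L}=l$ the process is exactly the Markovian flight of rate $\lambda l$). For the asymptotic part, the care lies in invoking the correct form of the Mittag-Leffler asymptotics and in checking that the two successive integrations preserve the asymptotic equivalence, which is a standard Karamata-type (or L'H\^opital) argument given that both integrals diverge.
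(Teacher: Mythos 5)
Your proof is correct, and while it starts exactly where the paper does --- the Lamperti-mixture representation of Theorem \ref{Model 1: teorema su Duhamel} combined with the Markovian formula \eqref{Markovian: mean squared  displacement random flight} (your interchange of the mixture integral and the expectation is unproblematic: $\left| \Xl - x \right|_e \leq ct$ makes the integrand bounded and nonnegative, so Tonelli suffices) --- it diverges from the paper in both halves. For the closed form \eqref{Model 1: mean squared  displacement}, the paper does not use your pointwise identity $\frac{1}{(\lambda l)^2}\l \lambda l t - 1 + e^{-\lambda l t}\r = \int_0^t \int_0^{w_1} e^{-\lambda l w_2}\, dw_2\, dw_1$; instead it takes Laplace transforms in $t$ of both sides of the claimed equality, shows that both equal $2c^2 s^{-2}\int_0^\infty (s+\lambda l)^{-1}\mathpzc{l}(dl)$ using \eqref{lamperti laplace}, and concludes by (implicit) injectivity of the Laplace transform; your direct Tonelli computation is more elementary and dispenses with that uniqueness step entirely. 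For the asymptotics \eqref{Model 1: mean squared  displacement asintotico}, the paper again works on the transform side: writing $U(t)$ for the mean squared displacement, it computes $\int_0^\infty e^{-st}U'(t)\,dt = 2c^2 s^{-1}\, s^{\nu-1}/(\lambda^\nu + s^\nu) \sim K' s^{\nu-2}$ as $s\to 0$ and invokes the Karamata Tauberian theorem \cite[Theorem XIII.5.2]{Feller}, which handles all $\nu\in(0,1]$ uniformly but leaves the constant unspecified; your two-fold Abelian argument (integrating the Mittag-Leffler tail twice, legitimate because the integrand is positive and both iterated integrals diverge) is equally rigorous and buys you the explicit constant $\varepsilon = 2c^2/\bigl(\lambda^\nu \Gamma(1-\nu)(1-\nu)(2-\nu)\bigr) = 2c^2/\bigl(\lambda^\nu\Gamma(3-\nu)\bigr)$, consistent with what the Tauberian theorem would produce since $\Gamma(3-\nu)=(2-\nu)(1-\nu)\Gamma(1-\nu)$; your separate treatment of $\nu=1$, where $\Gamma(1-\nu)$ degenerates, is the right precaution for your route and recovers $2c^2 t/\lambda$ as it should.
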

\begin{proof}
    Let us consider the mean squared  displacement of the Markovian random flight in Equation \eqref{Markovian: mean squared  displacement random flight}. Then by Theorem \ref{Model 1: teorema su Duhamel} we get
    \begin{align*}
        \mathds{E}^{x,\mu} \left | \Xl - x \right |_e^2  = 2c^2 \int_{0}^{\infty} \l \frac{t}{\lambda l}  - \frac{1}{\lambda^2 l^2} + \frac{e^{-\lambda t l}}{\lambda^2 l^2} \r  \mathpzc{l}(dl).
    \end{align*}
    Considering the Laplace transform in the time variable ($t \mapsto s$), we obtain, for any $s > 0$,
    \begin{align*}
         2c^2 \int_0^{\infty}\l \frac{1}{\lambda l s^2}  - \frac{1}{\lambda^2 l^2 s} + \frac{1}{\lambda^2 l^2 (s + \lambda l)} \r  \mathpzc{l}(dl) = 2c^2 \frac{1}{s^2} \int_0^{\infty} \frac{1}{s + \lambda l} \mathpzc{l}(dl) .
    \end{align*}
    By applying the Laplace transform to the second member of Equation \eqref{Model 1: mean squared  displacement} and using the definition of Lamperti distribution \eqref{lamperti laplace}, one has
    \begin{align*}
        2c^2 \frac{1}{s^2} \int_0^{\infty} e^{-sw_2} \mathcal{M}_{\nu} \l -\lambda^{\nu} w_2^{\nu} \r &= 2c^2 \frac{1}{s^2} \int_0^{\infty} e^{-sw_2} \int_0^{\infty} e^{-\lambda w_2 l} \mathpzc{l}(dl) \\
        &= 2c^2 \frac{1}{s^2} \int_0^{\infty} \frac{1}{s + \lambda l} \mathpzc{l} (dl),
    \end{align*}
    and the first part of the thesis follows. 

Now, denote $U(t) = \mathds{E}^{x,\mu} \left | \Xl - x \right |_e^2$ and note that
\begin{align}
    \int_0^{+\infty} e^{-s t} U'(t)dt = \, & 2c^2\frac{1}{s} \frac{s^{\nu-1}}{\lambda^\nu + s^\nu} \\ &\sim K's^{\nu-2} \qquad \text{as } s \to 0
\end{align}
for $K'>0$, where we used the well-known Laplace transform of the Mittag-Leffler survival function
\begin{align}
    \int_0^\infty e^{-st} \mathcal{M}_\nu (-\lambda^\nu t^\nu) dt \, = \, \frac{s^{\nu-1 }}{\lambda^\nu + s^\nu}.
\end{align}
It follows from the Karamata Tauberian Theorem \cite[Theorem XIII.5.2]{Feller} that
\begin{align}
      \mathds{E}^{x,\mu} \left | \Xl - x \right |_e^2 \sim K t^{2-\nu}
\end{align}
for $K>0$, as claimed.
\end{proof}

From Equation \eqref{Model 1: mean squared  displacement asintotico} we conclude that the random flight is super-diffusive for $\nu < 1$. Instead, for $\nu = 1$ we get the asymptotic behaviour of the Markovian random flight. Note that the asympotic behaviour is the same as observed in \cite{Barkai}, where the authors consider a one-dimensional transport process having power-law tailed flight times (even if  independent, contrary to what happens in our model).

\subsection{Mittag-Leffler distribution of obstacles} \label{Sec: Mittag-Leffler distribution of obstacles}
We preliminarly recall some basic notions on point processes.  We follow the notation of \cite{kallenberg}; another standard reference on point processes is \cite{Daley}.

Let $\mathcal{S} \in \mathcal{B} \l \mathbb{R}^3 \r$ and denote $\l \mathcal{S}, \mathcal{B} (\mathcal{S}), \mu \r$ a measure space which is finite on Borel subsets with finite Lebesgue measure. We denote by $\Pi$ a point process in $\mathcal{S}$, namely an integer-valued random measure. In particular, $\Pi$ is a locally finite kernel from a probability space $\l \Omega, \mathcal{A}, P \r$ into $\l \mathbb{R}^3, \mathcal{B} \l \mathbb{R}^3 \r \r$, i.e. a mapping $\Pi:(\Omega \times \mathcal{B}(\mathcal{S})) \mapsto \N$, such that $\Pi (\omega, B)$ is a locally finite integer-valued measure for fixed $\omega$ and an integer-valued random variable for fixed $B$.

From now on, let $\mu (B)= \rho |B|$, where $\rho \in (0,\infty)$ and $|B|$ denotes the Lebesgue measure of the Borel set $B$.

We recall that a point process is said to be a Poisson point process  with parameter $\rho \in (0, \infty) $, if, for any collection of mutually disjoint, finite  Borel sets $\{B_j,\ j \in\{1,\ldots, n\}\}$,  and any choice of non-negative integers $\{k_j,\ j \in\{1,\ldots, n\}\},\ n\geq 1$,  we have 
\begin{align}
    P\l \bigcap_{j = 1}^n \lg \mathcal{N} (B_j)= k_j \rg\r = e^{-\sum_{j=1}^n \rho |B_j|}\prod _{j=1}^n  \frac{\l \rho |B_j |\r ^{k_j}}{k_j !} \label{Inhomogeneous Poisson field as a special case of fractional poisson field}
\end{align}
where $\mathcal{N}(B)$ is the random variable $ \omega \mapsto \Pi (\omega, B)$, i.e. the random number of points inside $B$. Equation \eqref{Inhomogeneous Poisson field as a special case of fractional poisson field} indicates that $\mathcal{N}(B_j)$, $j=1, \dots, n$, are independent random variables, each having Poisson distribution with mean $\rho |B_j|$. 

We are ready to introduce a new point process, whose distribution is described, in addition to $\rho$, by a fractional parameter $\nu \in (0,1]$, such that  the Poisson case is re-obtained when $\nu =1$.   
We use the notation $\ef (B)$ to indicate the random number of points in $B\in\mathcal{B}(\mathcal{S})$.
Moreover, for the Mittag-Leffler function $\mathcal{M}_\nu(\cdot)$, we will use the notation in Equations \eqref{Mittag Leffler function} and \eqref{Derivative of Mittag Leffler function}.

\begin{defin} \label{Fractional Poisson field}
    A point process $\Pi$  is said to be a Mittag-Leffler point process 
    of parameters $\rho \in (0, \infty)$ and $\nu \in (0,1]$ if, for any collection of mutually disjoint, finite Borel sets $\{B_j,\ j \in\{1,\ldots, n\}\}$,  and any choice of non-negative integers $\{k_j,\ j \in\{1,\ldots, n\}\},\ n \geq 1$,  we have 
    \begin{align}
        & P\l \bigcap_{j = 1}^n \lg \ef (B_j)= k_j \rg \r 
        = (-1)^{k}\mathcal{M} _\nu^{ (k)  } \l -\l \sum _{j=1}^n \rho|B_j| \r^\nu \r\prod_{j = 1}^n \frac{(\rho |B_j|)^{k_j}}{k_j!}\ 
        \label{Fractional Poisson field formula}
    \end{align}
    where  $k := \sum_{j = 1}^n k_j$.
\end{defin}

In Appendix \ref{Appendix: Existence of Mittag-Leffler point process}, the existence and uniqueness of the point process of Definition \ref{Fractional Poisson field} are rigorously established. We further note that this point process has relationship with the one studied in \cite{Leonenkofield}.

At a first glance, from a purely formal perspective, the new point process is obtained by replacing the exponential function in Equation \eqref{Inhomogeneous Poisson field as a special case of fractional poisson field} with the Mittag-Leffler function in Equation \eqref{Fractional Poisson field formula}. This implies the loss of independence among the number of points in disjoint sets. 

Let us observe that for any finite Borel set $B \in \mathcal{B}(\mathcal{S})$, the random variable $\ef (B)$ has distribution
\begin{align} \label{un solo insieme}
        P(\ef (B)= k)= (-1)^k\mathcal{M} _\nu ^{(k)} (-  \rho^{\nu}|B|^\nu) \frac{(\rho|B|)^k}{k!} \qquad k\in\N , 
\end{align}
as a special case of Equation \eqref{Fractional Poisson field formula} for $n = 1$.

We further observe that, from a physical perspective, the Mittag-Leffler point process exhibits both similarities and differences compared to the Poisson case. Specifically, both are spatially homogeneous, meaning they are invariant under space translations. This follows from the fact that the distribution in Equation \eqref{Fractional Poisson field formula} only  depends  on the volumes of the $B_j$s.

On the other hand, the new point process is suitable to describe rarefied gases. To illustrate this point, we present the distribution of the random distance $ D_{\nu} $ between a given point $ y \in \R^3 $ and its nearest point of $\Pi$. Such a distribution does not depend on $y$ by space homogeneity. Note that $ y $ is not required to be a point of $\Pi$. 

It will become clear that, for $x\to \infty$, the survival function $P(D_{\nu}>x)$ decays to zero as a power law, which is a substantially different behaviour from the Poisson case.
Indeed, observe that $ D_{\nu} > x $ if and only if there are no points within the ball of radius $ x $ centered at $y$, denoted as $ B_x(y) $, for all $x\geq0$.
Hence, in the Poisson case, one has
\begin{align}
    P\l D_1 > x \r
    &= P \l \mathcal{N}\l B_{x}(y) \r = 0 \r = e^{-\rho \frac{4}{3}\pi x^3}, \notag 
\end{align}
while, in the Mittag-Leffler case, one has
\begin{align}
    P\l D_{\nu} > x \r &= P\l \ef\l B_{x}(y) \r = 0 \r = \mathcal{M}_{\nu}\l- \rho^{\nu} \l \frac{4}{3}\pi x^3 \r^{\nu} \r, \qquad \nu \in (0,1) . \label{Teoria cinetica: distribuzione del primo vicino in ML field}
\end{align}

Equation \eqref{Teoria cinetica: distribuzione del primo vicino in ML field} gives that $P\l D_{\nu} > x \r \sim 1/x^{3\nu}$ as $x\to\infty$.
This is consistent with the fact that the Mittag-Leffler model is  suited for situations where the gas exhibits a sparser structure, with large empty regions among obstacles.
It means that the moving particles in the gas are more likely to experience longer free paths or travel greater distances without interacting. In contrast, the Poisson model describes a gas where interactions or collisions occur more frequently, meaning the empty regions are smaller.

\begin{os} \label{Remark Intensity of parameter rho}
    In view of the main theorem \ref{Theorem BG boltzmann grad}, where the case of large $\rho$  will be considered, we observe that  $\rho$ controls the probability to have at least one point in a small region $B$. Indeed, we have   \begin{align}  
        P(\ef(B)\geq  1) & = 1-P(\ef(B)=0) \notag \\  & = 1-\mathcal{M}_\nu (-\rho ^\nu |B|^\nu) \notag \\     & = \frac{\rho ^\nu}{\Gamma (1+\nu)} |B|^\nu + o(|B|^\nu) \notag \qquad \nu \in (0,1].
    \end{align}
    We further observe that in the Poisson case, $\rho$ also has the meaning of expected number of points per unit volume; indeed, for the Poisson distribution, we have $\mathbb{E}\mathcal{N} (B)= \rho$ if $|B|=1$. In contrast, the expected number of points per unit volume in the Mittag-Leffler case is infinite  for each $\rho$ when $\nu\in(0,1)$.
\end{os}

Consider \(\Pi\) as in Definition \ref{Fractional Poisson field}. We now aim to determine the so-called Jánossy measure (see \cite[Section 4.3]{Penrose}) associated to $\Pi$.

\begin{lem} \label{Janossy measure for Mittaf-Leffler field}

Let us consider ${B}\in\mathcal{B}(\R^3),\ |B| <\infty$. Let $B$ contain $\ef (B)$ points of $\Pi$, which are located at the random positions $C_1, C_2, \dots, C_{\ef (B)}$. Then 
\begin{align*}
    P(C_1 \in dc_1, \dots, C_n\in dc_n, \ef (B)=n)=\frac{1}{n!}  \rho^n  \, (-1)^n \mathcal{M}_\nu^{(n)} \l -\rho^\nu |B|^{\nu} \r \, dc_1 \cdots dc_n,
\end{align*}
for each $n\geq1$.
\end{lem}

\begin{proof}
For each $n\geq1$, consider a sequence $\lg A_j,\ j\in\{1,\ldots,n\} \rg$ of mutually disjoint Borel sets s.t. $\cup_jA_j \subseteq {B}$. Then, using combinatorics arguments, we get
\begin{align*}
    P \l  C_1 \in A_1, \cdots C_{n} \in A_n , \ef (B) = n \r  &= \frac{1}{n!} P \l  \bigcap_{j = 1}^n\lg  \ef \l A_j \r = 1 \rg,\ \ef \l B\setminus \cup_{j=1}^n A_j \r = 0  \r \\
    &= \frac{1}{n!}\rho |A_1| \cdots \rho |A_n|  (-1)^n \mathcal{M}_\nu^{(n)} \l -  \rho^{\nu} |B|^\nu \r \\
    &= \frac{1}{n!} \rho^n   (-1)^n \mathcal{M}_\nu^{(n)} \l -\rho^\nu |B|^{\nu} \r  \int_{A_1}dc_1 \cdots \int_{A_n} dc_n \\
    &=  \frac{1}{n!}  \int_{A_1} \cdots \int_{A_n} \rho^n  \, (-1)^n \mathcal{M}_\nu^{(n)} \l -\rho^\nu |B|^{\nu} \r \, dc_1 \cdots dc_n,
\end{align*}
where in the second equality we used Equation  \eqref{Fractional Poisson field formula}.
Hence, the law of $(C_1,\ldots, C_n)$ is absolutely continuous and the thesis follows.

\end{proof}

\subsection{The Boltzmann-Grad limit} \label{Sec: The Boltzmann-Grad limit modello 1}
We now describe the new model of motion among random obstacles.
The obstacles are assumed to be identical hard spheres of radius $R$, whose centres are randomly distributed according to the Mittag-Leffler point process  \textcolor{black}{of parameters $\rho$ and $\nu$}, as in Definition \ref{Fractional Poisson field}. Some obstacles may overlap as it happens in the classical Gallavotti's model \cite{Gallavotti}.
Outside the system of obstacles, the particle moves along a straight line with constant speed $c>0$. When reaching the surface of an obstacle, an elastic collision occurs. This means that the particle is specularly reflected and the post-collisional speed is again equal to $c$. Each obstacle can be hit more than once, which leads to a long memory tail in the process. We now analyse the system in a Boltzmann-Grad type limit. 

\begin{os}
\textcolor{black}{The  Boltzmann-Grad limit consists in letting $R\to 0$ and simultaneously scaling the obstacle distribution so that, for any positive Borel set $B$, $\mathcal{N}_R ^{\nu} (B) \to \infty$, being  $\mathcal{N}_R ^{\nu} (B)$  the number of obstacle centres in  $B$. According to Spohn's results (see \cite{Spohn} and also Proposition 2.3 and Corollary 2.4 in \cite{Spohn_review}), the limiting random flight is Markovian if and only if the following equivalent conditions on the obstacle distribution are verified (we here translate Spohn's conditions in our language):}
\begin{enumerate}
\item \textcolor{black}{For any Borel set $B$, the scaled number of obstacles centers $R^2 \ef_R (B)$ converges in probability to a constant, as $R\to 0$.}

\item \textcolor{black}{For any collection of disjoint Borel sets $B_1, B_2, \dots, B_n$, the scaled variables $R^2 \ef_R (B_1)$, $R^2 \ef_R (B_2)$, \dots, $R^2 \ef_R (B_n)$ are independent in the limit $R\to 0$}.
\end{enumerate}
\textcolor{black}{Our Mittag-Leffler point process does not satisfy these conditions. Indeed, concerning point (1), the Laplace transform of $ R^2 \ef_R (B)$ is 
$$\mathbb{E} e^{-z R^2 \ef_R (B)} = \mathcal{M}_\nu \l  -\frac{\rho ^\nu}{R^{2\nu}} |B|^\nu \l 1-e^{-zR^2}\r^\nu \r \overset {R\to 0}{\longrightarrow} \mathcal{M}_\nu \l    -\rho ^\nu |B|^\nu z^\nu\r$$
where we assumed a scaling of the type $\rho \to \rho/R^2$, giving $\ef_R (B) \to \infty$.
Therefore we see that $ R^2 \ef_R (B)$ converges  to a  random variable, which reduces to a constant only in the case $\nu =1$. Concerning point $(2)$, we stress that our Mittag-Leffler point process is defined in such a way that the number of points in disjoint sets are dependent and this property is maintained in the limit $R \to 0$. Indeed, using the joint Laplace transform, we get
\begin{align}
\mathbb{E} e^{-z_1 R^2 \ef_R (B_1)\dots -z_n R^2 \ef_R (B_n)} = &\mathcal{M}_\nu \l  -\frac{\rho ^\nu}{R^{2\nu}} \l \sum _{j=1}^n |B_j| \l 1-e^{-z_jR^2}\r  \r^\nu \r \notag  \\
\overset {R\to 0}{\longrightarrow} &\mathcal{M}_\nu \l    -\rho ^\nu \l \sum _{j=1}^n   |B_j|z_j \r^\nu \r
\end{align}
which is factorisable only in the Poisson case corresponding to $\nu=1$}.
\end{os}

\textcolor{black}{In our notation, the above kinetic procedure can be easily expressed in the following way}:   let the obstacle radius $R$ approaches zero while the parameter $\rho$, representing the intensity of the process as described in Remark \ref{Remark Intensity of parameter rho}, diverges to infinity \textcolor{black}{(which is to say that $\mathcal{N} ^\nu(B) \to \infty$ for each Borel set $B$)}; these limits are taken in such a way that the product $R^2 \rho$ remains constant, that is

\begin{align} \label{BG limit}
    \rho \to \infty \qquad R \to 0 \qquad  \rho c \pi R^2 \to \lambda \in (0,\infty).
\end{align}

Without loss of generality, we shall put $\rho= \lambda / c \pi R^2$ and consider the one variable limit as $R\to 0$.

The constraint in Equation \eqref{BG limit} can be understood by studying the distribution of the free flight time. In Gallavotti's model, i.e. when $\nu =1$, for $R\to 0$ the distribution of the free flight time converges to an exponential distribution with mean $\lambda ^{-1}$. Hence, condition  \eqref{BG limit} means that the limit procedure is such that  the free flight time of the particle maintains a strictly positive, finite mean.

In this regard, in Lemma \ref{Distribuzione free flight time o tempo di volo} we shall see that this is not true if $\nu \in (0,1)$, given that  the free flight time has, in the present model, infinite mean both before and after the limit. 
Indeed, in this case the limit  \eqref{BG limit} simply ensures that the free flight time is almost surely positive and finite, meaning that it takes values in the open interval $(0,\infty)$ almost surely, and then it has a proper distribution.

So, suppose that a particle, which is initially located at $x\in \mathbb{R}^3$, is shot towards an arbitrary direction $v\in S^2$. Let   $T^R$  be the free flight time, i.e.  the first hitting time with the system of obstacles.
\begin{lem} \label{Distribuzione free flight time o tempo di volo}
The distribution of the free flight time $T^R$ has a discrete component in $0$, such that 
\begin{align*} 
P\l T^R =0 \r=1- \mathcal{M} _\nu \l -\rho ^\nu \l \frac{4}{3} \pi R^3 \r^\nu \r
\end{align*}
and an absolutely continuous component on  $(0,\infty)$, such that
\begin{align*} 
P\l T^R >t \r  = \mathcal{M} _\nu \l- \rho ^\nu \l\pi R^2 ct + \frac{4}{3} \pi R^3\r ^\nu \r \qquad t>0. 
\end{align*}
\end{lem}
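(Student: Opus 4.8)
The plan is to reduce both assertions to \emph{void probabilities} of the Mittag-Leffler point process over explicitly computable regions, exploiting the fact that before the first collision the particle travels along a straight line, so that the whole problem becomes purely geometric. The key analytic input is the single-set law \eqref{un solo insieme}, which for $k=0$ reads $P(\ef(B)=0)=\mathcal{M}_\nu(-\rho^\nu|B|^\nu)$ for any Borel set $B$ with $|B|<\infty$; everything else is elementary geometry in $\R^3$.

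For the discrete component at the origin, I would observe that $T^R=0$ precisely when the starting point $x$ already lies in the union of obstacles $\bigcup_i B_R(C_i)$, i.e. when at least one centre of $\Pi$ falls in the ball $B_R(x)$, whose volume is $\tfrac{4}{3}\pi R^3$ (the boundary case $\mathrm{dist}=R$ being a null event). Hence
\begin{align*}
P(T^R=0)=P\l\ef(B_R(x))\geq 1\r=1-P\l\ef(B_R(x))=0\r=1-\mathcal{M}_\nu\l-\rho^\nu\l\tfrac{4}{3}\pi R^3\r^\nu\r,
\end{align*}
which is the claimed atom.

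For $t>0$ the central step is the geometric characterisation of $\{T^R>t\}$. Since $T^R$ is the first hitting time, on $\{T^R>t\}$ no collision has occurred in $[0,t]$, so the trajectory up to time $t$ is the straight segment $\gamma_t:=\{x+cuv:u\in[0,t]\}$ of length $ct$. The particle meets the obstacle system during $[0,t]$ if and only if $\gamma_t$ intersects some ball $B_R(C_i)$, equivalently if and only if some centre $C_i$ lies within distance $R$ of $\gamma_t$; thus $\{T^R>t\}$ coincides with the event that $\Pi$ has no point in the spherocylinder (the $R$-neighbourhood of the segment)
\begin{align*}
\Sigma_t^R:=\ll y\in\R^3:\mathrm{dist}(y,\gamma_t)\leq R\rr.
\end{align*}
This region is a cylinder of radius $R$ and height $ct$ capped by two hemispheres of radius $R$, so $|\Sigma_t^R|=\pi R^2 ct+\tfrac{4}{3}\pi R^3$. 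Applying the void probability once more yields
\begin{align*}
P(T^R>t)=P\l\ef(\Sigma_t^R)=0\r=\mathcal{M}_\nu\l-\rho^\nu\l\pi R^2 ct+\tfrac{4}{3}\pi R^3\r^\nu\r,
\end{align*}
as asserted. Since $t\mapsto\pi R^2 ct+\tfrac43\pi R^3$ is smooth and strictly increasing, this survival function is absolutely continuous on $(0,\infty)$, which gives the absolutely continuous component; moreover letting $t\downarrow 0$ recovers $\mathcal{M}_\nu(-\rho^\nu(\tfrac43\pi R^3)^\nu)=P(T^R>0)$, consistent with the atom computed above.

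The main obstacle, and the only point genuinely requiring care, is the geometric equivalence in the third paragraph: one must argue that the first-hitting event depends only on the straight pre-collision path (so that specular reflections play no role up to $T^R$), that overlapping obstacles cause no difficulty because only the union $\bigcup_i B_R(C_i)$ enters the description, and that tangential (grazing) incidences form a null event not affecting the probabilities. Once this reduction is justified, both formulas follow immediately from the void probabilities of $\Pi$ and the elementary volume of the spherocylinder.
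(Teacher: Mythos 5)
Your proposal is correct and follows essentially the same route as the paper: both arguments reduce the two claims to void probabilities of the Mittag-Leffler point process via Equation \eqref{un solo insieme}, using the ball $B_R(x)$ of volume $\tfrac{4}{3}\pi R^3$ for the atom at zero and the capped cylinder (spherocylinder) of volume $\pi R^2 ct + \tfrac{4}{3}\pi R^3$ around the straight pre-collision segment for $t>0$. Your additional remarks on grazing incidences, overlapping obstacles, and the smoothness of the survival function are sound refinements of the same argument rather than a different approach.
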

\begin{proof}
    Let $t>0$. Then $T^R$ is greater than $t$ if and only if none of the obstacle centres lies in the set
    \begin{align}
        \theta (x,v,t) := \left\{y\in \mathbb{R}^3 : | y-(x+cvs)|_e \leq R,  \,\,\,  s\in [0,t] \right\}
    \end{align}
i.e. a cylinder of height $ct$ around the particle trajectory, capped by two hemispheres of radius $R$. So, for $t>0$, using formula \eqref{un solo insieme}, we have
\begin{align*} 
P\l T^R >t \r \notag &= P\l\ef (\theta (x,v,t)) =0 \r \\ &= \mathcal{M} _\nu \l- \rho ^\nu \l\pi R^2 ct + \frac{4}{3} \pi R^3\r ^\nu \r .
\end{align*}
The discrete mass at zero corresponds to the possibility that the starting point $x$ is located inside some obstacles:
\begin{align*} 
    P\l T^R =0 \r \notag &= P\l \ef (B_R(x))\geq 1 \r \\ \notag 
    &= 1-P \l  \ef (B_R(x)) =0 \r \\
    &=1- \mathcal{M} _\nu \l -\rho ^\nu \l \frac{4}{3} \pi R^3 \r^\nu \r,
\end{align*}
where $B_R(x)$ is the ball of radius $R$ centered at $x$.
\end{proof}

From Lemma \ref{Distribuzione free flight time o tempo di volo}, we observe that the distribution of $T^R$ does not depend either on the initial position $x$ or on the initial direction $v$, because the Mittag-Leffler random field is homogeneous and isotropic. Moreover, for large $t$ we have $P\l T^R > t\r \sim K t^{-\nu},\ K > 0$, hence the free flight time $T^R$ has infinite expectation.

\begin{os}
    From Lemma \ref{Distribuzione free flight time o tempo di volo}, we observe that  for $R\to 0$, the discrete component in $0$ vanishes and the distribution of $T^R$ converges to a Mittag-Leffler distribution, i.e.
    \begin{align}
        \lim _{R\to 0} P\l T^R>t \r=  \mathcal{M} _\nu \l-\lambda ^\nu t^\nu \r \qquad t\geq 0.
    \end{align}
    As mentioned before, after the limit \eqref{BG limit} the free flight time has a proper distribution with infinite expectation.
\end{os}

We are now ready to state the convergence theorem.

We denote by $ \mathcal{X}_t^{(1)}$ the position of the particle at time $t$ and by $ \mathcal{V}_t^{(1)}$ the unit velocity vector at time $t$, i.e. the direction of the particle at time $t$. We call $\l \X^{(1)}, \V^{(1)} \r := \lg \l \Xt, \Vt \r,\ t\geq0 \rg$ the Lorentz process in $\mathbb{R}^3 \times S^2$, with initial data given by the random variables $\l \X_0^{(1)}, \V_0^{(1)} \r$ in $\mathbb{R}^3 \times S^2$. Recall that the Lorentz process depends on the parameters of the Mittag-Leffler random field $\rho$ and $\nu$, and on the size of the obstacles $R$. However, to simplify the notation, we will not make this dependence explicit. On the other hand, the superscript indicates that it refers to the first model, while the second one will be discussed in the last section. For $\nu=1$, the obstacles have a Poisson distribution, and such Lorentz process coincides with Gallavotti's model. In the following theorem we shall use the notation 
\begin{align*}
    P^{(x,v)} \l \Xt\in dx', \Vt \in dv' \r := P \l \Xt\in dx', \Vt \in dv' \ | \ \mathcal{X}_0 = x, \mathcal{V}_0 = v \r.
\end{align*}

\begin{te} \label{Theorem BG boltzmann grad}
Consider the Lorentz process $\l \X^{(1)}, \V^{(1)} \r$ defined above.
Under the Boltzmann-Grad limit \eqref{BG limit}
the Lorentz process converges, in the sense of the single time distribution, to the random flight $\l {X}_{(1)}^\nu, {V}_{(1)}^\nu \r$ defined in Section \ref{Subsection: Random flight Model 1}, i.e.,
\begin{align*}
    P^{(x,v)} \l \Xt\in dx', \Vt \in dv' \r \to \mathds{P}^{(x,v)} \l X_{(1)}^\nu (t) \in dx', V_{(1)}^\nu (t) \in dv' \r
\end{align*}
weakly, for all $t>0$ and for all $(x,v) \in \mathbb{R}^3 \times S^2$.
\end{te}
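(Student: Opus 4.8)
The plan is to reduce the statement to the classical Gallavotti theorem (the $\nu=1$ case, \cite{Gallavotti}) by exhibiting the Mittag-Leffler point process as a \emph{mixed Poisson} (Cox) process, and then averaging the Gallavotti limit against the mixing law. The guiding point is that the hard analytic content of any Lorentz Boltzmann-Grad limit, namely the vanishing of recollisions with previously visited obstacles, is already contained in Gallavotti's result; by conditioning on the mixing variable I can import it wholesale rather than reprove it.

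First I would establish the Cox representation. Starting from the Lamperti Laplace transform \eqref{lamperti laplace}, $\mathcal{M}_\nu(-z^\nu) = \int_0^\infty e^{-zl}\,\mathpzc{l}(dl)$, and differentiating $k$ times in $z$ using \eqref{Derivative of Mittag Leffler function}, one gets the identity
\begin{align*}
(-1)^k \mathcal{M}_\nu^{(k)}(-z^\nu) = \int_0^\infty l^k e^{-zl}\,\mathpzc{l}(dl).
\end{align*}
Substituting $z = \rho\sum_{j}|B_j| = \rho\,|\cup_j B_j|$ for disjoint $B_j$ into the defining formula \eqref{Fractional Poisson field formula}, and using $l^{\sum_j k_j} = \prod_j l^{k_j}$ together with $e^{-zl} = \prod_j e^{-\rho l |B_j|}$, one obtains
\begin{align*}
P\l \bigcap_{j=1}^n \lg \ef(B_j) = k_j \rg \r = \int_0^\infty \prod_{j=1}^n e^{-\rho l |B_j|}\,\frac{(\rho l |B_j|)^{k_j}}{k_j!}\,\mathpzc{l}(dl).
\end{align*}
The integrand is exactly the finite-dimensional count distribution of a Poisson point process of intensity $\rho l$. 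Since a point process is determined by these finite-dimensional distributions, this shows that the Mittag-Leffler point process of parameters $(\rho,\nu)$ equals in law a Poisson point process of random intensity $\rho\mathcal{L}$, with $\mathcal{L}$ a Lamperti variable of parameter $\nu$ independent of the Poisson randomization; as $\mathpzc{l}$ is supported on $(0,\infty)$, this random intensity is almost surely strictly positive and finite.

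Next I would condition on $\mathcal{L}=l$. The Lorentz dynamics (free motion plus specular reflection) are a deterministic functional of the obstacle configuration and the initial datum $(x,v)$, so conditionally on $\mathcal{L}=l$ the process $\l \Xt, \Vt \r$ is exactly a Gallavotti Lorentz process among Poisson obstacles of intensity $\rho l$. Adopting the paper's convention $\rho = \lambda/(c\pi R^2)$, the conditional intensity satisfies $(\rho l)\,c\pi R^2 = \lambda l$ identically in $R$, so letting $R\to 0$ is a genuine Boltzmann-Grad scaling toward the target rate $\lambda l$. Gallavotti's theorem then yields, for each fixed $l>0$, weak convergence of the conditional single-time law to the law of the Markovian random flight $\l X^{(c,\lambda l)}_t, V^{(c,\lambda l)}_t\r$; in particular the recollision estimates are inherited from \cite{Gallavotti} and need not be redone.

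Finally I would remove the conditioning. Writing $P^{(x,v)}_R$ for the Lorentz single-time law and $P^{(x,v)}_{R,l}$ for its version conditional on $\mathcal{L}=l$, the Cox representation gives $P^{(x,v)}_R = \int_0^\infty P^{(x,v)}_{R,l}\,\mathpzc{l}(dl)$. Testing against a bounded continuous $f$ on $\R^3\times S^2$, the previous step yields $\int f\,dP^{(x,v)}_{R,l}\to \sgM_t^{(c,\lambda l)}f(x,v)$ for each $l>0$; since $\bigl|\int f\,dP^{(x,v)}_{R,l}\bigr|\leq \|f\|$ uniformly in $R$ and $l$, and $\mathpzc{l}$ is a probability measure, bounded convergence lets me pass the limit inside, giving
\begin{align*}
\int f\,dP^{(x,v)}_R \;\longrightarrow\; \int_0^\infty \sgM_t^{(c,\lambda l)}f(x,v)\,\mathpzc{l}(dl) \;=\; \opmoduno_t f(x,v) \;=\; \mathds{E}^{(x,v)}\lq f\l \Xl, \Vl \r\rq,
\end{align*}
where the middle identity is precisely the mixture representation of $\opmoduno_t$ obtained in the proof of Theorem \ref{Model 1: teorema su Duhamel}. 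This is the claim. The main obstacle is the conceptual step of justifying the Cox representation at the level of the \emph{dynamics} (not merely the marginal counts), so that conditioning genuinely reduces to Gallavotti's Poissonian setting; once that is secured, the interchange of limit and integral is painless because probabilities are uniformly bounded and the mixing measure is a probability.
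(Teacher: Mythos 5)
Your proposal is correct, but it takes a genuinely different route from the paper's proof of this theorem. The paper argues directly: it splits the expectation over the event of no recollisions and its complement, computes the no-recollision term from scratch using the J\'anossy densities of the Mittag-Leffler point process (Lemma \ref{Janossy measure for Mittaf-Leffler field}), performs Gallavotti's change of variables from obstacle centres to collision times and outgoing directions, passes to the limit via the domination estimates of Appendix \ref{Appendix: appendice sulla convergenza dominata}, recognizes the resulting series as the Duhamel expansion of Theorem \ref{Model 1: teorema su Duhamel}, and finally kills the recollision term by testing with $h \equiv 1$ and Portmanteau. You instead condition on the mixing variable: your Cox representation of the Mittag-Leffler field as a Poisson process of random intensity $\rho\mathcal{L}$ is sound (and is in fact established in the paper itself, Lemma \ref{lemmarestriction} of Appendix \ref{Appendix: Existence of Mittag-Leffler point process}, by a Laplace-functional argument close to your derivative identity), the Lorentz dynamics are indeed a deterministic functional of the configuration so the conditional process is exactly Gallavotti's with target rate $\lambda l$, and bounded convergence plus the mixture identity \eqref{mixture Model 1} closes the argument. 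Notably, this is precisely the strategy the paper itself adopts for the second model in Section \ref{Boltzmann-Grad approximation and anomalous diffusion model 2}, where the Lamperti variable enters through the speed rather than the intensity; so your proof transplants that technique to Model 1. The trade-off: your route is shorter and modular, but it leans entirely on the exact statement and strength of the Poissonian Boltzmann--Grad theorem for every intensity $\lambda l$ (Gallavotti's original argument, completed rigorously by Spohn \cite{Spohn}), whereas the paper's direct computation is self-contained in the Mittag-Leffler setting, produces the explicit finite-$R$ expansion \eqref{densita processo di Lorentz} showing how the Mittag-Leffler derivatives replace the exponential weights, and gives an intrinsic proof that the recollision probability vanishes. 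Two small points you should make explicit for completeness: the disintegration $P_R^{(x,v)} = \int_0^\infty P^{(x,v)}_{R,l}\,\mathpzc{l}(dl)$ requires a regular version of the conditional law, which the Cox kernel provides by construction; and the atom of $T^R$ at zero (the particle starting inside an obstacle, Lemma \ref{Distribuzione free flight time o tempo di volo}) is inherited from the conditional Gallavotti model and has conditional probability $1 - e^{-\rho l \frac{4}{3}\pi R^3} \to 0$ for each fixed $l$, so it causes no harm.
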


\begin{proof}
Let us write $E^{(x,v)}$ for the expectation under $P^{(x,v)}$. Then for suitable functions $h(x,v)$, we can write
\begin{align*}
E^{(x,v)} h\l  \Xt,  \Vt\r  := \int _{S^2} \int _{B_{ct}(x)} h(x', v') P^{(x,v)} \l \Xt\in dx', \Vt \in dv' \r,
\end{align*}
where $B_{ct}(x)$ indicates the ball centered in $x$ of radius $ct$. Indeed, due to finite speed of the particle, the distribution of $\Xt$ is supported on $B_{ct}(x)$.

To get the statement, using the Portmanteau lemma, we can prove that
\begin{align}
    \lim _{R\to 0} E^{(x,v)}h\l \Xt, \Vt \r  = \mathds{E}^{x,v} h \l\Xl, \Vl\r  \qquad \forall \, \,\,h\in C_b(\mathbb{R}^3\times S^2),  \notag 
\end{align}
where $C_b(\mathbb{R}^3\times S^2)$ is the set of continuous bounded functions from $(\mathbb{R}^3\times S^2)$ to $\R$.

Among all possible paths, some allow each obstacle to be hit at most once, while others lead to recollisions. Therefore, we can partition the expectation as
\begin{align}
    E^{(x,v)} h\l \Xt, \Vt \r = E^{(x,v)}h\l \Xt, \Vt \r \mathds{1}_{\mathpzc{A}} + E^{(x,v)}h\l \Xt, \Vt\r\mathds{1}_{\mathpzc{A}^c},  \label{scomposizione della probabilita}
\end{align}
where $\mathpzc{A}$ is the event \textit{absence of recollisions}.

We first compute the term $E^{(x,v)} \mathds{1}_{\mathpzc{A}} h\l  \Xt,  \Vt \r$. 
To this aim, we use the decomposition
\begin{align*}
    \mathds{1}_{\mathpzc{A}} = \mathds{1}_{\mathpzc{A}} \l \mathds{1}_{\left\{T^R = 0\right\}} + \mathds{1}_{\left\{T^R > t\right\}} + \mathds{1}_{\left\{T^R \in(0,t)\right\}} \r,
\end{align*}
where $T^R$ is the first free flight time.
Using the distribution of $T^R$ in Lemma \ref{Distribuzione free flight time o tempo di volo} we have 
\begin{align*}
    \E\lq \mathds{1}_{\mathpzc{A}}\mathds{1}_{\left\{T^R = 0\right\}} h \l \Xt,  \Vt \r  \rq= h(x,v)\left[ 1- \mathcal{M}_\nu \l -\rho ^\nu \l \frac{4}{3} \pi R^3 \r^\nu \r \right]
\end{align*}
and
\begin{align*}
    \E \lq \mathds{1}_{\mathpzc{A}}\mathds{1}_{\left\{T^R > t\right\}}h\l  \Xt,  \Vt \r \rq  = \, h(x+cvt,v) \mathcal{M} _\nu \l - \rho ^\nu \l \pi R^2 ct + \frac{4}{3} \pi R^3 \r^\nu \r. 
\end{align*}

Finally, we consider the case of $T^R\in(0,t)$. 
In this time interval, the particle can hit the obstacles whose centres lie in the ball $B_{ct + R}\l\mathcal{X}_0\r$.
Let $M$ be the random number of obstacles inside $B_{ct + R}\l \mathcal{X}_0 \r$ and let $C_i,\ i=1,\ldots, M$ be the random positions of their centres. In compact notation, $\underline{C}:= (C_1, \dots, C_M)$. Lowercase letters shall indicate the realizations of such variables.

From now on, we shall write $ \Xt =  \Xt(\underline{C})$ and $ \Vt =  \Vt(\underline{C})$ to make the fact explicit that position and velocity can be written as a function of the location of the obstacles. Moreover, we shall indicate with $\mathcal{H}_m$ the subset of $\l B_{ct + R}\l \mathcal{X}_0^{(1)} \r \r^m$ such that, conditionally on $(x,v)$, the particle at least hits $1$ obstacle up to time $t$ and has no recollisions. By using Lemma \ref{Janossy measure for Mittaf-Leffler field} we can write
\begin{align}
    & E^{(x,v)}\lq \mathds{1}_{\mathpzc{A}}\mathds{1}_{\left\{T^R \in (0,t)\right\}} h\l  \Xt\l \underline{C} \r,  \Vt \l \underline{C} \r \r \rq \nonumber\\
    &=\sum_{m = 1}^{\infty} \int_{\mathcal{H}_m} h\l  \Xt(\underline{c}),  \Vt(\underline{c})\r  P^{(x,v)}  \l  C_1 \in dc_1, \cdots, C_{m} \in dc_m , \ef \l B_{ct + R}\l \mathcal{X}_0^{(1)} \r \r = m \r \nonumber\\
    &= \sum_{m = 1}^{\infty} \int_{\mathcal{H}_m} h\l  \Xt(\underline{c}),  \Vt(\underline{c})\r  \frac{\rho^m (-1)^m}{m!} \mathcal{M}_\nu^{(m)}\l -\rho ^\nu |B_{ct + R}\l \mathcal{X}_0^{(1)} \r | ^\nu \r \, dc_1dc_2\dots dc_m . \label{cuore}
\end{align}

Let $n$ be the number of hit obstacles out of $m$ and let $\mathcal{F}^{(n)}$ be the subset of $\mathcal{H}_m$ containing all the obstacle configurations such that the particle exactly hits  $n$ obstacles\footnote{Notably, collisions must occur before time $t$, ensuring that the hit obstacles are not too far apart; otherwise, the particle would not have sufficient time to reach the first obstacle and then adjust its trajectory to hit the second.
}. Hence 
$$\mathcal{H}_m= \bigcup _{n=1}^m \mathcal{F}^{(n)}   $$
and the above integral reads
$$
\sum_{n = 1}^{\infty} \sum_{m = n}^{\infty}
    \int_{\mathcal{H}_m} \delta _{\mathcal{F}^{(n)}}(\underline{c}) \, h\l  \Xt(\underline{c}),  \Vt(\underline{c})\r \frac{\rho^m (-1)^m}{m!} \mathcal{M}_\nu^{(m)}\l -\rho ^\nu \left| B_{ct + R}\l \mathcal{X}_0^{(1)} \r \right| ^\nu \r \, dc_1dc_2\dots dc_m, 
    $$
where $\delta _{\mathcal{F}^{(n)}}(\underline{c}) = 1$ if $\underline{c} \in \mathcal{F}^{(n)}$ and 0 otherwise, and we exchanged the order of summations.

Now, for each $n$, there are $\binom{m}{n}$ possible ways to choose which obstacles are actually hit, i.e.
$$ \mathcal{F} ^{(n)}= \bigcup _{j=1}^{\binom{m}{n}} \mathcal{F}^{(n)}_j $$ 
where $\mathcal{F}^{(n)}_j$ is the $j$-th subset of $\mathcal{F}^{(n)}$ of all the possible configurations with $n$ hit obstacles; hence $\delta _{\mathcal{F}^{(n)}_j} \delta _{\mathcal{F}^{(n)}} = \delta _{\mathcal{F}^{(n)}_j}$. Thus the above integral becomes
$$
\sum_{n = 1}^{\infty} \sum_{m = n}^{\infty}
    \sum _{j=1}^{\binom{m}{n}}\int_{\mathcal{H}_m} \delta _{\mathcal{F}^{(n)}_j}(\underline{c}) \, h\l  \Xt(\underline{c}),  \Vt(\underline{c})\r \frac{\rho^m (-1)^m}{m!} \mathcal{M}_\nu^{(m)}\l -\rho ^\nu \left| B_{ct + R}\l \mathcal{X}_0^{(1)} \r \right| ^\nu \r \, dc_1dc_2\dots dc_m.
    $$

Now, given that marginally each center is uniformly distributed on the sphere, 
all the possible $\binom{m}{n}$ choices give the same contribution. Hence, denoting by $\mathcal{F}^{(n)}_1$ the configuration such that the particle hits the first $n$ obstacles of the sequence, we have 
    $$ \sum_{n = 1}^{\infty} \sum_{m = n}^{\infty}
    \binom{m}{n} \int_{\mathcal{H}_m} \delta _{\mathcal{F}^{(n)}_1}(\underline{c}) \, h\l  \Xt(\underline{c}),  \Vt(\underline{c})\r \frac{\rho^m (-1)^m}{m!} \mathcal{M}_\nu^{(m)}\l -\rho ^\nu \left| B_{ct + R}\l \mathcal{X}_0^{(1)} \r \right| ^\nu \r \, dc_1dc_2\dots dc_m.
    $$
 Now, let 
 $\underline{C}  = (\underline{\widetilde{C}}, \underline{\hat{C}})$, with $\underline{\widetilde{C}} = (C_1,\ldots, C_n)$ and  $\underline{\hat{C}} = (C_{n+1},\ldots, C_m)$. We remark that actually in the above integral, the position $\mathcal{X}_t^{(1)}$ and the velocity $\mathcal{V}_t^{(1)}$ of the particle only depend on the hit obstacles $\underline{\widetilde{C}}$. 

We indicate the tube-like flow induced by the hit obstacles $\underline{\widetilde{c}}$ 
\begin{align*}
    \Theta\l\underline{\widetilde{c}}\r := \left\{ y\in B_{ct + R}\l \mathcal{X}_0^{(1)} \r  \ \text{s.t.}\ \left| \mathcal{X}^{(1)}_s(\underline{\widetilde{c}}) - y \right|_e \leq R,\ s\in[0,t] \right\}
\end{align*}
while its complement in $B_{ct + R}\l \mathcal{X}_0^{(1)} \r $ is denoted by $\overline{\Theta\l\underline{\widetilde{c}}\r}$. We use $\mathcal{K}^{(n)}$ for the set of feasible configurations of $\widetilde{\underline{c}}$ and thus we can split the integral on $\mathcal{F}_1^{(n)}$ as follows

\begin{align}
    \sum_{n = 1}^{\infty} \sum_{m = n}^{\infty} \binom{m}{n}&\int_{\mathcal{K}^{(n)} } dc_1 \ldots dc_n  h\l  \Xt(\underline{\widetilde{c}}),  \Vt(\underline{\widetilde{c}})\r
    \notag \\
    &\int_{\l \overline{\Theta\l\underline{\widetilde{c}}\r}\r ^ {m-n}} dc_{n+1}\ldots dc_m \frac{\rho^m (-1)^m}{m!} \mathcal{M}_\nu^{(m)}\l -\rho ^\nu \left| B_{ct + R}\l \mathcal{X}_0^{(1)} \r \right| ^\nu \r . 
    \notag 
\end{align}

Using Equation \eqref{lamperti laplace} in the definition of the Lamperti random variable, we note that 

\begin{align*}
    \frac{\rho^m (-1)^m}{m!} \mathcal{M}_\nu^{(m)}\l -\rho ^\nu \left| B_{ct + R}\l \mathcal{X}_0^{(1)} \r \right| ^\nu \r  &= \frac{\rho^m (-1)^m}{m!}\int_0^{\infty} (-1)^m l^m  e^{-\rho \left| B_{ct + R}\l \mathcal{X}_0^{(1)} \r \right| l} \ P\l \Lam \in dl \r \\
    &=  \int_0^{\infty} \frac{(\rho l)^m}{m!}  e^{-\rho\left| B_{ct + R}\l \mathcal{X}_0^{(1)} \r \right| l} \ P\l \Lam \in dl \r. 
\end{align*}
Taking into account that
\begin{align*}
    \sum_{m = n}^{\infty} \frac{(\rho l)^{m-n}}{(m-n)!} \int_{\l \overline{\Theta\l\underline{\widetilde{c}}\r} \r^{m-n}}dc_{n+1}\ldots dc_{m} 
    &= e^{\rho l \left|\overline{\Theta\l\underline{\widetilde{c}}\r}\right|},
\end{align*}
and $\left| \Theta \l \underline{\widetilde{c}}\r  \right| = \left| B_{ct + R}\l \mathcal{X}_0^{(1)} \r  \right| - \left|\overline{\Theta\l\underline{\widetilde{c}}\r}\right|$,
Equation \eqref{cuore} becomes
\begin{align}
    &E^{(x,v)} \lq \mathds{1}_{\mathpzc{A}}\mathds{1}_{\left\{T^{R} \in (0,t)\right\}} h\l  \Xt\l \underline{\widetilde{C}} \r,  \Vt \l \underline{\widetilde{C}} \r \r \rq \nonumber \\
    &= \sum_{n = 1}^{\infty} \int_{\mathcal{K}^{(n)} }dc_1\ldots dc_n \int_0^{\infty} P^{(x,v)} (\mathcal{L}\in dl) \frac{(\rho l)^n}{n!} h\l  \Xt(\underline{\widetilde{c}}),  \Vt(\underline{\widetilde{c}})\r e^{-\rho l \left| B_{ct + R}\l \mathcal{X}_0^{(1)} \r \right| } \nonumber \\
    & \sum_{m = n}^{\infty} \frac{(\rho l)^{m-n}}{(m-n)!}\int_{\l \overline{\Theta \l \underline{\widetilde{c}}\r } \r^{m-n}} dc_{n+1}\ldots dc_m \nonumber \\
    &= \sum_{n = 1}^{\infty} \int_{\mathcal{K}^{(n)} }dc_1\ldots dc_n \int_0^{\infty} P^{(x,v)} (\mathcal{L}\in dl) \frac{(\rho l)^n}{n!} h\l  \Xt(\underline{\widetilde{c}}),  \Vt(\underline{\widetilde{c}})\r e^{-\rho l \left| B_{ct + R}\l \mathcal{X}_0^{(1)} \r \right| } e^{\rho l \left| \overline{\Theta \l \underline{\widetilde{c}}\r } \right|}  \nonumber \\
    &= \sum_{n = 1}^{\infty} \int_{\mathcal{K}^{(n)} }dc_1\ldots dc_n \int_0^{\infty} P^{(x,v)} (\mathcal{L}\in dl) \frac{(\rho l)^n}{n!} h\l  \Xt(\underline{\widetilde{c}}),  \Vt(\underline{\widetilde{c}})\r e^{-\rho l \left| \Theta \l \underline{\widetilde{c}}\r  \right|}. \nonumber
\end{align}

Moreover, using  Equation \eqref{lamperti laplace} again for the Laplace transform of the Lamperti distribution, the above equation can be written as

\begin{align}
    & \sum_{n = 1}^{\infty} \int_{\mathcal{K}^{(n)} }dc_1\ldots dc_n \frac{(-\rho)^n}{n!} h\l  \Xt(\underline{\widetilde{c}}),  \Vt(\underline{\widetilde{c}})\r \int_0^{\infty}P(\mathcal{L}\in dl) (-l)^n e^{-\rho l \left| \Theta \l \underline{\widetilde{c}}\r  \right|} \nonumber \\
    &=  \sum_{n = 1}^{\infty} \int_{\mathcal{K}^{(n)} }dc_1\ldots dc_n \frac{(-\rho)^n}{n!} h\l  \Xt(\underline{\widetilde{c}}),  \Vt(\underline{\widetilde{c}})\r \lq \l\frac{d}{dz}\r^n \mathcal{M}_{\nu}(-z^{\nu}) \rq \bigg|_{z = \rho \left| \Theta\l \underline{\widetilde{c}} \r \right|} \notag.
\end{align}
We now observe that the $n$ obstacles can be hit in $ n! $ different chronological orders. Let $ \mathcal{K}^{(n)}_i $ denote the $ i $-th such ordering, i.e.  
\begin{align*}
    \mathcal{K}^{(n)} = \bigcup_{i = 1}^{n!}\mathcal{K}^{(n)}_i 
\end{align*}
and let $\mathcal{K}^{(n)}_1$ be the configuration in which the obstacle centered at $c_j$ is the $j$-th one to be hit in chronological order.
By using the notation for the derivative of the Mittag-Leffler function in Equation \eqref{Derivative of Mittag Leffler function}, Equation \eqref{cuore} can be written as
\begin{align}
    & E^{(x,v)} \lq         \mathds{1}_{\mathpzc{A}}\mathds{1}_{\left\{T^{R} \in (0,t)\right\}} h\l  \Xt\l \underline{\widetilde{C}} \r,  \Vt \l \underline{\widetilde{C}} \r \r \rq  \nonumber \\
    &=\sum_{n = 1}^{\infty} \int_{\mathcal{K}^{(n)} }dc_1\ldots dc_n \frac{(-\rho)^n}{n!} h\l  \Xt(\underline{\widetilde{c}}),  \Vt(\underline{\widetilde{c}})\r \mathcal{M}^{(n)}_{\nu}\l-\rho^{\nu} \left| \Theta\l \underline{\widetilde{c}} \r \right|^{\nu}\r  \nonumber \\
    &= \sum_{n = 1}^{\infty} \sum_{i = 1}^{n!}\int_{\mathcal{K}^{(n)}_i }dc_1\ldots dc_n \frac{(-\rho)^n}{n!} h\l  \Xt(\underline{\widetilde{c}}),  \Vt(\underline{\widetilde{c}})\r \mathcal{M}^{(n)}_{\nu}\l-\rho^{\nu} \left| \Theta\l \underline{\widetilde{c}} \r \right|^{\nu}\r  \nonumber \\
    &= \sum_{n = 1}^{\infty} \int_{\mathcal{K}^{(n)} _1 }dc_1\ldots dc_n (-\rho)^n h\l  \Xt(\underline{\widetilde{c}}),  \Vt(\underline{\widetilde{c}})\r \mathcal{M}^{(n)}_{\nu}\l-\rho^{\nu} \left| \Theta\l \underline{\widetilde{c}} \r \right|^{\nu}\r . \label{Theorem: BG limit before change variable without n! senza n fattoriale}
\end{align}
We now perform a change of variables in the above integral, following \cite{Gallavotti}. Let $\tau _1, \dots, \tau _n$ be the collision times, such that $0< \tau _1 < \tau _2 < \dots \tau _n < t$ and let $v_1, v_2, \dots, v_n$ be the unit velocity vectors that emerge in the sequence of collisions. 

Since no recollisions occur here, there is a one to one correspondence between $\underline{\widetilde{c}}$ and $\underline{\widetilde{\tau}} := (\tau _1, \dots, \tau _n)$,\ $\underline{\widetilde{v}} := ( v_1, \dots, v_n)$. We thus perform a change of variables in the integral in Equation \eqref{Theorem: BG limit before change variable without n! senza n fattoriale}. The Jacobian of the transformation leading to the new variables  has the form $\frac{c^n}{4^n} R^{2n}$. It means that Equation \eqref{Theorem: BG limit before change variable without n! senza n fattoriale} reads
\begin{align*}
    &\sum_{n = 1}^{\infty} \int_{\mathcal{K}^{(n)}_1 }dc_1\ldots dc_n (-\rho)^n h\l  \Xt(\underline{\widetilde{c}}),  \Vt(\underline{\widetilde{c}})\r \mathcal{M}^{(n)}_{\nu}\l-\rho^{\nu} \left| \Theta\l \underline{\widetilde{c}} \r \right|^{\nu}\r  \\
    &= \sum_{n = 1}^{\infty} \int _{0 < \tau _1 < \tau _2 \leq \dots < \tau_n < t} \int _{(S^2)^{n}}(-\rho)^n h\l  \Xt(\underline{\widetilde{\tau}}, \underline{\widetilde{v}}),  \Vt(\underline{\widetilde{\tau}}, \underline{\widetilde{v}})\r \mathcal{M}^{(n)}_{\nu}\l- \rho^{\nu} \left| \Theta\l \underline{\widetilde{\tau}}, \underline{\widetilde{v}} \r \right|^{\nu}\r \\
    &\qquad\qquad\qquad\qquad\qquad\qquad \frac{c^n}{4^n} R^{2n} dv_1\dots dv_n\,  d\tau _1 \dots d\tau _n \\
    &= \sum_{n = 1}^{\infty} \int _{0< \tau _1 < \tau _2 < \dots < \tau_n < t} \int _{(S^2)^{n}}(-\rho)^n h\l  \Xt(\underline{\widetilde{\tau}}, \underline{\widetilde{v}}),  \Vt(\underline{\widetilde{\tau}}, \underline{\widetilde{v}})\r \mathcal{M}^{(n)}_{\nu}\l- \rho^{\nu} \left| \Theta\l \underline{\widetilde{\tau}}, \underline{\widetilde{v}} \r \right|^{\nu}\r \\
    &\qquad\qquad\qquad\qquad\qquad\qquad c^n \pi^n R^{2n} \mu(dv_1)\dots \mu(dv_n)\,  d\tau _1 \dots d\tau _n 
\end{align*}
where in the last step we denoted by $\mu(dv_i)= dv_i/4\pi$ the uniform probability measure on the unit sphere.

Putting all together, the first summand of Equation \eqref{scomposizione della probabilita} becomes
\begin{align} \label{densita processo di Lorentz}
    &  E^{(x,v)}h\l \Xt, \Vt \r \mathds{1}_{\mathpzc{A}} =  h(x,v)\left[ 1- \mathcal{M}_\nu \l -\rho ^\nu \l \frac{4}{3} \pi R^3 \r^\nu \r \right] + \notag \\
    & \notag + h(x+cvt,v) \mathcal{M} _\nu \l - \rho ^\nu \l \pi R^2 ct + \frac{4}{3} \pi R^3 \r^\nu \r + \\
    & \notag + \sum_{n = 1}^{\infty} \int _{0< \tau _1 < \tau _2 < \dots < \tau_n < t} \int _{(S^2)^{n}}(-\rho)^n h\l  \Xt(\underline{\widetilde{\tau}}, \underline{\widetilde{v}}),  \Vt(\underline{\widetilde{\tau}}, \underline{\widetilde{v}})\r \mathcal{M}^{(n)}_{\nu}\l- \rho^{\nu} \left| \Theta\l \underline{\widetilde{\tau}}, \underline{\widetilde{v}} \r \right|^{\nu}\r \\
    &\qquad\qquad\qquad\qquad\qquad\qquad c^n \pi^n R^{2n} \mu(dv_1)\dots \mu(dv_n)\,  d\tau _1 \dots d\tau _n .
\end{align}
Passing to the Boltzman-Grad limit \eqref{BG limit} in Equation \eqref{densita processo di Lorentz}, we obtain that
\begin{align}
    \lim _{R\to 0}E^{(x,v)}h\l \Xt, \Vt \r \mathds{1}_{\mathpzc{A}} &=  h(x+cvt,v) \, \mathcal{M} _\nu \l- \lambda ^\nu t^\nu \r + \sum _{n=1}^\infty (-1)^n \mathcal{M}_\nu ^{(n)} (-\lambda ^\nu t^\nu)\,  \lambda ^n\notag   \\
     & \int _{0< \tau _1 < \tau _2 < \dots < \tau_n < t} \int _{(S^2)^{n}}h\l x_t, v_n \r
   \mu(dv_1)\dots \mu (dv_n) d\tau _1 \dots d\tau _n, \label{Theorem BG model 1: fine parte senza ricollisioni e riconoscimento duhamel}
\end{align}
where
\begin{align*}
    x_t:= x+ c\sum _{j=1}^{n} (\tau_j - \tau_{j-1}) v_{j-1} + c(t-\tau _{n})\, v_{n}, \qquad v_0 := v. 
\end{align*}
and we have taken into account that the unit velocity vector at time $t$ is $v_n$.
For details on sufficient conditions to exchange the limit for $R\to 0$ and the sum over $n$ in Equation \eqref{densita processo di Lorentz} see Appendix \ref{Appendix: appendice sulla convergenza dominata}.

The right side of \eqref{Theorem BG model 1: fine parte senza ricollisioni e riconoscimento duhamel}    coincides with the Duhamel expansion of $ \mathds{E}^{x,v} h \l\Xl, \Vl\r$  of the random flight defined in Section \ref{Subsection: Random flight Model 1}, as shown in Theorem \ref{Model 1: teorema su Duhamel} in the special case $d=3$.

To conclude this proof, it is sufficient to verify that the recollision term $E^{(x,v)}h\l \Xt, \Vt\r\mathds{1}_{\mathpzc{A}^c}$ 
in Equation \eqref{scomposizione della probabilita} vanishes for $R\to 0$. Firstly, let us rewrite Equation \eqref{Theorem BG model 1: fine parte senza ricollisioni e riconoscimento duhamel} explicitly as  
\begin{align}
    \lim _{R\to 0} & \int _{S^2} \int _{B_{ct}(x)} h(x',v') P^{(x,v)}\l \Xt \in dx',  \Vt\in dv', \mathpzc{A}\r \notag\\
    &= \int _{S^2} \int _{B_{ct}(x)} h(x', v') P^{(x,v)} \l \Xl\in dx', \Vl \in dv' \r. \label{fff}
\end{align}
The above considerations are true for each  function $h\in C_b \l \mathbb{R}^3\times S^2 \r$, and in particular for a function $h$ which is equal to $1$ on the domain $B_{ct}(x)\times S^2$, in which case Equation \eqref{fff} reads
\begin{align} \notag
    \lim _{R\to 0} & \int _{S^2} \int _{B_{ct}(x)}  P^{(x,v)}\l  \Xt \in dx',  \Vt\in dv', \mathpzc{A} \r = 1, 
\end{align}
i.e.
\begin{align*}
    \lim_{R\to0} P^{(x,v)} \l \mathpzc{A} \r = 1 \qquad \text{and then} \qquad \lim_{R\to0} P^{(x,v)} \l \mathpzc{A}^c \r = 0.
\end{align*}
Finally, we observe that 
\begin{align*}
     \lim _{R\to 0} &\left| E^{(x,v)}h\l \Xt, \Vt \r \mathds{1}_{\mathpzc{A}^c}\right| \\ \notag 
    &\leq   \underset {{B_{ct}(x) \times S^2}}{\sup} |h| \, \lim _{R\to 0}  \int _{S^2} \int _{B_{ct}(x)}  P^{(x,v)}\l \Xt \in dx',  \Vt\in dv', \mathpzc{A}^c \r  \\
    & = \underset {{B_{ct}(x) \times S^2}}{\sup} |h| \, \lim _{R\to 0}P^{(x,v)} \l \mathpzc{A}^c \r    = 0,
\end{align*}
and this concludes the proof.
\end{proof}

\begin{os}
The above theorem shows that, under the Boltzmann-Grad limit \eqref{BG limit} we obtain  a resulting random flight which is easier to be handled with respect to the Lorentz process, because, as we have seen in the proof, the measure of paths having recollisions tends to zero. 
However, there is a crucial difference between the present model and Gallavotti's one.
Specifically, in  Gallavotti's model, the elimination of recollisions leads to Markovianity. Indeed,  the resulting random flight is the Markovian transport process recalled in Section \ref{Markovian random flight}, which is in turn approximated by a diffusion process by a further scaling limit. 
Instead, when $\nu \in (0,1)$, even if the probability of recollisions vanishes, the resulting random flight  is not Markovian. This is due to the Mittag-Leffler point process that generates infinite mean, power law, free flight times of the particle: these flight times remain with infinite expectation even in the limit. Moreover, we shall see in the next section that, under a proper scaling, it leads to an anomalous diffusive behaviour.
\end{os}

\subsection{Convergence to anomalous diffusion} \label{Diffusion model 1}

We now aim to study the asymptotic behaviour of the random flight defined in Section \ref{Subsection: Random flight Model 1}. Specifically, we show that, under an appropriate limiting regime, the random flight weakly converges to an anomalous diffusive process. In the following, we define such process, by means of its finite dimensional distributions. We shall indicate with $\langle\cdot,\cdot\rangle$ the usual inner product.

\begin{defin} \label{Definition: anomalous diffusion}
    Consider a stochastic process $W = \lg W_t,\ t\geq 0 \rg$ on $\mathbb{R}^d$, $d\geq 1$. We say that it is a Mittag-Leffler anomalous-diffusion process of parameter $\nu\in(0,1]$ if, for any choice of times $0 \leq t_1 <\ldots < t_n$,  the vector $\Gamma = \l W_{t_1}, \ldots, W_{t_n} \r$ has the characteristic function
    \begin{align}
        \varphi_{\nu}^n(u) = \mathcal{M}_{\nu} \l - \l \frac{1}{2} \langle u, Q u \rangle\r^{\nu} \r \qquad \forall u\in \R^{nd},\ n\geq1, \label{funzione caratteristica finito dimensionali}
    \end{align}
    where the matrix $Q\in \R^{nd\times nd}$ is a block diagonal matrix,  and the $j$th block has the form
    \begin{align}
        Q_j =  [t_h \wedge  t_k]_{h,k \in \{1,\ldots, n\}} \qquad j= 1,\ldots, d. \label{Anomalous diff 1: forma del j blocco nella definizione}
    \end{align}
\end{defin}

We observe that for $\nu=1$ the above process reduces to a standard Brownian motion on $\mathbb{R}^d$, with var-cov matrix $Q$. Indeed, $\mathcal{M}_{1}(x)=e^x$ and thus the characteristic function in Equation \eqref{funzione caratteristica finito dimensionali} becomes that of Brownian motion. See \cite[page 23]{sato} for details.

From Equation \eqref{funzione caratteristica finito dimensionali}, we see that $ \lg W_t,\ t\geq 0 \rg$ is self-similar with Hurst index equal to $1/2$. We indeed recall that a process $\lg Y_t,\ t\geq 0 \rg$ is said to be self-similar with Hurst index $H>0$ if, for any $a>0$, we have that $\lg Y_{at},\ t\geq 0 \rg$ has the same finite dimensional distributions of  $\lg a^H Y_t,\ t\geq 0 \rg$.
Furthermore, the process $W$ turns out to be a particular case of the randomly scaled Gaussian processes studied in \cite[Sect. 3]{Pagnini}, as we shall see later.

Putting $n=1$ in Equation \eqref{funzione caratteristica finito dimensionali}, we have that $W_t$ has characteristic function
    \begin{align} 
        \varphi_\nu^1 (u) = \mathcal{M}_{\nu} \l -  \frac{1}{2^\nu} |u|_e^{2\nu} t^\nu       \r \qquad \forall u\in \R^{d} \label{funzione caratteristica diffusione anomala}
    \end{align}
from which it can be seen that each component of $W_t$ has infinite variance. Hence this process is characterized as a super-diffusion in physical terms.

\begin{os} \label{Anomalous diffusion: remark equazione governante}
We observe that $W_t$ has a density $q(x,t)$, which is the fundamental solution to the fractional equation
\begin{align} 
    \frac{\partial ^\nu}{\partial t ^\nu} q(x,t)= - \frac{1}{2^\nu} (-\Delta )^\nu q(x,t).
\label{fractional diffusion equation}
\end{align}
In Equation \eqref{fractional diffusion equation}, the operator $\frac{\partial ^\nu}{\partial t ^\nu}$ denotes the Caputo fractional derivative, while  $-(-\Delta )^\nu$ denotes the fractional Laplacian. To prove \eqref{fractional diffusion equation} heuristically, observe that the fractional Laplacian has Fourier symbol $ -|u|^{2\nu}$. The Caputo derivative of equation \eqref{funzione caratteristica diffusione anomala} is
$$ \frac{\partial ^\nu}{\partial t ^\nu}  \varphi_\nu^1 (u) = -\frac{1}{2^\nu} |u|^{2\nu}  \varphi_\nu^1 (u)  $$
and Fourier inversion gives equation \eqref{fractional diffusion equation}.
We will make this rigorous in the next section (see Remark \ref{fradens}).
\end{os}
In the following proposition we shall see that the Mittag-Leffler anomalous-diffusion process is equal in distribution to a randomly scaled Brownian motion.  We emphasize once more that the anomalous diffusion under consideration is not defined as a time-changed Brownian motion. Indeed, it is presented as a specific case of non-Markovian continuous-time process and Definition \ref{Definition: anomalous diffusion} is internally consistent. Nevertheless, the following result is useful as it renders the process analytically tractable. 

\begin{prop} \label{Convergence model 1: ugualianza fdd del mb lampertizzato}
    Let $B = \{B_t,\ t\geq0\}$ be a standard Brownian motion in $\mathbb{R}^d$, $d\geq 1$, with covariance matrix given by Equation \eqref{Anomalous diff 1: forma del j blocco nella definizione}.
    Then $ \{W_t\} \overset{fdd}{=} \{B_{\mathcal{L}t}\}$, where $\mathcal{L}$ is a Lamperti random variable independent of $B$.
\end{prop}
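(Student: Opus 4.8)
The plan is to compute the joint characteristic function of the time-changed process $\{B_{\mathcal{L}t}\}$ at an arbitrary finite collection of times and to match it with the expression \eqref{funzione caratteristica finito dimensionali} that defines $W$ in Definition \ref{Definition: anomalous diffusion}. Since a distribution is determined by its characteristic function, this identification immediately yields the equality of finite-dimensional distributions. The computation is a conditioning argument exploiting Brownian scaling together with the defining Laplace transform \eqref{lamperti laplace} of the Lamperti variable.

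First I would fix times $0\leq t_1<\cdots<t_n$ and condition on $\mathcal{L}=l$. Because $\mathcal{L}$ is independent of $B$, the conditional law of the vector $\l B_{lt_1},\ldots, B_{lt_n}\r$ is that of a centered Gaussian vector. Its covariance matrix is $lQ$, where $Q$ is the block-diagonal matrix of \eqref{Anomalous diff 1: forma del j blocco nella definizione}: this is just the Brownian scaling of covariances, $\mathrm{Cov}\l B_{ls}, B_{lt}\r = (ls)\wedge(lt) = l\,(s\wedge t)$ in each coordinate, so the covariance of $\l B_{t_1},\ldots,B_{t_n}\r$ is $Q$ and that of $\l B_{lt_1},\ldots,B_{lt_n}\r$ is $lQ$. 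Consequently the conditional characteristic function is
\begin{align*}
\mathds{E}\lq e^{i\langle u, \l B_{lt_1},\ldots,B_{lt_n}\r\rangle}\ \big|\ \mathcal{L}=l \rq = \exp\l -\frac{l}{2}\langle u, Qu\rangle \r, \qquad u\in\R^{nd}.
\end{align*}

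Next I would integrate out $\mathcal{L}$. Setting $\eta := \frac{1}{2}\langle u, Qu\rangle$, the tower property gives
\begin{align*}
\mathds{E}\lq e^{i\langle u, \l B_{\mathcal{L}t_1},\ldots,B_{\mathcal{L}t_n}\r\rangle}\rq = \mathds{E}\lq e^{-\eta\mathcal{L}}\rq = \mathcal{M}_\nu\l -\eta^\nu\r = \mathcal{M}_\nu\l -\l \tfrac{1}{2}\langle u, Qu\rangle\r^\nu \r,
\end{align*}
where the middle equality is precisely the defining Laplace transform \eqref{lamperti laplace} of the Lamperti distribution. The right-hand side is exactly $\varphi_\nu^n(u)$ from \eqref{funzione caratteristica finito dimensionali}, which establishes the finite-dimensional equality $\{W_t\}\overset{fdd}{=}\{B_{\mathcal{L}t}\}$.

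The only point that requires care is the nonnegativity $\eta\geq 0$, which is what legitimizes evaluating the Laplace transform \eqref{lamperti laplace} at the argument $\eta$; this holds because $Q$, being a Gaussian covariance matrix, is positive semidefinite. The independence of $\mathcal{L}$ and $B$ is what makes the conditioning step clean, and no integrability subtleties arise since every integrand has modulus bounded by $1$. Thus the argument is essentially complete once these two observations are recorded.
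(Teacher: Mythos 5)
Your proposal is correct and follows essentially the same route as the paper's proof: condition on $\mathcal{L}$, use the Gaussian characteristic function of $\l B_{lt_1},\ldots,B_{lt_n}\r$ with covariance $lQ$, and recognize the resulting expectation $\E\lq e^{-\frac{1}{2}\langle u,Qu\rangle\,\mathcal{L}}\rq$ as the Lamperti Laplace transform \eqref{lamperti laplace}, matching \eqref{funzione caratteristica finito dimensionali}. Your version merely makes explicit two points the paper leaves implicit — the scaling step $\mathrm{Cov}(B_{ls},B_{lt})=l\,(s\wedge t)$ and the positive semidefiniteness of $Q$ justifying evaluation of the Laplace transform at $\eta=\frac{1}{2}\langle u,Qu\rangle\geq 0$.
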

\begin{proof}
    Let us consider a sequence of times $0 \leq t_1 <\ldots < t_n, \ n\ge1$. Let $\mathcal{L}$ follow a Lamperti distribution of parameter $\nu\in(0,1]$. Consider the vector $\widetilde{\Gamma} := (B_{\Lam t_1},\ldots, B_{\Lam t_n})$, on some probability space, with random var-cov matrix given by $\Lam Q$. Then, using $\E$ for the expectation on this probability space, one can write
    \begin{align*}
        \E \lq e^{i \langle u, \widetilde{\Gamma}\rangle} \rq 
        &= \E\lq e^{-\frac{1}{2} \langle u, \Lam Q u \rangle} \rq \\
        &= \E\lq e^{-\frac{1}{2}\langle u, Q u \rangle \Lam} \rq = \mathcal{M}_{\nu}\l - \l \frac{1}{2}\langle u, Q u \rangle \r^{\nu} \r \qquad \forall u\in \R^{nd},
    \end{align*}
    where the matrix $Q\in \R^{nd\times nd}$ is a block diagonal matrix,  and the $j$th block has the form as in Equation \eqref{Anomalous diff 1: forma del j blocco nella definizione}. The thesis follows.
\end{proof}

As a consequence, if $Q$ is positive definite, then the process $\Gamma$ as in Definition \ref{Definition: anomalous diffusion} has density given by
\begin{align*}
    f_{\Gamma}(x) &= \int_0^{\infty} \frac{1}{\l 2\pi\, l^{nd} \det{Q}\r^{nd/2}} e^{-\frac{1}{2l}\langle x,Q^{-1}x\rangle} \mathpzc{l}(dl) .
\end{align*}

In the following theorem we consider analogous assumptions to those considered in Proposition \ref{limite diffusivo classico}, which deals with the Markovian case. Specifically, they involve letting the speed $c$ and the parameter $\lambda$ tend to infinity in such a way that the ratio $c^2/\lambda$ remains finite.
However, as noticed in Section \ref{Exchengeable Fractional PP}, the interpretation of  $\lambda$ differs in the present framework: indeed, for $\nu \in (0,1)$, the parameter $\lambda$ characterizes the intensity of the counting process as described by Equation \eqref{comportamento locale}, whereas for $\nu = 1$, corresponding to the Markovian case, $\lambda$ also represents  the expected number of direction changes within a unit time interval.

\begin{te} \label{Diffusion: convergence theorem convergenza diffusione anomala}
Let $W$ be a Mittag-Leffler anomalous-diffusion process as in Definition \ref{Definition: anomalous diffusion} and let $X_{(1)}^{\nu}$ be the random flight defined in Section \ref{Subsection: Random flight Model 1}. Then, under the scaling limit
\begin{align} \label{ppp}
    c\to \infty \qquad \lambda \to \infty \qquad \frac{c^2}{\lambda} = 1 
\end{align}
we have that
\begin{align*}
     \lg X_{(1)}^{\nu}(t)\rg \overset {fdd}{\longrightarrow}\{W_t\} .
\end{align*}
\end{te}

\begin{proof}
Without loss of generality, we shall put $c^2 = \lambda $ and consider the one variable limit as $c\to\infty$. 
The proof is based on the representation in Theorem \ref{Model 1: teorema su Duhamel}. First, we observe that 
\begin{align*}
    \mathds{P}^{(x,v)}\l X_{(1)}^{\nu}(t_1) \in A_1,\ldots, X_{(1)}^{\nu}(t_n) \in A_n \r = \int_{0}^{\infty}\mathds{P}^{(x,v)} \l X^{(c,l\lambda)}_{t_1}\in A_1,\ldots, X^{(c,l\lambda)}_{t_n}\in A_n \r \mathpzc{l}(dl)
\end{align*}
where $A_i$ are continuity sets, $i=1,\ldots,n$, $n\geq1$. We now apply the diffusive limit \eqref{ppp} to both members of the above equation. By the dominated convergence theorem, 
\begin{align*}
   & \lim_{c\to\infty} \mathds{P}^{(x,v)}\l X_{(1)}^{\nu}(t_1) \in A_1,\ldots, X_{(1)}^{\nu}(t_n) \in A_n \r \notag \\ &= \lim_{c\to\infty} \int_{0}^{\infty}\mathds{P}^{(x,v)} \l X^{(c,l\lambda)}_{t_1}\in A_1,\ldots, X^{(c,l\lambda)}_{t_n}\in A_n \r \mathpzc{l}(dl) \\
    &= \int_{0}^{\infty} \lq \lim_{c\to\infty} \mathds{P}^{(x,v)} \l X^{(c,l\lambda)}_{t_1}\in A_1,\ldots, X^{(c,l\lambda)}_{t_n}\in A_n \r \rq \mathpzc{l}(dl) \\
    &= \int_{0}^{\infty} \P^x\l B_{\frac{t_1}{l}} \in A_1,\ldots, B_{\frac{t_n}{l}}  \in A_n \r \mathpzc{l}(dl) \\
    &=  \P^x\l B_{\frac{t_1}{\Lam}} \in A_1,\ldots, B_{\frac{t_n}{\Lam}}  \in A_n \r
\end{align*}
where we used the Proposition \ref{limite diffusivo classico} with $D = 1$.
Using the property of the Lamperti distribution in Equation \eqref{Lamperti: L = 1/L} we have 
\begin{align*}
    \P^x\l B_{\frac{t_1}{\Lam}} \in A_1,\ldots, B_{\frac{t_n}{\Lam}}  \in A_n \r \overset{}{=} \P^x\l B_{t_1\Lam} \in A_1,\ldots, B_{t_n\Lam}  \in A_n \r.
\end{align*}
Finally, by Proposition \ref{Convergence model 1: ugualianza fdd del mb lampertizzato} we get 
\begin{align*}
    \lim_{c\to\infty} \mathds{P}^{(x,v)}\l X_{(1)}^{\nu}(t_1) \in A_1,\ldots, X_{(1)}^{\nu}(t_n) \in A_n \r = \P^x\l W_{t_1} \in A_1,\ldots, W_{t_n} \in A_n \r.
\end{align*}
\end{proof}

\section{Averaging Feller Semigroups}
\label{Section: averaging feller semigroups}

Both  the anomalous diffusion defined in Section \ref{Diffusion model 1} and the para-Markov chains recalled in Section \ref{Basic notions and preliminary results} are governed by equations of the form   $ \partial _t^\nu q= -(-G)^\nu q$.
 One can conjecture that this theory can be extended to more general cases. In particular, we might think that, whenever a non-Markovian process $X = \{X_t,\ t\geq0\}$ is equal in distribution to $ \lg M_{\Lam t},\ t\geq0\rg$, where $M = \{M_t,\ t\geq0\}$ is a Markovian process  and $\Lam$ is a Lamperti random variable, then its governing equation is of the type $ \partial _t^\nu q= -(-G)^\nu q$, where $G$ is the infinitesimal generator of $M$.
In this section we make this idea rigorous and we present the connection between random scaling of Markov processes and non-local operators. 
This connection, which is interesting in itself, will be used later in the paper.

Let us consider a Polish space $(E,\mathcal{E})$ endowed by its Borel $\sigma$-algebra.
Let $\l \Omega, \mathcal{F}, \mathbb{P}^x \r$, $x\in E$, be a family of probability spaces equipped with a filtration $\lg \mathcal{F}_t,\ t\geq0 \rg$. Let $M = \lg M_t,\ t\geq0\rg$ be an adapted Feller process, which takes value on $(E,\mathcal{E})$, associated with the Feller semigroup of operators
\begin{align}
\sgM_t h(x)= \mathbb{E}^x h(M_t) \notag 
\end{align}
on the Banach space $\l C_0(E), \left\| \cdot \right\| \r$,
endowed with the sup-norm. Feller semigroups are strongly continuous, i.e. $\left\| \sgM_t h -h \right\| \to 0$ as $t \to 0$ (for more on Feller semigroups see \cite{schillinglevy}). It is known that the mapping $t \mapsto \sgM_t  h$ is the unique solution to the abstract Cauchy problem
$$ \frac{\partial }{\partial  t} g(t) = Gg(t) \qquad g(0)=h \qquad h\in \Dom (G)$$
where $G$ is the infinitesimal generator of the semigroup.

We here consider the operators $\{ \sgL_t\}_{t \geq 0}$ defined through the Bochner integrals
 \begin{align}
     \sgL_t h \, = \, \int_0^{\infty} \sgM_{ty} h \, \mathpzc{l}(y)dy
     \label{bb}
 \end{align}
 where $\mathpzc{l}(\cdot)$ denotes a density of a Lamperti distribution as in Equation \eqref{Misura Lamperti distribution}.



We now show that the mapping  $t \mapsto \sgL_th$ is the unique solution of the following (non-local) abstract Cauchy problem
\begin{align}
\frac{\partial ^\nu}{\partial t^\nu} g(t) = -(-G)^\nu g(t) \qquad g(0)=h \in \Dom(G),\notag 
\end{align}
where the operators are defined as follows. The fractional derivative on the left-hand side is
\begin{align}
\frac{\partial^\nu}{\partial t^\nu} g(t) := \, \frac{1}{\Gamma (1-\nu)}\frac{\partial }{\partial t} \int_0^t (g(s)-g(0)) (t-s)^{-\nu} ds  
\label{deffrac}
\end{align}
for any $g$ such that the integral makes sense as a Bochner integral on $C_0 (E)$ and the differentiation is possible in the strong sense. The operator defined in Equation \eqref{deffrac} is the abstract analogue of the Caputo fractional derivative in Equation \eqref{Definizione di Derivata di Caputo}. In order to define the operator $- \l -G \r^\nu$ we use Bochner subordination technique, as follows. We define the family of operators $\lg \sub_t \rg_{t \geq 0}$ such that
\begin{align}
\sub_t h := \int_{0}^{+\infty} \sgM_sh \,  \mu_t(s)ds
\label{subsem}
\end{align}
where $\mu_t(\cdot)$ is the density of a stable subordinator $\{H(t),\ t\geq 0\}$ of stability index  $\nu \in (0,1)$, i.e., a strictly increasing L\'evy process with Laplace transform
\begin{align}
    e^{-t\lambda^\nu} \, = \, \int_0^{+\infty} e^{-\lambda s} \, \mu_t(s) ds.
    \label{laplacestable}
\end{align}
It is known that the family $\lg \sub_t \rg_{t\geq0}$ forms a strongly continuous semigroup on $C_0(E)$ \cite[Proposition 13.1]{librobern} and that it is generated by the operator $G^{(\nu)}$ such that \cite[Theorem 13.6]{librobern}
\begin{align}
    G^{(\nu)} h := \int_0^{+\infty} \l \sgM_s h - h \r \frac{\nu s^{-\nu-1}}{\Gamma (1-\nu)}ds , \qquad h\in \Dom(G),
\label{defGnu}
\end{align}
a Bochner integral on $C_0(E)$. It is further true that (see \cite{schillingpaper})
\begin{align}
  G^{(\nu)} \mid_{\text{Dom}(G)}  = -\l -G \r^\nu.
\end{align}
In the following, we shall denote  by 
$
\{L(t),\ t\geq 0\}
$
the inverse process of $H$,  i.e.  
\begin{align}
L(t) := \inf \ll s \geq 0 : H(s) > t \rr \qquad t\geq 0  \label{definizione inverso}.
\end{align}
The random variable $ L(t) $ is known to be absolutely continuous for each $t>0$ and belongs to the class of inverse subordinators, crucial processes in the context of anomalous diffusion related to fractional kinetic (see more on inverse subordinators in \cite{tlms}).

\begin{te} \label{teorema equazione}
    Let $\lg \sgM_t \rg_{t\geq0} $ be a Feller semigroup as above, generated by $\l G, \Dom(G) \r$. Consider the operators $\lg \sgL_t \rg_{t\geq0}$ defined in Equation \eqref{bb} and the abstract Cauchy problem
        \begin{align}
            \frac{\partial^{\nu}}{\partial t^\nu} g(t) = -(-G)^\nu g(t) \qquad g(0)=h,
        \label{eqgen}
    \end{align}
    where the operators appearing in Equation \eqref{eqgen} are defined as in Equations \eqref{deffrac} and \eqref{defGnu}. It is true that: 
    \begin{enumerate}
        \item $\lg \sgL_t \rg_{t\geq0}$ is a family of uniformly bounded linear operators on $C_0(E)$ such that
        \begin{align*}
            \sgL_t : C_0(E) \mapsto C_0 (E) \qquad \sgL_t : \text{Dom}(G) \mapsto \text{Dom}(G)
        \end{align*}
        for any $t \geq 0$;
        \item For any $h \in C_0(E)$ the mapping $t \mapsto \mathcal{Q}_th$ is strongly continuous, i.e., $\left\| \sgL_th - \sgL_sh \right\| \to 0$, as $t \to s$ for any $s,t \geq 0$; 
        \item For any $h \in \text{Dom}(G)$ the mapping $t \mapsto \sgL_t h$ solves Equation \eqref{eqgen}.
    \item If $\mathpzc{q}(t)$ is a bounded (strongly) continuous solution on $C_0(E)$ to Equation \eqref{eqgen} such that $t \mapsto -(-G)^\nu \mathpzc{q}(t)$ is bounded and (strongly) continuous, then $\mathpzc{q}(t) = \mathcal{Q}_th$ for any $t \geq 0$.
    \end{enumerate}
\end{te}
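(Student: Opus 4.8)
The plan is to dispose of (1) and (2) by direct Bochner-integral estimates and to reduce the substance of the theorem—items (3) and (4)—to a single Laplace-transform computation in $t$, whose only nontrivial ingredient is the Stieltjes transform of the Lamperti density. Throughout I use that $\lg \sgM_t \rg$ is a contraction semigroup, so $\|\sgM_r h\|\le\|h\|$ for all $r\ge0$, and that $\int_0^\infty \mathpzc{l}(y)\,dy=1$. For (1), uniform boundedness is immediate from
\begin{align*}
\|\sgL_t h\|\le \int_0^\infty \|\sgM_{ty}h\|\,\mathpzc{l}(y)\,dy\le\|h\|,
\end{align*}
so each $\sgL_t$ is a contraction; the Bochner integral in \eqref{bb} is well defined since $y\mapsto\sgM_{ty}h$ is strongly continuous and bounded, hence its integral lies in $C_0(E)$. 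For the invariance of $\Dom(G)$ I would note that for $h\in\Dom(G)$ one has $\sgM_{ty}h\in\Dom(G)$ with $G\sgM_{ty}h=\sgM_{ty}Gh$ and $\|\sgM_{ty}Gh\|\le\|Gh\|$; since $G$ is closed and both $\int_0^\infty\sgM_{ty}h\,\mathpzc{l}(y)\,dy$ and $\int_0^\infty\sgM_{ty}Gh\,\mathpzc{l}(y)\,dy$ converge, Hille's theorem gives $\sgL_t h\in\Dom(G)$ and the commutation $G\sgL_t h=\sgL_t Gh$. Item (2) follows from dominated convergence: $\|\sgL_t h-\sgL_s h\|\le\int_0^\infty\|(\sgM_{ty}-\sgM_{sy})h\|\,\mathpzc{l}(y)\,dy$, where the integrand vanishes pointwise in $y$ by strong continuity and is dominated by $2\|h\|\,\mathpzc{l}(y)$; note $\sgL_0 h=h$.

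\textbf{The core: item (3).}
Here I would pass to the Laplace transform $\widetilde{\sgL}_s h:=\int_0^\infty e^{-st}\sgL_t h\,dt$ ($s>0$). A Fubini interchange (legitimate since the integrand is bounded by $\|h\|$ and $e^{-st}\mathpzc{l}(y)$ is integrable) together with the substitution $r=ty$ gives
\begin{align*}
\widetilde{\sgL}_s h=\int_0^\infty\sgM_r h\,k_s(r)\,dr,\qquad k_s(r):=\int_0^\infty\frac{1}{y}\,e^{-sr/y}\,\mathpzc{l}(y)\,dy.
\end{align*}
The one analytic fact needed is the Stieltjes transform of the Lamperti density, which I would derive purely from facts already in the paper: writing $\tfrac{1}{y+z}=\int_0^\infty e^{-(y+z)\sigma}\,d\sigma$ and using \eqref{lamperti laplace},
\begin{align*}
\int_0^\infty\frac{\mathpzc{l}(y)}{y+z}\,dy=\int_0^\infty e^{-z\sigma}\,\mathcal{M}_\nu(-\sigma^\nu)\,d\sigma=\frac{z^{\nu-1}}{z^\nu+1},
\end{align*}
the last step being the Mittag-Leffler Laplace transform recorded in the paper with $\lambda=1$. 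Computing the Laplace transform of $k_s$ in the variable $r$ and applying this identity yields $\int_0^\infty e^{-\xi r}k_s(r)\,dr=s^{\nu-1}/(s^\nu+\xi^\nu)$, which is exactly $s^{\nu-1}\int_0^\infty e^{-\xi r}g_s(r)\,dr$ for the subordination kernel $g_s(r):=\int_0^\infty e^{-s^\nu u}\mu_u(r)\,du$; hence $k_s=s^{\nu-1}g_s$.

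\textbf{Identification of the solution.}
Combining the previous step with \eqref{subsem} and the fact that $G^{(\nu)}$ generates $\lg\sub_u\rg$ (so that $(s^\nu-G^{(\nu)})^{-1}=\int_0^\infty e^{-s^\nu u}\sub_u\,du$ for $s>0$) gives
\begin{align*}
\widetilde{\sgL}_s h=s^{\nu-1}\int_0^\infty e^{-s^\nu u}\sub_u h\,du=s^{\nu-1}\,(s^\nu-G^{(\nu)})^{-1}h.
\end{align*}
This is precisely the Laplace transform of the inverse-subordinated semigroup $t\mapsto\int_0^\infty\sub_u h\,f(u,t)\,du$, where $f(\cdot,t)$ is the density of $L(t)$ (indeed $\int_0^\infty e^{-st}f(u,t)\,dt=s^{\nu-1}e^{-s^\nu u}$). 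Since that map is the strong solution of \eqref{eqgen} with generator $G^{(\nu)}=-(-G)^\nu$ and is strongly continuous, equality of the Laplace transforms of two strongly continuous functions forces $\sgL_t h=\int_0^\infty\sub_u h\,f(u,t)\,du$ for every $t$, so $t\mapsto\sgL_t h$ solves \eqref{eqgen}; by item (1), $\sgL_t h\in\Dom(G)$, so the right-hand side of \eqref{eqgen} is well defined.

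\textbf{Uniqueness (item 4) and the main obstacle.}
For (4) I would argue again by Laplace transform. If $\mathpzc{q}(t)$ is bounded and strongly continuous with $t\mapsto-(-G)^\nu\mathpzc{q}(t)$ bounded and strongly continuous, then $\widetilde{\mathpzc{q}}(s)$ exists for $s>0$; the Laplace transform of the Caputo derivative gives $\int_0^\infty e^{-st}\partial_t^\nu\mathpzc{q}(t)\,dt=s^\nu\widetilde{\mathpzc{q}}(s)-s^{\nu-1}h$, and pulling $G^{(\nu)}$ out of the Bochner integral on the right by closedness, \eqref{eqgen} becomes $(s^\nu-G^{(\nu)})\widetilde{\mathpzc{q}}(s)=s^{\nu-1}h$. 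As $s^\nu$ is in the resolvent set of $G^{(\nu)}$ for every $s>0$, this forces $\widetilde{\mathpzc{q}}(s)=s^{\nu-1}(s^\nu-G^{(\nu)})^{-1}h=\widetilde{\sgL}_s h$, and uniqueness of the Laplace transform yields $\mathpzc{q}(t)=\sgL_t h$. The step I expect to be most delicate is item (3): beyond justifying the Fubini interchanges in the Bochner setting, the genuinely subtle point is passing from the Laplace-domain identity to a bona fide \emph{strong} solution—i.e. controlling membership in $\Dom(G)$ and strong differentiability in the Caputo sense—rather than merely matching transforms. The Stieltjes identity for $\mathpzc{l}$ is the clean computational fact that makes the whole transform calculation close.
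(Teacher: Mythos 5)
Your proof is correct, and at the structural level it follows the same skeleton as the paper's: both arguments reduce everything to the representation $\sgL_t h = \int_0^{\infty}\sub_u h\, \lambda_t(u)\,du$ of Equation \eqref{repr}, i.e.\ the subordinate semigroup \eqref{subsem} averaged against the law of the inverse stable subordinator, then obtain item (3) by invoking the Baeumer--Meerschaert result (\cite[Theorem 3.1]{fracCauchy}, which is exactly the "known fact" you appeal to when you say the inverse-subordinated family is the strong solution), and settle item (4) by the identical resolvent/Laplace-transform argument, $\l s^{\nu}-G^{(\nu)}\r\widetilde{v}(s)=0$ with $s^{\nu}$ in the resolvent set. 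Where you genuinely differ is in how the key representation is derived. The paper proves it probabilistically, pointwise in $t$: matching Laplace transforms of random variables gives $t\mathcal{L}\overset{d}{=}H_1(L_2(t))$, whence $\sgL_t h = \mathbb{E}\,\sub_{L_2(t)}h$ for each fixed $t$, and strong continuity (item (2)) is then read off from the path-continuity of $L_2$. You instead work in the Laplace domain of the whole operator family: the Stieltjes transform of the Lamperti density (your computation is correct, and it rests on the same Mittag-Leffler transform identity the paper uses) yields $\widetilde{\sgL}_s h = s^{\nu-1}\l s^{\nu}-G^{(\nu)}\r^{-1}h$, and you de-transform via vector-valued Laplace uniqueness. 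Note that your route makes the a priori strong continuity of $t\mapsto\sgL_t h$ load-bearing — without your item (2) the transform identity only determines $\sgL_t h$ for a.e.\ $t$ — whereas the paper's pointwise distributional identity gives the representation for free at every $t$; conversely, your route never needs the composition $H_1(L_2(t))$ and its density \eqref{densità del subordinatore subordinato}. One place where your argument is actually sharper than the paper's: for the invariance $\sgL_t:\Dom(G)\mapsto\Dom(G)$ you apply Hille's theorem directly to the closed operator $G$, getting $\sgL_t h\in\Dom(G)$ together with the commutation $G\sgL_t h=\sgL_t Gh$, which proves the literal statement of item (1); the paper's corresponding step is phrased through $G^{(\nu)}$ and \cite[Proposition 1.1.7]{abhn}, which most directly gives membership in $\Dom\l G^{(\nu)}\r$ and the commutation with $G^{(\nu)}$, so your version is the cleaner one. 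For completeness you should attach an explicit citation for the two facts you quote without proof — that the inverse-subordinated Feller semigroup is a strong solution of \eqref{eqgen} (\cite{fracCauchy}) and the vector-valued uniqueness theorem for Laplace transforms (e.g.\ \cite{abhn}) — but these are precisely the ingredients the paper itself cites, so no mathematical gap remains.
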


\begin{proof}
For the proof the following representation for $\sgL_t$:
\begin{align}
\sgL_t h \, = \, \int_0^{+\infty} \sub_s h \,  \lambda_t(s)ds
\label{repr}
\end{align}
is used, where $\lambda_t(\cdot)$ is the density of the inverse stable subordinator defined in Equation \eqref{definizione inverso}
and $\sub_s$ is defined in Equation \eqref{subsem}. In order to derive representation \eqref{repr} we use the following arguments. Consider $H_i := \lg H_i(t),\ t\geq 0 \rg$, $i=1,2$, two i.i.d. stable subordinators, having Laplace transform as in Equation \eqref{laplacestable}   and let us use the notation $\lg L_2(t),\ t\geq 0 \rg$ for the inverse process of $H_2$. 
Note that 
\begin{align*}
\mathbb{E}e^{-\eta H_1(L_2(t))}& = \mathbb{E} e^{-\eta ^\nu L_2(t)}= \mathcal{M}_{\nu}(-t^\nu \eta ^\nu)  \qquad \eta \geq 0,
\end{align*}
where we used the Laplace transform in Equation \eqref{laplacestable} and the fact that (see Remark 3.1 in \cite{MeerJap})

\begin{align}
    \int_0^{+\infty} e^{- \gamma s} \lambda_t(ds) \, = \, \mathcal{M}_\nu \l - \gamma  t^\nu \r, \qquad \gamma \geq 0.
\end{align}
We further observe that the Definition \ref{Definition of Lamperti distribution} of Lamperti distribution  gives 
\begin{align*}
    \mathbb{E} e^{-\eta t \mathcal{L}}&= \mathcal{M}_\nu(-t^\nu \eta ^\nu), \qquad \eta \geq0,
\end{align*}
so that
\begin{align}
    t\mathcal{L}\overset{d}{=} H_1(L_2(t)).
\end{align}
We now use the notation $\mathpzc{g}_t(\cdot)$ for the density of $H_1(L_2(t))$, i.e.
\begin{align}
    \mathpzc{g}_t(z) \, := \, \int_0^{+\infty} \mu_s(z) \lambda_t(s)ds \qquad z>0. \label{densità del subordinatore subordinato}
\end{align}

It follows that
\begin{align}
    \sgL_t h&= \int _0^{\infty} \sgM_{ty}h \, \mathpzc{l}(y)dy     \nonumber \\
    &=   \int _0^{\infty} \sgM_{z}h \, \mathpzc{g}_t(z)dz\notag \\
    &= \int _0^{\infty} \sub_{s}h \, \lambda_t(s)ds
\end{align}
where in the last step we used Equation \eqref{densità del subordinatore subordinato}, the definition of $\sub_t$ in Equation \eqref{subsem} and the Fubini theorem for Bochner integrals (see \cite[Theorem 1.1.9]{abhn}).

We are now ready to prove the Theorem.
\begin{enumerate}
    \item 
        The linearity simply follows from the definition \eqref{bb}. Similarly, the fact that $\sgL_t h\in C_0(E)$ for each $h\in C_0(E)$ and $t\geq0$ comes from the definition of Bochner integral provided that the integral exists. This can be  seen by using \cite[Theorem 1.1.4]{abhn} that applies here since $\left\| \mathcal{T}_{ty}h  \right\| \leq \left\| h \right\|$. The same argument leads to contractivity, indeed,
       \begin{align}
        \left\|  \sgL_t h \right\| \, =\, & \left\| \int_0^{+\infty} \sgM_{ty} h \mathpzc{l}(dy)  \right\| \notag \\
        \leq  \, & \int_0^{+\infty} \left\| \sgM_{ty}h  \right\| \, \mathpzc{l}(dy) \notag \\
        \leq \, & \left\| h \right\|, 
    \end{align}
   which implies that $\lg \sgL_t \rg_{t\geq0}$ is uniformly bounded. 
   So, the operators $\lg \sgL_t \rg$ are also well-defined and contractive.

    We have that $\lg\sgM_t \rg_{t \geq 0}$ is a Feller semigroup, therefore we can apply \cite[Lemma 4.5]{schillinglevy} to conclude that the subordinate semigroup $\lg \sub_t \rg_{t \geq 0}$ is also a Feller semigroup. It follows that $\sub_t$ maps $C_0(E)$ and $\text{Dom}(G)$ into themselves and it is strongly continuous. 
    
    We now prove that $\sgL_t h \in \Dom (G)$ if $h\in \Dom (G)$.  We shall use that $\text{Dom}(G)\subseteq \text{Dom}\l G^{(\nu)} \r $ and that $G^{(\nu)} \sub_y h = \sub_y G^{(\nu)} h$ for $h \in \text{Dom}(G)$. To get this result, it is sufficient to check the assumptions of \cite[Proposition 1.1.7]{abhn}. Specifically, one has that $\sgL_t h= \int _0^\infty \sub_sh \lambda _t(s) ds$ is well defined, i.e. 
\begin{align*}
    s \mapsto \sub_sh \lambda _t(s)
\end{align*}
is Bochner integrable for any $t>0$. Moreover, $\sub_sh \lambda _t(s)$    falls in the domain of $G^{(\nu)}$ because $G^{(\nu)}$ is the generator of $\sub_s$. 
Furthermore, we know that $G^{(\nu)}\sub_sh \lambda _t(s)= \sub_s (G^{(\nu)}h) \lambda _t(s) $ and $\sub_s (G^{(\nu)}h) \lambda _t(s)$ is Bochner integrable so that 
\begin{align*}
    s\mapsto G^{(\nu)}\sub_sh \lambda _t(s)
\end{align*}
is Bochner integrable as well, and the first item of the thesis follows. 

    \item Now we prove strong continuity. Again, the subordinate semigroup $\lg \sub_t \rg_{t \geq 0}$ is a Feller semigroup and it is strongly continuous. Furthermore, since $L_2(t)$ is the inverse of a strictly increasing subordinator, then it is path-continuous and then $\lg \sub_{L_2(t)} \rg_{t\geq0}$ is strongly continuous.
    
    By the representation \eqref{repr} we have that
    \begin{align}
        \sgL_t h \, = \, \mathbb{E} \sub_{L_2(t)}h,
        \notag 
    \end{align}
    which gives
    \begin{align*}
        || \sgL_{t_2} h - \sgL_{t_1} h || \leq \E || \sub_{L_2(t_2)}h  - \sub_{L_2(t_1)}h|| \qquad t_1,t_2 \geq0.
    \end{align*}
    Consider  the limit $t_1 \to t_2$. Using $\left\| \sub_{L_2(t) }h \right\| \leq \left\| h \right\|$ to apply the dominated convergence for Bochner integrals (\cite[Theorem 1.1.8]{abhn}), we get strong continuity.

    \item 
        As a consequence of \cite[Theorem 3.1]{fracCauchy} a Feller semigroup $\lg \sub_{t} \rg_{t\geq0}$ generated by $G^{(\nu)}\mid_{\text{Dom}(G)} = - \l -G \r^\nu$, integrated against the distribution of an inverse stable subordinator defines a mapping that solves Equation \eqref{eqgen}.
    Consistently, from Items (1) and (2) above, the functions 
    \begin{align*}
        t\mapsto \frac{\partial ^{\nu}}{\partial t^{\nu}} \sgL_t h\ \qquad \text{and} \qquad t\mapsto -(-G)^{\nu}\sgL_t h
    \end{align*}
    are well defined in the strong sense. 
    \item First, we prove that $t\longmapsto G^{(\nu)}\mathcal{Q}_th$ is bounded and (strongly) continuous for each $h\in \Dom(G)$. Indeed,
    \begin{align}
       G^{(\nu)} \mathcal{Q}_t h \, = \, \int_0^{+\infty} \mathcal{U}_s G^{(\nu)} h \lambda_t(s) ds \, = \, \mathcal{Q}_t G^{(\nu)} h
    \end{align}
    by the same arguments used in the proof of Item (1) and the strong continuity follows from the strong continuity and boundedness of $t \mapsto \mathcal{Q}_t G^{(\nu)} h$, since $G^{(\nu)} h \in C_0(E)$.
    
    Uniqueness follows from a Laplace transform argument. Note that $v(t) = \mathpzc{q}(t) - \mathcal{Q}_th$ solves the problem \eqref{eqgen} with $v(0)=0$. We can take the Laplace transform, $t \mapsto s$, of Equation \eqref{eqgen}, which is well defined by the boundedness of both sides. 
    
    Denoting by $\widetilde{v}$ the Laplace transform of $v$, one gets
    \begin{align}
        s^{\alpha} \widetilde{v}(s) = G^{(\nu)} \widetilde{v}(s) \iff \l s^\alpha -G^{(\nu)} \r \widetilde{v}(s) = 0.
    \end{align}
    Since the resolvent $\l s^\alpha-G^{(\nu)} \r^{-1}$ exists and is bounded, we have $\widetilde{v}(s)=0$ for any $s \geq 0$ being $\widetilde{v}(s)$ continuous.
\end{enumerate}

\end{proof}

In the next section we shall apply the theory of averaging Feller semigroups to transport processes. Before doing that, in the following remark we clarify the statement of Remark \ref{Anomalous diffusion: remark equazione governante}.
\begin{os}
    \label{fradens}
    From Proposition \ref{Convergence model 1: ugualianza fdd del mb lampertizzato} and the previous Theorem, the process $W_t$ is such that the function $q(x,t) = \mathds{E}^xu\l W_t\r$ solves a fractional kinetic equation of the form of Equation \eqref{eqgen}. In this case, the corresponding Feller process is a Brownian motion whose generator is such that $G|_{C_0^2 (\mathbb{R}^d)} = \Delta$, i.e., the Laplace operator. Therefore \eqref{eqgen} reduces to a fractional diffusion equation in both time and space. It follows that
    \begin{align}
        q(x,t) = \int_{\mathbb{R}^d} u(y) p_t(y-x)dy,
    \end{align}
    where $p_t(y-x)$ is a density for the random variable $W_t$, satisfies Equation \eqref{eqgen} with $G^\nu|_{C_0^2 (\mathbb{R}^d)} = -(-\Delta)^\nu$. Furthermore, $p_t(y-x) dy \to \delta_x(dy)$ weakly as $t \to 0$ and thus we can conclude that $p_t(y-x)$ satisfies the equation in the sense of fundamental solutions.
\end{os}

\section{Motion among random obstacles (Model 2)} \label{section model 2}

\subsection{Transport  with infinite mean flight times and random speed} \label{Subsection: Random flight Model 2}
We here consider another non-Markovian,  isotropic transport process, which differs from Model 1 because the speed of the particle is not fixed anymore. 
Specifically, we here assume that the speed is a random variable which is stochastically dependent on the random flight times. Again, the random number of direction changes up to time $t$ is $\ef_t$, i.e. the exchangeable fractional Poisson process defined in Section \ref{Exchengeable Fractional PP}. 
In more detail, the model is defined by the following assumptions.

\begin{assumption}{A2}{}\label{Isotropic: assumptions Model 2}
Suppose that in Definition \ref{Definizione isotropic transport process generica} the vectors $\l \mathcal{C}, J_1^{(\nu)},\ldots, J_n^{(\nu)}\r$, $n \geq1$, under all the probability measures $\mathds{P}^{(x,v)}$, have density given by
    \begin{align}
        \mathds{P}^{(x,v)}\lq \mathcal{C}\in d\tau, J_1^{(\nu)}\in dx_1,\ldots, J_n^{(\nu)} \in dx_n \rq = \l \lambda \frac{\tau}{c} \r^n e^{-\lambda \frac{\tau}{c} \sum_{i = 1}^nx_i}\ \mathpzc{l}\l \frac{\tau}{c}\r \frac{1}{c}\ d\tau dx_1\ldots dx_n \label{Random flight 2: densità congiunta C e Js}
    \end{align}
    where $\mathpzc{l}(\cdot)$ denotes the Lamperti distribution as in Equation \eqref{Misura Lamperti distribution}, which is equivalent to say that
    \begin{align}
        \l \mathcal{C}, J_1^{(\nu)}, \dots, J_n^{(\nu)} \r \overset{d}{=} \l  c \Lam , \frac{1}{\mathcal{L}}J_1, \dots, \frac{1}{\mathcal{L}}J_n \r \label{Random flight 2: uguaglianza in distribuzione C e Js}
    \end{align}
    where $\{J_n,\ n\geq 1\}$ are i.i.d. exponential variables of mean $1/ \lambda$.
    \end{assumption}
This assumption has the following implications: 
    \begin{enumerate}
    \item The joint distribution of the flight times $\lg J_n^{(\nu)},\ n\geq1 \rg$ is given by Equation \eqref{exchangeable fractional Poisson: joint survival function for (J_1, ..., J_n)} for some $\nu\in(0,1]$.
    \item The speed is $\mathcal{C} =  c \Lam $ where $c$ is a positive constant and $\mathcal{L}$ follows a Lamperti distribution of parameter $\nu$, as in Definition \ref{Definition of Lamperti distribution}.
    \item  The speed increases with $\Lam$ but at the same time each flight time decreases, such that the free path remains unchanged.
\end{enumerate}
We shall denote the random flight process of Assumptions \ref{Isotropic: assumptions Model 2} by
\begin{align*}
        \l \Xcl,  \Vcl\r,  \qquad \nu \in (0,1].
\end{align*}
In this setting, the flight times $\lg J_n^{(\nu)},\ n\geq1 \rg$ are dependent and they all have the same Mittag-Leffler distribution, so that $\mathbb{E}J_n^{(\nu)} =\infty$ for each $n\geq1$. Note that for $\nu = 1$ one re-obtains the isotropic Markovian random flight of Section \ref{Markovian random flight}.

The process is non-Markovian but, as shown in the following theorem, it is equal in distribution to a mixture of Markovian random flights of the type defined in Section \ref{Markovian random flight}.
In the same theorem we also determine the governing equation of $\l \Xcl, \Vcl \r$ which involves fractional power of the operators appearing in the linear Boltzmann equation \eqref{Boltzmann equation}.

We will use the family of operators $\lg\opmoddue_t\rg_{t\geq0}$ defined by
\begin{align}
    \opmoddue_t h (x,v) &:= \mathds{E}^{(x,v)} \lq h \l \Xcl,  \Vcl\r  \rq \qquad h \in  C_0\l \mathbb{R}^d \times S^{d-1} \r. \label{ooo} 
\end{align}
Specifically, we will consider the Boltzmann semigroup $\lg \sgM_t^{(c,\lambda)} \rg_{t\geq0}$ as defined in Equation \eqref{semigruppo Boltzmann} with representation as in Theorem \ref{Isotropic: teorema rappresentazione semigrouppo Boltzmann}. Then, by applying the Phillips formula \eqref{defGnu} to it, we are able to properly define the fractional power of the Boltzmann generator \eqref{Boltzmann generator} as

\begin{align} \label{Isotrpoic: fractional Boltzmann operator modello 2}
    -\bigg ( -c\, v\cdot \nabla -\lambda (L-\mathcal{I}) \bigg )^\nu h := \int_0^{\infty} \l \sgM_s^{(c,\lambda)} h - h \r \frac{\nu s^{-\nu - 1}}{\Gamma(1 - \nu)}ds, \qquad h\in D,
\end{align}
where $D$ is the domain of the Boltzmann generator as in Equation \eqref{Isotropic: dominio del generatore del semigruppo markoviano}.

\begin{te} \label{Model 2: teorema equazione Boltzmann frazionaria e uguaglianza in distribuzione}
    Let  $\l X^{(c,\lambda)}_t, V^{(c ,\lambda)}_t \r$ be the Markovian random flight defined in Section  \ref{Markovian random flight} and let $\mathcal{L}$ be a Lamperti random variable. Then
    \begin{align} \label{mixture Model 2}
        \l \Xcl,  \Vcl \r \overset{\text{d}}{=} \l X^{(c ,\lambda)}_{\mathcal{L}t}, V^{(c ,\lambda ) }_ { \mathcal{L}  t} \r.
    \end{align}
   Moreover, for $h \in  \text{D}$,   the function $ t \mapsto \opmoddue_t h$ defined in Equation \eqref{ooo} is the unique solution of the (abstract) fractional Boltzmann equation
    \begin{align}                           \frac{\partial ^\nu}{\partial   t^\nu} g(t) = -\bigg (-c\, v\cdot \nabla  -\lambda (L-\mathcal{I}) \bigg )^\nu g(t)   \notag 
    \end{align}
    subject to $g(0)= h$, in the sense of Theorem \ref{teorema equazione}, Item (4).
\end{te}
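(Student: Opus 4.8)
The plan is to deduce both assertions from the machinery already in place: establish the distributional identity \eqref{mixture Model 2} by a direct cancellation argument, and then recognize $\opmoddue_t$ as an averaged Feller semigroup of exactly the type treated in Theorem \ref{teorema equazione}, applied to the Boltzmann semigroup $\lg \sgM_t^{(c,\lambda)} \rg_{t\geq0}$.

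First I would prove \eqref{mixture Model 2}. Using the representation \eqref{Random flight 2: uguaglianza in distribuzione C e Js}, I replace the speed $\mathcal{C}$ and the flight times $\l J_1^{(\nu)}, J_2^{(\nu)},\ldots \r$ by $\l c\Lam, J_1/\Lam, J_2/\Lam, \ldots \r$, where $\lg J_k \rg$ are i.i.d.\ exponential of rate $\lambda$ and $\Lam$ is an independent Lamperti variable. Writing $\sigma_n := \sum_{i=1}^n J_i$, the jump times become $\tau_n = \sigma_n/\Lam$, so the counting process $\mathcal{N}_t = \max\lg n : \tau_n \leq t \rg$ of the Model~2 flight equals $N_{\Lam t}$, where $N$ is the rate-$\lambda$ Poisson process counting the $\sigma_n$. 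Substituting into the explicit form \eqref{Isotropic: definizione posizione transp process}, the completed legs give $\mathcal{C}\sum_{i=1}^{\mathcal{N}_t} J_i^{(\nu)} v_{i-1} = c\sum_{i=1}^{\mathcal{N}_t} J_i v_{i-1}$, while the residual leg becomes $\mathcal{C}\l t - \tau_{\mathcal{N}_t}\r v_{\mathcal{N}_t} = c\l \Lam t - \sigma_{\mathcal{N}_t}\r v_{\mathcal{N}_t}$; in both the factor $\Lam$ cancels against the contracted flight times. Since $\mathcal{N}_t = N_{\Lam t}$, the right-hand side is exactly the Markovian position at time $\Lam t$, and likewise $\Vcl = v_{\mathcal{N}_t} = V^{(c,\lambda)}_{\Lam t}$, which yields \eqref{mixture Model 2}.

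Next, within this coupling $\Lam$ is independent of the underlying Markovian flight $\l X^{(c,\lambda)}, V^{(c,\lambda)}\r$, so conditioning on $\Lam$ in the definition \eqref{ooo} gives
\begin{align*}
\opmoddue_t h(x,v) \, = \, \int_0^{\infty} \sgM_{lt}^{(c,\lambda)} h(x,v) \, \mathpzc{l}(l)\, dl,
\end{align*}
which is precisely the averaged-semigroup representation \eqref{bb} for the Feller semigroup $\lg \sgM_t \rg = \lg \sgM_t^{(c,\lambda)} \rg$. By Theorem \ref{Isotropic: teorema rappresentazione semigrouppo Boltzmann} and Equation \eqref{Boltzmann generator}, this is a Feller semigroup on $C_0\l \R^d \times S^{d-1}\r$ with generator $\l G^{(c,\lambda)}, D\r$, $G^{(c,\lambda)} = c\,v\cdot\nabla_x + \lambda(L - \mathcal{I})$. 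Hence Theorem \ref{teorema equazione} applies verbatim with $G = G^{(c,\lambda)}$: its Item~(3) shows that for $h\in D$ the map $t\mapsto \opmoddue_t h$ solves the abstract problem \eqref{eqgen}, which here reads the fractional Boltzmann equation with right-hand side $-(-G^{(c,\lambda)})^\nu = -\l -c\,v\cdot\nabla - \lambda(L - \mathcal{I})\r^\nu$, the fractional power being the one defined through Phillips' formula in \eqref{Isotrpoic: fractional Boltzmann operator modello 1} (an instance of \eqref{defGnu}); its Item~(4) then delivers uniqueness in the stated sense.

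The computations in the first two steps are routine; the one point genuinely needing care is the independence of $\Lam$ from the underlying Markovian flight in the coupling \eqref{Random flight 2: uguaglianza in distribuzione C e Js}, since this is exactly what legitimizes the conditioning that produces the averaged representation and the subsequent identification with \eqref{bb}. The principal, though not difficult, obstacle is to confirm that $\lg \sgM_t^{(c,\lambda)} \rg$ satisfies the Feller hypotheses of Theorem \ref{teorema equazione}, so that $-(-G^{(c,\lambda)})^\nu$ is well defined on $D$ and $t \mapsto -(-G^{(c,\lambda)})^\nu \opmoddue_t h$ is bounded and strongly continuous for $h\in D$, as required by Item~(4); this is inherited from the general theory once the semigroup and its generator are matched with those of Theorem \ref{teorema equazione}.
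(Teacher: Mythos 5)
Your proposal is correct and follows essentially the same route as the paper's proof: the cancellation of $\Lam$ in the coupling $\l \mathcal{C}, J_1^{(\nu)},\ldots\r \overset{d}{=} \l c\Lam, J_1/\Lam,\ldots\r$ to obtain \eqref{mixture Model 2}, then the identification $\opmoddue_t h = \int_0^\infty \sgM_{lt}^{(c,\lambda)} h\, \mathpzc{l}(dl)$ as an averaged Feller semigroup of the form \eqref{bb}, and finally an application of Theorem \ref{teorema equazione}. Your added details (the counting-process identity $\mathcal{N}_t = N_{\Lam t}$ and the explicit independence of $\Lam$ from the Markovian flight justifying the conditioning) merely make explicit what the paper leaves implicit.
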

\begin{proof}
Under the Assumptions \ref{Isotropic: assumptions Model 2}, the isotropic transport process in Definition \ref{Definizione isotropic transport process generica} reads

\begin{align*}
    \l \begin{matrix}
        \Vcl \\  \Xcl
    \end{matrix} \r &= 
    \l \begin{matrix}
        v_m \\  
        x + \sum_{k = 1}^{m} \mathcal{C}v_{k-1}J_k^{(\nu)} + \mathcal{C} v_m\l t - \sum_{k = 1}^{m} J_k^{(\nu)} \r
    \end{matrix} \r
    \qquad
    \sum_{k = 1}^{m} J_k^{(\nu)} \leq t < \sum_{k = 1}^{m+1} J_k^{(\nu)}
\end{align*}
for $m \geq 0$,
which can compactly be re-written as
\begin{align*}
\l \begin{matrix}
        \Vcl \\  \Xcl
    \end{matrix} \r  = 
    \sum_{m = 0}^{+\infty} \l \begin{matrix}
        v_m \\  
        x + \sum_{k = 1}^{m} \mathcal{C}v_{k-1}J_k^{(\nu)} + \mathcal{C} v_m\l t - \sum_{k = 1}^{m} J_k^{(\nu)} \r
    \end{matrix} \r 
    \mathds{1}_{\lg \sum_{k = 1}^{m} J_k^{(\nu)} \leq t < \sum_{k = 1}^{m+1} J_k^{(\nu)} \rg}.
\end{align*}

From Equation \eqref{Random flight 2: uguaglianza in distribuzione C e Js} we have, for each $t\geq0$,
\begin{align*}
    \l \begin{matrix}
        \Vcl \\  \Xcl
    \end{matrix} \r \overset{d}{=} 
    \l \begin{matrix}
        v_m \\  
        x + \sum_{k = 1}^{m} cv_{k-1}J_k + c v_m\l \mathcal{L}t - \sum_{k = 1}^{m} J_k \r
    \end{matrix} \r
    \qquad
    \sum_{k = 1}^{m} J_k \leq \mathcal{L}t < \sum_{k = 1}^{m+1} J_k
\end{align*} 
and the equality in distribution of Equation \eqref{mixture Model 2} follows. 

Hence we can re-write \eqref{ooo} as 
\begin{align*}
    \opmoddue_t h (x,v)&= \int _0^{\infty} \mathds{E}^{(x,v)} \lq h  \l X^{(c,\lambda )}_{lt}, V^{(c,\lambda)}_{lt} \r \rq \, \mathpzc{l}(dl)    = \int _0^{\infty} \sgM^{(c, \lambda )}_{lt} h (x,v) \,  \mathpzc{l}(dl)
\end{align*}
where $\sgM_t ^{(c,\lambda)}$ is the Markovian semigroup defined in Equation \eqref{semigruppo Boltzmann}. We recall that the family $\lg \sgM_t^{(c, \lambda)} \rg _{t\geq0}$ is a Feller semigroup, whose generator  \eqref{Boltzmann generator} has domain $D$, hence $\opmoddue_t$ is an averaged Feller semigroup of the form of Equation \eqref{bb}.
We fall under the assumptions of Theorem \ref{teorema equazione} from which the result immediately follows.
\end{proof}

\begin{os} \label{Model 2: Remark sull'uguaglianza in distribuzione del ranom flight}
    By using analogous arguments as in the above proof, recalling Equation \eqref{Random flight 2: uguaglianza in distribuzione C e Js} and observing that each $ \frac{1}{\Lam} J_i$  follows an exponential distribution of random mean $1 / (\lambda \Lam)$,  we also get
 \begin{align} 
        \l \Xcl,  \Vcl \r \overset{\text{d}}{=} \l X^{(c \mathcal{L},\lambda \mathcal{L})}_t, V^{(c \mathcal{L},\lambda \mathcal{L})}_t \r. \notag
    \end{align}
    The dependence of Equation \eqref{Random flight 2: densità congiunta C e Js} guarantees that each flight time is inversely proportional to the speed; therefore, the length of each free path remains constant under the scaling with the Lamperti variable.
\end{os}

\begin{os}
    Under Assumptions \ref{Isotropic: assumptions Model 1}, the random variable $\Xl$ is supported on a ball centered at the starting position $x$ of radius $ct$, and thus the mean square displacement, computed in  Proposition \ref{Model 1: proposition on mean squared  displacement}, is finite. 
        Instead, in the case of  Assumptions \ref{Isotropic: assumptions Model 2} the speed is random with a distribution supported on $(0 +\infty)$, and thus the radius of the ball can assume any positive value. It follows that the random position $\Xcl$ is not supported anymore on a set with finite measure. Furthermore, from the  mean squared  displacement of the Markovian random flight in Equation \eqref{Markovian: mean squared  displacement random flight} and using the representation of the process in Theorem \ref{Model 2: teorema equazione Boltzmann frazionaria e uguaglianza in distribuzione}, it is not hard to see that the random flight is superdiffusive with infinite  mean square displacement.
\end{os}

\subsubsection{Other types of fractional Boltzmann equations}
In some recent papers, other fractional Boltzmann equations have been studied. 
In \cite{RicciutiToaldo}, the authors considered  a semi-Markov transport process. Here, the flight times are i.i.d. Mittag-Leffler random variables, i.e. the counting process is a renewal process, also known as fractional Poisson process, see \cite{Beghin, Dicrescenzo, MainardiGorenfloScalas}.
Such a transport process is governed by the fractional Boltzmann equation
$$ \l \frac{\partial }{\partial t}-cv\cdot \nabla _x \r^\nu g(x,v,t) -\frac{t^{-\nu}}{\Gamma (1-\nu)}h(x+cvt,v) =  \lambda (L-\mathcal{I})  g(x,v,t)$$
under the condition $g(x,v,0)=h(x,v)$. The authors also proved that a suitable scaling limit leads to an anomalous diffusive process with continuous trajectories and finite velocity, say $\lg \widetilde{W}_t,\ t\geq 0\rg$, having a super-diffusive behavior, with the mean-squared  displacement growing as $\mathbb{E}\left| \widetilde{W}_t\right|_e^2\sim Ct^2$, $C>0$.

Another fractional Boltzmann equation with the form
\begin{align} 
\frac{\partial ^\nu}{\partial t ^\nu} g(x,v,t)=  c\, v\cdot \nabla _x  g(x,v,t)+\lambda (L-\mathcal{I}) g(x,v,t)
\end{align}
has been considered in \cite{Garra}.
In this case, the transport process is constructed as the Markovian random flight, time-changed with the inverse of a $\nu$-stable subordinator.

\subsection{Boltzmann-Grad approximation and anomalous diffusion} \label{Boltzmann-Grad approximation and anomalous diffusion model 2}

We here show that the same anomalous diffusion as in Section \ref{Diffusion model 1}, i.e., the Mittag-Leffler anomalous-diffusion process, can approximate a
different Lorentz model. We indeed  consider a Lorentz process with a Poissonian distribution of obstacle centres, defined as in Gallavotti \cite{Gallavotti}, except for the fact that the speed of the particle is here assumed to be random. Under a Boltzmann-Grad procedure, this process is proved to converge to the non-Markovian random flight defined as in Model 2. A further scaling limit leads to the Mittag-Leffler anomalous-diffusion process.

The speed of such Lorentz process is assumed to follow a Lamperti distribution as in Definition \ref{Definition of Lamperti distribution}, which is characterized by a power law decaying density. From a physical point of view this means that the gas particles are not in thermal equilibrium, in which case the speed has an exponentially decaying distribution. Indeed, it is known that a perfect gas in thermal equilibrium has a Maxwell-Boltzmann distribution for the particle velocities.

To fix the notation, consider a Poisson point process with intensity $\rho \in (0, \infty)$ for the obstacle centres. All the obstacles are spheres of radius $R$. We shall indicate with $\l \mathcal{X}^c_t,\mathcal{V}^c_t \r$ the Lorentz process as in Gallavotti \cite{Gallavotti}. The particle speed is given by the constant $c>0$, as indicated by the superscript. It is known that under the Boltzmann Grad limit
\begin{align} \label{Model 2: BG limit for Poisson}
    \rho \to \infty \qquad R \to 0 \qquad  \rho c \pi R^2 \to \lambda \in (0,\infty),
\end{align}
we have that
\begin{align}
    \l \mathcal{X}^c_t, \mathcal{V}^c_t \r   \overset{d}{\to} \l X_t^{(c, \lambda)}, V^{(c, \lambda)}_t \r \label{Model 2: Convergenza al random flight markovian Gallavotti}
\end{align}
 where $\l X^{(c, \lambda)}, V^{(c, \lambda)} \r$ is the Markovian random flight recalled in Section \ref{Markovian random flight}.

Now, consider another Lorentz process defined analogously as by Gallavotti \cite{Gallavotti}, where the particle speed is given by $\mathcal{C}= c \Lam $, being $\mathcal{L}$ a Lamperti random variable. It can be  proved that the free-flight time distribution has infinite mean analogously to what happens in Lemma \ref{Distribuzione free flight time o tempo di volo}.
For this new process, let us denote the position of the particle at time $t$ by $ \mathcal{X}_t^{(2)}$  and the unit velocity vector at time $t$, i.e. the direction of the particle at time $t$, by $ \mathcal{V}_t^{(2)}$.   We call $\l \X^{(2)}, \V^{(2)} \r := \lg \l \X^{(2)}_t, \V^{(2)}_t \r,\ t\geq0 \rg$ this Lorentz process in $\mathbb{R}^3 \times S^2$, with initial data $\l \X_0^{(2)}, \V_0^{(2)} \r$. The superscript indicates that this is the second Lorentz model considered in this paper. As in Section \ref{Sec: The Boltzmann-Grad limit modello 1} we denote 
 $P^{(x,v)} \l \cdot \r := P \l \cdot | \ \mathcal{X}_0^{(2)} = x, \mathcal{V}_0^{(2)} = v \r$. Under any $P^{(x,v)}$, we can write
\begin{align}
    \l \X^{(2)}_t, \V^{(2)}_t \r \overset{d}{=} \l \mathcal{X}^{ c \Lam }_t, \mathcal{V}^{ c \Lam }_t \r , \qquad \forall t\geq0 .\label{zzz}
\end{align}
This Lorentz process also depends on the parameters $\rho$ and $\nu$, and on the size of the obstacles $R$. Note that here the dependence on $\nu$ is only due to the random speed of the particle, being the random field Poissonian. As in the previous section, to simplify the notation, we will not make this dependence explicit.  Once again, for $\nu=1$, the speed is constant since $\mathcal{L}=1$ almost surely, and such Lorentz process coincides with that of Gallavotti's model. 
\begin{te}
Consider the Lorentz process $\l \X^{(2)}, \V^{(2)} \r$ defined above.
Under the Boltzmann-Grad limit  \ref{Model 2: BG limit for Poisson} we have \begin{align*}
    P^{(x,v)} \l \X^{(2)}_t\in dx', \V^{(2)}_t \in dv' \r \to \mathds{P}^{(x,v)} \l \Xcl  \in dx', \Vcl  \in dv' \r
\end{align*}
weakly, for all $t>0$ and for all $(x,v) \in \mathbb{R}^3 \times S^2$, where the limit process is the random flight defined in Section \ref{Subsection: Random flight Model 2}.
\end{te}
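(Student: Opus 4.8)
The plan is to reduce the statement to the classical Gallavotti Boltzmann-Grad convergence \eqref{Model 2: Convergenza al random flight markovian Gallavotti} by conditioning on the random speed. Fix $h \in C_b(\mathbb{R}^3 \times S^2)$ and initial data $(x,v)$; by the Portmanteau lemma it suffices to show that $\lim_{R\to0} E^{(x,v)} h\l \X^{(2)}_t, \V^{(2)}_t\r = \mathds{E}^{(x,v)} h\l \Xcl, \Vcl\r$, where $E^{(x,v)}$ denotes the expectation for the Lorentz process. Using the representation \eqref{zzz}, I would condition on the value $\mathcal{L}=l$ of the Lamperti variable governing the particle speed, writing
\begin{align*}
E^{(x,v)} h\l \X^{(2)}_t, \V^{(2)}_t\r = \int_0^\infty E^{(x,v)} h\l \mathcal{X}^{cl}_t, \mathcal{V}^{cl}_t\r\, \mathpzc{l}(dl),
\end{align*}
where, for each fixed $l>0$, the conditioned process $\l \mathcal{X}^{cl}_t, \mathcal{V}^{cl}_t\r$ is exactly Gallavotti's Lorentz process with \emph{constant} speed $cl$ among Poissonian obstacles of intensity $\rho$ and radius $R$. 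This is licit because the speed is assigned independently of the obstacle field, so conditioning on $\mathcal{L}=l$ leaves a genuine constant-speed Lorentz model in the same random field.

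The key observation is a scaling one: the Boltzmann-Grad constraint \eqref{Model 2: BG limit for Poisson}, namely $\rho c \pi R^2 \to \lambda$, entails $\rho (cl)\pi R^2 \to l\lambda$ for every fixed $l$. Hence the single scaling \eqref{Model 2: BG limit for Poisson} is simultaneously the correct Boltzmann-Grad limit for the speed-$cl$ Lorentz process, whose limiting free-flight rate is $l\lambda$. Applying the classical convergence \eqref{Model 2: Convergenza al random flight markovian Gallavotti} with $c$ replaced by $cl$ and $\lambda$ replaced by $l\lambda$ then yields, for each fixed $l>0$,
\begin{align*}
\lim_{R\to0} E^{(x,v)} h\l \mathcal{X}^{cl}_t, \mathcal{V}^{cl}_t\r = \mathds{E}^{(x,v)} h\l X_t^{(cl,\,l\lambda)}, V_t^{(cl,\,l\lambda)}\r .
\end{align*}

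To pass the limit inside the Lamperti integral I would invoke the dominated convergence theorem: the integrand is bounded in modulus by $\left\| h\right\|$ uniformly in $R$ and $l$, while $\mathpzc{l}$ is a probability measure, so the constant $\left\| h \right\|$ dominates. This gives
\begin{align*}
\lim_{R\to0} E^{(x,v)} h\l \X^{(2)}_t, \V^{(2)}_t\r = \int_0^\infty \mathds{E}^{(x,v)} h\l X_t^{(cl,\,l\lambda)}, V_t^{(cl,\,l\lambda)}\r \mathpzc{l}(dl) = \mathds{E}^{(x,v)} h\l \Xcl, \Vcl\r ,
\end{align*}
where the final identity is precisely the mixture representation of the Model 2 random flight recorded in Remark \ref{Model 2: Remark sull'uguaglianza in distribuzione del ranom flight}, i.e.\ $\l \Xcl, \Vcl\r \overset{d}{=} \l X^{(c\mathcal{L},\lambda\mathcal{L})}_t, V^{(c\mathcal{L},\lambda\mathcal{L})}_t\r$.

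I expect the main obstacle to be the clean justification that \eqref{Model 2: Convergenza al random flight markovian Gallavotti} may be invoked pointwise in $l$ with the rescaled speed $cl$: one must check that the recollision estimates underlying Gallavotti's theorem (the Poissonian $\nu=1$ analogue of the recollision argument in the proof of Theorem \ref{Theorem BG boltzmann grad}) remain valid for every $cl$, so that the conditional limit above genuinely holds for each $l>0$. Since a change of speed amounts only to a rescaling of spatial and temporal units, this is expected to carry over for all $l>0$, and then the dominated-convergence step, resting merely on the trivial bound $\left\| h \right\|$, completes the argument without further difficulty.
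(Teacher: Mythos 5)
Your proposal is correct and follows essentially the same route as the paper's own proof: both condition on the Lamperti variable via the representation \eqref{zzz}, apply Gallavotti's convergence \eqref{Model 2: Convergenza al random flight markovian Gallavotti} for each fixed $l$ with $c$ replaced by $cl$ (hence $\lambda$ by $\lambda l$), exchange limit and integral by dominated convergence using the trivial bound $\left\| h \right\|$, and conclude via the mixture representation of Remark \ref{Model 2: Remark sull'uguaglianza in distribuzione del ranom flight}. Your explicit remark that the single scaling $\rho c \pi R^2 \to \lambda$ automatically yields $\rho (cl) \pi R^2 \to \lambda l$ for every fixed $l$ is a point the paper leaves implicit, but it is the same argument.
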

\begin{proof}
Let us write
\begin{align*}
    E^{(x,v)} h\l \X^{(2)}_t, \V^{(2)}_t \r  = \int _{S^2} \int _{B_{ct}(x)} h(x', v') P^{(x,v)} \l \X^{(2)}_t \in dx', \V^{(2)}_t \in dv' \r.
\end{align*}
where $B_{ct}(x)$ indicates the ball centered in $x$ of radius $ct$.
To get the statement, we shall prove that
\begin{align}
    \lim _{R\to 0} E^{(x,v)}h\l \X^{(2)}_t, \V^{(2)}_t \r  = E^{(x,v)} h \l X^{(c \mathcal{L},\lambda \mathcal{L})}_t, V^{(c \mathcal{L},\lambda \mathcal{L})}_t \r \qquad \forall \, \,\,h\in C_b(\mathbb{R}^3\times S^2),  \label{Model 2: Val atteso di h(pos ,vel)}
\end{align}
 where $\l X^{(c, \lambda)}, V^{(c, \lambda)} \r$ is the Markovian random flight recalled in Section \ref{Markovian random flight} and the thesis will follow from the representation of the second random flight as in Remark \ref{Model 2: Remark sull'uguaglianza in distribuzione del ranom flight}.

Specifically, we have from Equation \eqref{zzz}
\begin{align*}
    \lim _{R\to 0} E^{(x,v)}h\l \X^{(2)}_t, \V^{(2)}_t \r  &= \lim_{R\to0} E^{(x,v)}h\l \X_t^{ c \Lam }, \V_t^{ c \Lam } \r \\
    &= \lim_{R\to0} \int_0^{\infty} \mathds{E}^{(x,v)}h\l \X_t^{l c}, \V_t^{l c} \r  P\l \Lam \in dl \r.
\end{align*}
Now we observe that $h$ is bounded and so is the expected value. Using the dominated convergence theorem in the previous Equation then yields
\begin{align*}
     \int_0^{\infty} \l \lim_{R\to0} \mathds{E}^{(x,v)}h\l \X_t^{cl}, \V_t^{cl} \r \r P\l \Lam \in dl \r = \int_0^{\infty} \mathds{E}^{(x,v)}h\l X_t^{(cl, \lambda l)}, V_t^{(cl, \lambda l)} \r P\l \Lam \in dl \r
\end{align*}
where we used Equations \eqref{Model 2: BG limit for Poisson} and \eqref{Model 2: Convergenza al random flight markovian Gallavotti}, i.e. the same convergence theorem of Gallavotti's model, where the speed $c$   is replaced by $cl$ and hence $\lambda$ is replaced by $\lambda l$. The equality \eqref{Model 2: Val atteso di h(pos ,vel)} immediately follows.
\end{proof}
In the following theorem we prove that under an appropriate scaling limit the random flight of model 2 converges to the same anomalous diffusion as in Section \ref{Diffusion model 1}.

\begin{te}
Let $W$ be a Mittag-Leffler anomalous-diffusion process as in Definition \ref{Definition: anomalous diffusion}. Let $B$ be the corresponding Brownian motion, in the sense of Proposition \ref{Convergence model 1: ugualianza fdd del mb lampertizzato}. Then, under the scaling limit
\begin{align}
    c\to \infty \qquad \lambda \to \infty \qquad \frac{c^2}{\lambda} = 1 \label{Model 2: diffusive limit}
\end{align}
we have that
\begin{align*}
     \lg X_{(2)}^{\nu}(t)\rg \overset {fdd}{\longrightarrow}\{W_t\} .
\end{align*}
\end{te}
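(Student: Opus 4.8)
The plan is to reduce the statement to the classical diffusive limit by exploiting the representation of the Model 2 random flight as a Lamperti time-change of the Markovian random flight, established in Theorem \ref{Model 2: teorema equazione Boltzmann frazionaria e uguaglianza in distribuzione}, and then to identify the limiting object through Proposition \ref{Convergence model 1: ugualianza fdd del mb lampertizzato}. The argument runs parallel to the one for the Model 1 diffusive limit, but is in fact cleaner, because here the Lamperti variable rescales the time parameter directly rather than the intensity $\lambda$, so no appeal to the identity \eqref{Lamperti: L = 1/L} is required.

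First, I would fix times $0 \leq t_1 < \ldots < t_n$ and continuity sets $A_1, \ldots, A_n$ for the limit law, and condition on the Lamperti variable $\mathcal{L}$. Using the equality in distribution \eqref{mixture Model 2}, this gives
\begin{align*}
\mathds{P}^{(x,v)}\l X_{(2)}^{\nu}(t_1) \in A_1,\ldots, X_{(2)}^{\nu}(t_n) \in A_n \r = \int_0^\infty \mathds{P}^{(x,v)}\l X^{(c,\lambda)}_{l t_1}\in A_1,\ldots, X^{(c,\lambda)}_{l t_n}\in A_n \r \mathpzc{l}(dl),
\end{align*}
where $\l X^{(c,\lambda)}_t, V^{(c,\lambda)}_t \r$ is the Markovian random flight of Section \ref{Markovian random flight}.

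Second, I would pass to the scaling limit \eqref{Model 2: diffusive limit}. For each fixed $l > 0$ the times $l t_1, \ldots, l t_n$ are fixed, so Proposition \ref{limite diffusivo classico} with $D = 1$ yields the finite-dimensional convergence of the integrand to $\P^x\l B_{l t_1}\in A_1,\ldots, B_{l t_n}\in A_n \r$. Since the integrand is bounded by $1$ and $\mathpzc{l}(dl)$ is a probability measure, dominated convergence lets me exchange limit and integral, obtaining
\begin{align*}
\lim_{c\to\infty} \mathds{P}^{(x,v)}\l X_{(2)}^{\nu}(t_1) \in A_1,\ldots, X_{(2)}^{\nu}(t_n) \in A_n \r = \P^x\l B_{\mathcal{L}t_1}\in A_1,\ldots, B_{\mathcal{L}t_n}\in A_n \r.
\end{align*}

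Finally, I would invoke Proposition \ref{Convergence model 1: ugualianza fdd del mb lampertizzato}, which asserts $\{W_t\} \overset{fdd}{=} \{B_{\mathcal{L}t}\}$, to identify the right-hand side with $\P^x\l W_{t_1}\in A_1,\ldots, W_{t_n}\in A_n \r$, giving the claimed finite-dimensional convergence. I do not anticipate a genuine obstacle: the only points deserving care are checking that the dominated convergence hypotheses hold (immediate, by the uniform bound by $1$) and that the sets $A_i$ are continuity sets for the Gaussian limit, so that Proposition \ref{limite diffusivo classico} applies at each $l$. The substantive work has already been carried out in the cited results, and this theorem essentially assembles them.
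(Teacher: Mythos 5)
Your proposal is correct and follows essentially the same route as the paper's own proof: the mixture representation from Theorem \ref{Model 2: teorema equazione Boltzmann frazionaria e uguaglianza in distribuzione}, dominated convergence, the classical diffusive limit of Proposition \ref{limite diffusivo classico} with $D=1$ applied at each fixed $l$, and identification of the limit via Proposition \ref{Convergence model 1: ugualianza fdd del mb lampertizzato}. Your observation that, unlike in the Model~1 case, no appeal to the identity \eqref{Lamperti: L = 1/L} is needed also matches the paper exactly.
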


\begin{proof}
Without loss of generality, we shall put $c^2 = \lambda $ and consider the one-variable limit as $c\to\infty$.
The proof is based on the representation in Theorem \ref{Model 2: teorema equazione Boltzmann frazionaria e uguaglianza in distribuzione}. Firstly, we observe that
\begin{align*}
   \mathds{P}^{(x,v)}\l X_{(2)}^{\nu}(t_1) \in A_1,\ldots, X_{(2)}^{\nu}(t_n) \in A_n \r 
    =  \int_{0}^{\infty}\mathds{P}^{(x,v)} \l X^{(c,\lambda)}_{l t_1}\in A_1,\ldots, X^{(c,\lambda)}_{l t_n}\in A_n \r \mathpzc{l}(dl)
\end{align*}
where $A_i$ are continuity sets, $i=1,\ldots,n$, $n\geq1$. We now apply the diffusive limit \eqref{Model 2: diffusive limit} to both members of the above equation. By the dominated convergence theorem, 

\begin{align*}
  &  \lim_{c\to\infty} \mathds{P}^{(x,v)}\l X_{(2)}^{\nu}(t_1) \in A_1,\ldots, X_{(2)}^{\nu}(t_n) \in A_n \r\notag \\ = &\lim_{c\to\infty} \int_{0}^{\infty}\mathds{P}^{(x,v)} \l X^{(c,\lambda)}_{l t_1}\in A_1,\ldots, X^{(c,\lambda)}_{l t_n}\in A_n \r \mathpzc{l}(dl) \\
    =& \int_{0}^{\infty} \lq \lim_{c\to\infty} \mathds{P}^{(x,v)} \l X^{(c,\lambda)}_{l t_1}\in A_1,\ldots, X^{(c,\lambda)}_{l t_n}\in A_n \r \rq \mathpzc{l}(dl) \\
    =&  \mathbb{P}^x\l B_{t_1 \Lam} \in A_1,\ldots, B_{t_n \Lam}  \in A_n \r
\end{align*}
where we used Proposition \ref{limite diffusivo classico} with $D = 1$.
Finally, by Proposition \ref{Convergence model 1: ugualianza fdd del mb lampertizzato} we get 
\begin{align*}
    \lim_{c\to\infty} \mathds{P}^{(x,v)}\l X_{(2)}^{\nu}(t_1) \in A_1,\ldots, X_{(2)}^{\nu}(t_n) \in A_n \r = \mathbb{P}^x\l W_{t_1} \in A_1,\ldots, W_{t_1} \in A_n \r.
\end{align*}
\end{proof}


\section*{Acknowledgements}
All the authors acknowledge financial support under the
National Recovery and Resilience Plan (NRRP), Mission 4, Component 2, Investment 1.1, Call for tender No. 104 published on 2.2.2022 by the Italian Ministry
of University and Research (MUR), funded by the European Union – NextGenerationEU– Project Title “Non–Markovian Dynamics and Non-local Equations”
– 202277N5H9 - CUP: D53D23005670006 - Grant Assignment Decree No. 973
adopted on June 30, 2023, by the Italian Ministry of Ministry of University and
Research (MUR).

The author B. Toaldo would like to thank the Isaac Newton Institute for Mathematical Sciences, Cambridge, for support and hospitality during the programme Stochastic systems for anomalous diffusion, where work on this paper was undertaken. This work was supported by EPSRC grant EP/Z000580/1.

E. Scalas also acknowledges financial support under the \\ 000317\_24\_RICERCA\_UNIV\_2023\_PROG\_MEDI\_SCALAS - RICERCA ATENEO 2023 - SCALAS PROGETTI MEDI.
 The title of the project is "Approximation of stochastic processes by means of sums of random telegraph processes"

The authors are partially supported by Gruppo Nazionale per l’Analisi Matematica, la Probabilità e le loro Applicazioni (GNAMPA-INdAM).

\appendix

\section{Existence of Mittag-Leffler point process} \label{Appendix: Existence of Mittag-Leffler point process}
As a first step, we demonstrate the existence of \(\Pi\) on finite Borel subsets of \(\mathbb{R}^3\).

\begin{lem}
\label{lemmaexistfinite}
    Let $S$ be a Borel set of $\mathbb{R}^3$ with finite Lebesgue measure $|\cdot|$: $|S|<+\infty$. Then define
        \begin{align}
        \Pi_{S}  := \sum_{k=1}^{\ef} \delta_{C_k}
    \end{align}
    where $\lg C_k \rg_{k \in \lg 1, \ldots, \ef \rg}$ is a sequence of i.i.d. r.v.'s uniform on $S$ and independent of $\ef$ which, in its turn, is a r.v. with a fractional Poisson distribution
    \begin{align}
        P \l \ef = n \r \, = \, \frac{1}{n!} \l \rho |S| \r^n (-1)^n \mathcal{M}_\nu^{(n)} \l -\rho^\nu |S|^\nu \r,
    \end{align}
    where $\rho \in \mathbb{R}^+$.
 Then $\Pi_S$ is a measure as in Definition \ref{Fractional Poisson field} with $\mu(\cdot) = |\cdot \cap S|/ |S|$.
\end{lem}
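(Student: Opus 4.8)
The plan is to exploit the mixed-Poisson structure hidden in the law of $\ef$, which reduces the verification of the defining relation \eqref{Fractional Poisson field formula} to the classical Poissonian computation carried out conditionally on a Lamperti variable. Throughout I would treat $\nu\in(0,1)$, the case $\nu=1$ being the ordinary Poisson field of \cite{Gallavotti}, for which the claim is classical.

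First I would rewrite the distribution of the total count $\ef$ as a Poisson mixture directed by a Lamperti variable $\Lam$. Starting from Definition \ref{Definition of Lamperti distribution}, i.e. $\mathcal{M}_\nu(-z^\nu) = \int_0^\infty e^{-zl}\,\mathpzc{l}(l)\,dl$, and differentiating $N$ times in $z$ under the integral sign, one obtains $(-1)^N\mathcal{M}_\nu^{(N)}(-\rho^\nu|S|^\nu) = \int_0^\infty l^N e^{-\rho|S|l}\,\mathpzc{l}(l)\,dl$. Substituting this into the assumed law of $\ef$ gives
\[
  \P(\ef = N) = \int_0^\infty \frac{(\rho|S|\,l)^N}{N!}\,e^{-\rho|S|l}\,\mathpzc{l}(l)\,dl ,
\]
so that, conditionally on $\Lam = l$, the variable $\ef$ is Poisson with mean $\rho|S|l$, while the points $C_1,\dots,C_{\ef}$ stay i.i.d.\ uniform on $S$ and independent of $\Lam$.

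Next I would condition on $\Lam = l$. By the previous step $\ef$ is then Poisson of mean $\rho|S|l$, and, since the $C_k$ are i.i.d.\ uniform on $S$, each point falls in $B_j$ with probability $|B_j|/|S|$ independently of the others. By the Poisson splitting property (a multinomial allocation of a Poisson total yields independent Poisson counts; see, e.g., \cite{kallenberg}), the counts $\ef(B_1),\dots,\ef(B_n)$ are, conditionally on $\Lam=l$, independent with $\ef(B_j)\sim\mathrm{Poisson}(\rho l|B_j|)$, whence
\[
  \P\l \bigcap_{j=1}^n \lg \ef(B_j)=k_j \rg \,\Big|\, \Lam = l \r = \prod_{j=1}^n e^{-\rho l|B_j|}\,\frac{(\rho l|B_j|)^{k_j}}{k_j!} ;
\]
note that no constraint is placed on the count outside $\cup_j B_j$, which is precisely why independence of the counts on disjoint regions is enough. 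Integrating against the law of $\Lam$ and writing $k=\sum_{j}k_j$ then gives
\[
  \P\l \bigcap_{j=1}^n \lg \ef(B_j)=k_j \rg \r = \frac{\prod_{j=1}^n (\rho|B_j|)^{k_j}}{\prod_{j=1}^n k_j!}\int_0^\infty l^{\,k}\, e^{-\rho \l \sum_{j=1}^n |B_j| \r l}\,\mathpzc{l}(l)\,dl ,
\]
and the remaining integral is exactly the one from the first step with $z=\rho\sum_j|B_j|$, equal to $(-1)^k\mathcal{M}_\nu^{(k)}(-(\sum_{j=1}^n \rho|B_j|)^\nu)$. This reproduces \eqref{Fractional Poisson field formula} verbatim, with $|B_j|=|B_j\cap S|$ since all points lie in $S$, which is the assertion of the lemma.

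The genuinely delicate point is the first step: justifying that the $N$-fold $z$-derivative of the Laplace transform $\mathcal{M}_\nu(-z^\nu)$ may be moved inside the integral, as this is what converts the Mittag-Leffler derivatives appearing in the law of $\ef$ into Poisson weights. This rests on the smoothness of Laplace transforms of finite measures on $(0,\infty)$ together with the power-law tail $\mathpzc{l}(l)\sim c\,l^{-\nu-1}$ of the Lamperti density: for every $N$ and every $z>0$ the integrand is dominated, uniformly on $z\geq z_0>0$, by $l^N e^{-z_0 l}\mathpzc{l}(l)$, which is integrable because the exponential damping beats the polynomial factor at infinity while $\mathpzc{l}(l)\sim c\,l^{\nu-1}$ is integrable near the origin. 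The strict positivity $z=\rho|S|>0$ (and $z=\rho\sum_j|B_j|>0$ whenever the $B_j$ are not all Lebesgue-null) is exactly what makes this damping available; once the interchange is secured, the rest is bookkeeping.
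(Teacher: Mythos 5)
Your proof is correct, but it takes a genuinely different route from the paper's. The paper argues via Laplace functionals: it computes $E e^{-\Pi_S f}=\mathcal{M}_\nu\l-\rho^\nu\l\int_S(1-e^{-f(s)})ds\r^\nu\r$ for positive continuous compactly supported $f$ (using the very same mixed-Poisson identity you use, namely $(-1)^m\mathcal{M}_\nu^{(m)}(-(\rho|S|)^\nu)\frac{(\rho|S|)^m}{m!}=\int_0^\infty \frac{(\rho|S|l)^m}{m!}e^{-\rho|S|l}\,\mathpzc{l}(dl)$), then computes the Laplace functional of a random measure satisfying Definition~\ref{Fractional Poisson field} on simple functions $f=\sum_j a_j\mathds{1}_{A_j}$ directly from formula \eqref{Fractional Poisson field formula}, extends to general $f$ by monotone and dominated convergence, and concludes by the uniqueness criterion \cite[Corollary 10.1]{kallenberg}. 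You instead verify formula \eqref{Fractional Poisson field formula} itself: you unmix the fractional Poisson law of $\ef$ into a Poisson law of random mean $\rho|S|\Lam$, apply Poisson splitting conditionally on $\Lam=l$, and integrate back against the Lamperti density, recovering $(-1)^k\mathcal{M}_\nu^{(k)}$ evaluated at $-\rho^\nu(\sum_j|B_j\cap S|)^\nu$. Since Definition~\ref{Fractional Poisson field} is literally a statement about joint counts on disjoint Borel sets, this direct verification is logically sufficient and arguably more elementary: it needs neither Laplace functionals nor the Kallenberg criterion, and it sidesteps the slightly awkward step in the paper's proof where a random measure ``as in Definition~\ref{Fractional Poisson field}'' must be posited before existence is settled. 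What the paper's functional route buys in exchange is the closed form of the Laplace functional, which is reused immediately in Lemma~\ref{lemmarestriction} to identify $\Pi$ as a Cox process directed by $\Lam\rho|\cdot|$ and to treat restrictions. Two points you should make explicit: (i) the Lamperti variable $\Lam$ is not part of the given data, so either realize the triple $\l\Lam,\ef,\{C_k\}\r$ with the correct joint law on an enlarged probability space --- legitimate, since the law of $\Pi_S$ depends only on the marginal of $\l\ef,\{C_k\}\r$ --- or, equivalently, recast the splitting step as a purely analytic identity by summing the multinomial allocation over the total count $N$ and exchanging the sum with the $\mathpzc{l}(dl)$ integral by Tonelli, all terms being nonnegative; (ii) your domination argument for differentiating under the integral sign is sound and rests exactly on the behaviour $\mathpzc{l}(y)\sim Cy^{\nu-1}$ at the origin and $\sim Cy^{-\nu-1}$ at infinity, which is read off from \eqref{Misura Lamperti distribution}; note that this same interchange is performed without comment elsewhere in the paper, so your added care is a strengthening rather than a deviation.
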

\begin{proof}
   We show that $\Pi_{S}$ defined as above is a random measure that satisfies the Definition \ref{Fractional Poisson field} for $\mu = |\cdot \cap S|/|S|$. Hence, denoting by $\Pi_{S}^\prime$ a random measure as in Definition \ref{Fractional Poisson field}, we use \cite[Corollary 10.1]{kallenberg} and thus we show that
   \begin{align}
       E e^{-\Pi_{S} f} = E e^{-\Pi_{S}^\prime f} 
   \end{align}
   for any $f$ positive, continuous with compact support. We have that
   \begin{align}
       E e^{-\Pi_{S} f} \, = \, &E e^{-\sum_{k=1}^{\ef} f (C_k)} \, = \, E \l E e^{-f(C_1)} \r^{\ef} \notag \\
       = \, & \sum_m \l \frac{1}{|S|}\int_{S} e^{-f(s)} ds  \r^m P \l \ef = m \r \notag \\
       = \, & \sum_m \l \frac{1}{|S|}\int_{S} e^{-f(s)} ds  \r^m \frac{1}{m!}   \l |S|\rho \r^m (-1)^m \mathcal{M}_\nu^{(m)} \l -\l |S|\rho \r^\nu \r \notag \\
       = \, & \int_0^{+\infty} \sum_m \l \frac{1}{|S|}\int_{S} e^{-f(s)} ds  \r^m \frac{l^m(\rho|S|)^m}{m!}e^{-l\rho |S|} \mathpzc{l}(dl) \notag \\
       = \, & \int_0^{+\infty }e^{- l|S|\rho\frac{1}{|S|}\int_{S}\l 1- e^{-f(s)} \r ds } \mathpzc{l}(dl) \,
       = \,  \mathcal{M}_\nu \l -\rho^\nu \l \int_{S} \l 1-e^{-f(s)} \r ds \r^\nu \r,
   \end{align}
   where $l$ is the realisation of a Lamperti random variable $\mathcal{L}$ of parameter $\nu$.
   Now, take a simple function $f = \sum_j a_j \mathds{1}_{A_j}$. Then we have that
   \begin{align}
       E e^{-\Pi_{S}^\prime f} \, &= \, E e^{- \sum_j a_j \Pi_{S}^\prime \mathds{1}_{A_j}} \, = \, E \prod_j e^{- a_j \Pi_{S}^\prime \mathds{1}_{A_j}}  \notag \\
        &=\sum_{k_1 \cdots k_j}  \prod_j  \frac{1}{k_1! \cdots k_j!}e^{-a_j k_j } \rho^{k_j} \l |A_j \cap S|\r^{k_j} (-1)^{\sum_j k_j} \mathcal{M}_\nu^{\l \sum_j k_j \r} \l - \rho^\nu \l \sum_j \l |A_j \cap S| \r^{k_j} \r^\nu \r \notag \\
       &= \sum_{k_1 \cdots k_j} \prod_j \frac{1}{k_1! \cdots k_j!}e^{-a_j k_j} \rho^{k_j} \l |A_j \cap S| \r^{k_j} \int_0^{+\infty} e^{-\rho \sum_j \l |A_j \cap S|\r^{k_j} s } s^{\sum_jk_j} (-1)^{\sum_j k_j} \mathpzc{l}(ds) \notag \\
   & = \int_0^{+\infty} e^{-s\rho \sum_{j} |A_j \cap S| \l 1-e^{-a_j} \r} \mathpzc{l}(ds) = \mathcal{M}_\nu \l -\rho^\nu \l \sum_j |A_j \cap S| \l 1-e^{-a_j} \r \r^\nu \r \notag \\
    & = \mathcal{M}_\nu \l -\rho^\nu \l  \int_{S } \l 1-e^{-f(s)} \r ds \r^\nu \r .    \label{simplefunction}
   \end{align}
   Pick now $f$ such that $f_k \uparrow f$ for $f_k$ simple. Then, by monotone convergence we get that $\Pi_{S}^\prime f_k \uparrow \Pi_{S}^\prime f$ as well as
   \begin{align}
\int_{S} \l 1-e^{-f_k(s)} \r ds \uparrow  \int_{S} \l 1-e^{-f(s)} \r ds.
   \end{align}
Hence, by dominated convergence we obtain $ E e^{-\Pi_{S}^\prime f}= E e^{-\Pi_{S} f}$.
\end{proof}
It turns out that $\Pi$ is a Cox process in the sense of \cite[Page 180]{kallenberg}. Take a random measure on $(\mathbb{R}^3, \mathcal{B}(\mathbb{R}^3))$, say $\Xi$. A point process $\Pi$ on $\mathbb{R}^3$ is said to be a Cox process directed by $\Xi$ if it is conditionally Poisson with the intensity $\Xi$, i.e., $\mathbb{E}(\Pi \mid \Xi) = \Xi$ a.s..
\begin{lem}
\label{lemmarestriction}
Define the measure on $\mathcal{B}(\mathbb{R}^3)$ as $\Xi (\cdot)\coloneqq \mathcal{L}\rho | \cdot |$ where $\mathcal{L}$ is a Lamperti random variable and $\rho \in \mathbb{R}^+$.
A random measure on $\l \mathbb{R}^3, \mathcal{B} (\mathbb{R}^3), |\cdot| \r$ as in Definition \ref{Fractional Poisson field} on $\mathbb{R}^3$ is a Cox point process directed by $\Xi$. The restriction of $\Pi$ on a Borel set $B$ with $|B|<+\infty$ is a random measure of the same type on $S=B$ and thus it has the representation given in Lemma \ref{lemmaexistfinite}.
\end{lem}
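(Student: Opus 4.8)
The plan is to realise $\Pi$ explicitly as a Cox (doubly stochastic Poisson) process and then to read off both assertions from its mixed-Poisson structure. First I would observe that $\Xi(\cdot)=\Lam\rho|\cdot|$ is a genuine locally finite, diffuse random measure on $\l \R^3,\mathcal{B}(\R^3)\r$: for bounded Borel $B$ one has $\Xi(B)=\Lam\rho|B|<\infty$ almost surely, and $\Xi$ is an a.s.\ multiple of Lebesgue measure, hence non-atomic. By the standard construction of Cox processes \cite[p.~180]{kallenberg} there then exists a point process $\Pi$ that is, conditionally on $\Lam=l$, a homogeneous Poisson process of intensity $l\rho$ on $\R^3$.

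The central step is to verify that this $\Pi$ obeys Definition \ref{Fractional Poisson field}, which by \cite[Corollary 10.1]{kallenberg} pins down the law uniquely and hence yields simultaneously existence on $\R^3$ and the Cox description. For mutually disjoint finite Borel sets $B_1,\dots,B_n$ and integers $k_1,\dots,k_n\in\N$ with $k:=\sum_{j}k_j$, I would condition on $\Lam=l$, use the independence of Poisson counts over disjoint sets, and integrate against the Lamperti law \eqref{Misura Lamperti distribution}:
\begin{align}
\P\l \bigcap_{j=1}^n\lg \ef(B_j)=k_j\rg\r=\prod_{j=1}^n\frac{(\rho|B_j|)^{k_j}}{k_j!}\int_0^{\infty} l^{\,k}\,e^{-l\rho\sum_{j}|B_j|}\,\mathpzc{l}(dl).
\end{align}
Differentiating the Laplace transform \eqref{lamperti laplace} $k$ times in its argument gives $\int_0^\infty l^{k}e^{-lz}\,\mathpzc{l}(dl)=(-1)^k\mathcal{M}_\nu^{(k)}(-z^\nu)$ for $z>0$; taking $z=\rho\sum_{j}|B_j|$ reproduces exactly formula \eqref{Fractional Poisson field formula}.

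For the restriction claim I would use that the restriction of a Cox process to a Borel set $B$ is again a Cox process, now directed by $\Xi(\cdot\cap B)=\Lam\rho|\cdot\cap B|$, whose total mass $\Lam\rho|B|$ is a.s.\ finite. Hence $\Pi|_B$ is a mixed Poisson process on $B$: conditionally on $\Lam=l$ it places a $\mathrm{Poisson}(l\rho|B|)$ number of points i.i.d.\ uniformly on $B$, and integrating over $\Lam$ turns that number into a fractional-Poisson count. This is precisely the process built in Lemma \ref{lemmaexistfinite} with $S=B$; equivalently, restricting \eqref{Fractional Poisson field formula} to sets contained in $B$ shows $\Pi|_B$ satisfies Definition \ref{Fractional Poisson field} with $\mu=|\cdot\cap B|/|B|$, and uniqueness via \cite[Corollary 10.1]{kallenberg} identifies it with that representation.

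The remaining obstacles are essentially bookkeeping. One must check that the conditional Poisson law is jointly measurable in $l$ so that Fubini legitimately interchanges the $\Lam$-integral with the sums and products, and one must justify the $k$-fold differentiation under the integral in \eqref{lamperti laplace}. The latter is harmless because the Lamperti density behaves like $y^{-\nu-1}$ at infinity and like $y^{\nu-1}$ near the origin, so $l^{k}e^{-lz}\mathpzc{l}(dl)$ is dominated by the integrable $l^{k}e^{-lz_0}\mathpzc{l}(dl)$ uniformly for $z\ge z_0>0$; everything else is a standard Cox-process fact.
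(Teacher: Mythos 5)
Your proposal is correct, and it reaches the lemma by a genuinely different technical route than the paper. The paper works throughout at the level of Laplace functionals: it represents $\Pi$ as a Poisson random measure $\mathcal{P}^\Xi$ with randomized intensity, computes $E e^{-\Pi f}=\int_0^{\infty}Ee^{-\mathcal{P}^{s}f}\,\mathpzc{l}(ds)$, matches this against the Laplace functional of a process as in Definition \ref{Fractional Poisson field} (first for simple functions, then by monotone and dominated convergence, exactly as in the proof of Lemma \ref{lemmaexistfinite}), concludes equality in law from \cite[Corollary 10.1]{kallenberg}, verifies the Cox property $\mathbb{E}(\Pi\mid\Xi)=\Xi$ explicitly via independence and the freezing lemma, and settles the restriction claim by yet another Laplace-functional computation. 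You instead verify Definition \ref{Fractional Poisson field} head-on at the level of finite-dimensional count distributions: conditioning on $\Lam=l$, using independence of Poisson counts over disjoint sets, and evaluating $\int_0^\infty l^{k}e^{-lz}\,\mathpzc{l}(dl)=(-1)^k\mathcal{M}_\nu^{(k)}(-z^{\nu})$ by $k$-fold differentiation of \eqref{lamperti laplace}, which reproduces formula \eqref{Fractional Poisson field formula} directly; your mixed-Poisson description of $\Pi|_B$ (a $\mathrm{Poisson}(l\rho|B|)$ number of i.i.d.\ uniform points on $B$, then mixing over $\Lam$) recovers the representation of Lemma \ref{lemmaexistfinite} without recomputing functionals. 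This is arguably the more elementary calculation, since the Definition is itself phrased through these joint count laws. Two small repairs are needed to make it airtight: \cite[Corollary 10.1]{kallenberg} is the Laplace-functional uniqueness criterion that the paper uses, so to conclude from your count computations you should instead invoke the (equally standard) fact that joint distributions of counts on disjoint bounded Borel sets determine the law of a point process, overlapping families being recovered by additivity and a monotone-class argument; and your route leaves the defining property $\mathbb{E}(\Pi\mid\Xi)=\Xi$ implicit — it holds by construction for a Cox process, but the paper checks it explicitly via the freezing lemma, which is worth a sentence. In exchange, the paper's functional formalism treats existence, the Cox identification, and the restriction uniformly in one framework, while your approach buys a direct, computation-light identification with the defining formula and makes the role of the Mittag-Leffler derivatives transparent.
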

\begin{proof}
Denote by $\mathcal{P}^\xi$ a Poisson random measure with the intensity $\xi(\cdot)$, for a measure $\xi (\cdot)$ on $\mathcal{B}(\mathbb{R}^3)$. Use \cite[Corollary 10.1]{kallenberg} to see that $\Pi$ has the same distribution as $\mathcal{P}^\Xi$ where $\Xi$ is independent of $\mathcal{P}^\xi$. Indeed note that, for any positive $f$ continuous with compact support, we have that
    \begin{align}
        E e^{-\Pi f} \, = \, E e^{-\mathcal{P}^\mathcal{L} f} \, = \, \int_0^{+\infty} E e^{-\mathcal{P}^sf} \mathpzc{l}(ds).
    \end{align}
    Observe that $\mathcal{P}^s$ is, for any $s>0$ fixed, a Poisson random measure with intensity $\rho| \cdot |$. Then
    \begin{align}
     E e^{-\Pi f} \, = \, \int_0^{\infty} e^{- l\rho \int_{\mathbb{R}^3}\l 1-e^{-f(s)} \r ds} \mathpzc{l}(d l)\, = \, \mathcal{M}_{\nu}\l -\rho^\nu \int_{\mathbb{R}^3} \l 1-e^{-f(s)} \r ds \r.
    \end{align}
    For $\Pi^\prime$ to be a measure as in Definition \ref{Fractional Poisson field} we can compute the Laplace functional for a simple function $f$ as in \eqref{simplefunction} and then the Cox property follows from monotone and dominated convergence. It follows that
    \begin{align}
        \mathbb{E} \left[ \Pi \mid \Xi \right] \, = \mathbb{E} \left[ \mathcal{P}^\Xi \mid \Xi \right]\, = \, \Xi 
    \end{align}
    where we used the independence of $\Xi$ and $\mathcal{P}^\xi$ together with the freezing Lemma (see, e.g., \cite[Lemma A.3]{schillingBM}).

    Consider now the restriction $\Pi|_B$, we have that, for any $f$ positive, continuous and compactly supported that
    \begin{align}
        E e^{-\Pi|_B f} \, = \, \mathcal{M}_\nu \l -\rho^\nu \l \int_B \l 1-e^{-f(s)} \r ds\r^\nu \r
    \end{align}
    proceeding as in \eqref{simplefunction} and using monotone and dominated convergence.
\end{proof}

\section{Details on the proof of the Boltzmann-Grad limit} \label{Appendix: appendice sulla convergenza dominata}
Following Gallavotti's intuition, the tube-like flow is the union of non-disjoint cylinders, and its volume is given by
\begin{align}
    \left| \Theta\l \underline{\widetilde{\tau}}, \underline{\widetilde{v}} \r \right| = \pi R^2ct+o(R^2). \label{Approssimazione volume del tubo di flusso}
\end{align}

We note that by Equation \eqref{BG limit}  $\rho ^n c^n \pi^n R^{2n} = \lambda^n$. Now, let us call $\Theta := \Theta\l \underline{\widetilde{\tau}}, \underline{\widetilde{v}} \r$ and

\begin{align*}
    a_n(R) :&= \int _{0< \tau _1 < \tau _2 < \dots < \tau_n < t} \int _{(S^2)^{n}}(-\lambda)^n h\l  \Xt(\underline{\widetilde{\tau}}, \underline{\widetilde{v}}),  \Vt(\underline{\widetilde{\tau}}, \underline{\widetilde{v}})\r \mathcal{M}^{(n)}_{\nu}\l- \rho^{\nu} \left| \Theta \right|^{\nu}\r \\
    &\qquad\qquad\qquad\qquad\qquad\qquad \mu(dv_1)\dots \mu(dv_n)\,  d\tau _1 \dots d\tau _n .
\end{align*}

We need it to be absolutely dominated by $b_n$ for each $n\geq 1$, the last one being independent of $R$ and summable.

Firstly, we observe that
\begin{align*}
    \left| h\l  \Xt(\underline{\widetilde{\tau}}, \underline{\widetilde{v}}),  \Vt(\underline{\widetilde{\tau}}, \underline{\widetilde{v}})\r \right| < \zeta_1 \qquad \text{for some } \zeta_1\in (0,\infty). 
\end{align*}
 
Moreover, using the notation for the derivative of the Mittag-Leffler function in Equation \eqref{Derivative of Mittag Leffler function} we get

\begin{align*}
    &\int _{0< \tau _1 < \tau _2 < \dots < \tau_n < t} \int _{(S^2)^{n}} \bigg |\mathcal{M}^{(n)}_{\nu}\l- \rho^{\nu} \left| \Theta  \right|^{\nu}\r \bigg |\, \mu(dv_1)\dots \mu(dv_n)\,  d\tau _1 \dots d\tau _n \\
    &= \int _{0< \tau _1 < \tau _2 < \dots < \tau_n < t} \int _{(S^2)^{n}} \bigg |  \l \frac{d}{d\l\rho |\Theta| \r} \r^n  \int_0^{\infty}   e^{-\rho |\Theta| l}P(\Lam \in dl) \bigg |\, \mu(dv_1)\dots \mu(dv_n)\,  d\tau _1 \dots d\tau _n.
\end{align*}

We now observe that the correction term in the approximation of the volume of the tube-like flow \eqref{Approssimazione volume del tubo di flusso} is negative. Moreover, for $R$ sufficiently small, we can write 

\begin{align*}
    \lambda t + \rho \, o \l R^2 \r \geq \frac{\lambda t}{2} \implies e^{-\l \lambda t + \rho \, o \l R^2\r  \r} \leq e^{-\frac{\lambda t}{2}}.
\end{align*}

Then, by using dominated convergence, the above equation reads
\begin{align*}
    &\int _{0< \tau _1 < \tau _2 < \dots < \tau_n < t} \int _{(S^2)^{n}} \int_0^{\infty}   l^n e^{- l \l \lambda t + \rho \, o \l R^2\r \r} P(\Lam \in dl) \mu(dv_1)\dots \mu(dv_n)\,  d\tau _1 \dots d\tau _n \\
    &\leq \int _{0< \tau _1 < \tau _2 < \dots < \tau_n < t} \int _{(S^2)^{n}}
     \int_0^{\infty}   l^n e^{-l \frac{\lambda t}{2} }  P(\Lam \in dl) \mu(dv_1)\dots \mu(dv_n)\,  d\tau _1 \dots d\tau _n \\
    &= \frac{t^n}{n!}  \int_0^{\infty}   l^n e^{-\frac{\lambda t}{2} l}  P(\Lam \in dl),
\end{align*}

where the inequality holds for sufficiently small $R$.
Finally, 

\begin{align*}
    |a_n(R)| \leq \zeta_1 \frac{1}{2^n}\int_0^{\infty} \frac{\l \lambda l t \r^n}{n!} e^{-\lambda t l} P(\Lam \in dl) =: b_n
\end{align*}
and 
\begin{align*}
    \sum_{n\geq 1}b_n = \sum_{n\geq1} \zeta_1 \frac{1}{2^n}\int_0^{\infty} \frac{\l \lambda l t \r^n}{n!} e^{-\lambda t l} P(\Lam \in dl) &\leq \zeta_1 \int_0^{\infty}   e^{-\lambda t l} \l e^{\lambda tl } - 1  \r P(\Lam \in dl)\\
    &= \zeta_1 \l 1 - \int_0^{\infty} e^{\lambda l t} P\l \Lam \in dl \r \r < \infty.
\end{align*}

\bigskip

{\bf Lorenzo Facciaroni}

Department of Statistical Sciences, Sapienza - University of Rome

Email: \texttt{lorenzo.facciaroni@uniroma1.it}

\bigskip

{\bf Costantino Ricciuti}

Department of Statistical Sciences, Sapienza - University of Rome

Email: \texttt{costantino.ricciuti@uniroma1.it} 

\bigskip

{\bf Enrico Scalas}

Department of Statistical Sciences, Sapienza - University of Rome

Email: \texttt{enrico.scalas@uniroma1.it},

\bigskip

{\bf Bruno Toaldo}

Department of Mathematics “Giuseppe Peano”, University of Turin

Email: \texttt{bruno.toaldo@unito.it}

\end{document}